\documentclass{article}
\usepackage[margin = 1in]{geometry}
\usepackage[utf8]{inputenc}

\usepackage{graphicx}
\usepackage{amssymb,amsmath,mathtools,amscd}
\usepackage[makeroom]{cancel}
\usepackage{amsthm}
\usepackage{mathabx}
\usepackage{epstopdf}
\usepackage{tikz}
\usepackage{tikz-3dplot}
\usetikzlibrary{arrows.meta}
\usetikzlibrary {patterns,patterns.meta}
\usetikzlibrary{shapes.geometric}
\usepackage{indentfirst}
\usepackage{hyperref}
\hypersetup{
	colorlinks=true,
	linkcolor=black,
	filecolor=black,      
	urlcolor=black,
	citecolor=blue,
}

\newcommand{\CC}{\mathbb{C}}
\newcommand{\RR}{\mathbb{R}}

\newcommand{\ZZ}{\mathbb{Z}}
\newcommand{\NN}{\mathbb{N}}
\newcommand{\kk}{\Bbbk}
\newcommand{\KK}{\mathbb{K}}

\newcommand{\bsep}{\beta_{\mathrm{sep}}}
\newcommand{\bfield}{\beta_{\mathrm{field}}}
\newcommand{\bfieldr}{\bfield^\mathbf{r}}

\newcommand{\topdeg}{\operatorname{topdeg}}
\newcommand{\Dspan}{D_\mathrm{span}}
\newcommand{\Dspanl}{D_\mathrm{span}(L)}
\newcommand{\Dreg}{D_\mathrm{reg}}
\newcommand{\Dirr}{D^\otimes_\mathrm{irr}}
\newcommand{\Char}{\operatorname{char}}

\newcommand{\bfx}{\mathbf{x}}
\newcommand{\bfa}{\mathbf{a}}
\newcommand{\bfb}{\mathbf{b}}
\newcommand{\bfc}{\mathbf{c}}
\newcommand{\bfe}{\mathbf{e}}

\newcommand{\bfu}{\mathbf{u}}
\newcommand{\bfv}{\mathbf{v}}
\newcommand{\bfw}{\mathbf{w}}
\newcommand{\bfH}{\mathbf{H}}
\newcommand{\bfE}{\mathbf{E}}
\newcommand{\bfD}{\mathbf{D}}

\newcommand{\LM}{\mathcal{LM}}
\newcommand{\Hom}{\operatorname{Hom}}

\newcommand{\Vol}{\operatorname{Vol}}

\newcommand{\wt}{\operatorname{wt}}
\newcommand{\pighere}{\qedhere}
\newcommand{\Irr}{\operatorname{Irr}}

\newtheorem{theorem}{Theorem}[section]
\newtheorem{lemma}[theorem]{Lemma}

\newtheorem{proposition}[theorem]{Proposition}

\newtheorem{observation}[theorem]{Observation}

\newtheorem*{theorem*}{Theorem}
\newtheorem*{proposition*}{Proposition}
\newtheorem*{conjecture*}{Conjecture}
\newtheorem*{corollary*}{Corollary}

\theoremstyle{definition}
\newtheorem{definition}[theorem]{Definition}
\newtheorem{example}[theorem]{Example}
\newtheorem{remark}[theorem]{Remark}
\newtheorem*{remark*}{Remark}
\newtheorem{notation}[theorem]{Notation}
\newtheorem{convention}[theorem]{Convention}

\makeatletter
\newcommand{\subjclass}[2][1991]{
  \let\@oldtitle\@title
  \gdef\@title{\@oldtitle\footnotetext{#1 \emph{Mathematics Subject Classification.} #2}}
}
\newcommand{\keywords}[1]{%
  \let\@@oldtitle\@title%
  \gdef\@title{\@@oldtitle\footnotetext{\emph{Key words and phrases.} #1.}}%
}
\makeatother

\title{Geometry of numbers and degree bounds for rational invariants}
\author{Ben Blum-Smith, Sylvan Crane, Karla Guzman, Alexis Menenses, \\ and Maxine Song-Hurewitz}
\date{\today}

\subjclass[2020]{Primary 13A50 Secondary 20C15, 20M25, 52C05, 52C07}
\keywords{Invariants, rational invariants, Noether number, degree bound, field generators, lattices, irreducible representations}

\begin{document}

\maketitle

\begin{abstract}
We investigate degree bounds for fields of rational invariants of representations of finite groups. We prove many cases of a bound for $\ZZ/p\ZZ$ conjectured by Blum-Smith, Garcia, Hidalgo, and Rodriguez. For arbitrary groups, we also prove a new bound on the minimum degree $d$ such that the polynomials of degree $\leq d$ span the field of rational functions as a vector space over the invariant field. This latter quantity also bounds the degree $d$ such that the polynomials of degree $\leq d$ contain a copy of the regular representation of $G$, advancing an inquiry of Koll\'ar and Tiep. The methods involve Euclidean lattices and Minkowski's geometry of numbers.
\end{abstract}

\tableofcontents

\section{Introduction}

Invariant rings of finite groups are finitely generated as algebras, and there is a line of research stretching back over 100 years, starting with \cite{noether}, on bounding the degrees of the generators. Fields of rational invariants of finite groups are an even older object of interest, going back at least to \cite{galois1846oeuvres}; and one can recognize elements of the theory even as far back as \cite{de1770reflexions}, although it was not conceived of in that way at the time. Nonetheless, the investigation of degree bounds in this latter context seems to be comparatively young; see \cite{fleischmann2007homomorphisms, hubert-labahn, first-beam-paper, blum2024degree, edidin2025generic, edidin2025orbit, reimers2025generic, bbs-derksen}. New impetus was given to this investigation by a recent development in signal processing: when estimating a signal that has been corrupted not only by noise but by transformations randomly drawn from a group, degree bounds on generators for the field of rational invariants yield information-theoretic bounds on the number of samples one needs to view in order to accurately estimate (the orbit of) a generic signal \cite[Theorems~2.15 and 2.16 and  Corollary~3.31]{bandeira2017estimation}.

The present work is a contribution to this line of research. We make significant progress toward an upper bound for the case $G=\ZZ/p\ZZ$ ($p$ prime) that was conjectured in \cite{first-beam-paper}. We also give a bound on the degrees of generators for the ambient rational function field as a vector space over the invariant field. In this introduction, we identify the objects of study, state our main results, and discuss motivation, context, and methods.

\begin{paragraph}{Objects of study.}
Let $G$ be a finite group, $\kk$ a field, and $V$ a finite-dimensional representation of $G$ over $\kk$. As usual, let $\kk[V]^G$ and $\kk(V)^G$ denote the ring of invariants and the field of rational invariants for the action of $G$ on $V$. Our objects of study are the following three natural numbers:
\begin{equation}\label{eq:bfield}
    \bfield(G,V) := \min(d:\kk(V)^G\text{ is generated  by invariant polynomials of degree}\leq d),
\end{equation}
\begin{equation}\label{eq:bfieldr}
    \bfieldr(G,V) := \min(d:\kk(V)^G\text{ is generated by rational invariants $f_1/f_2$ with }\deg f_1 + \deg f_2\leq d),
\end{equation}
and
\begin{equation}\label{eq:Dspan}
    \Dspan(G,V) := \min(d:\kk(V)\text{ is spanned over $\kk(V)^G$ by polynomials of degree}\leq d),
\end{equation} 
When they are clear from context, we usually drop the $(G,V)$ from the notation, writing $\bfield$, $\bfieldr$, and $\Dspan$.
\end{paragraph}

\begin{paragraph}{Motivation and context.}
The quantities $\bfield$ and $\bfieldr$ are field analogues to the classical Noether number $\beta(G,V)$, the minimum $d$ such that the invariant ring $\kk[V]^G$ is generated (as a $\kk$-algebra) by the invariant polynomials of degree $\leq d$. Thus they are natural objects of study in view of the vast literature on $\beta(G,V)$---a far-from-comprehensive list of highlights includes \cite{noether, schmid1991finite, domokos-hegedus, sezer2002sharpening, fleischmann2000noether, fogarty2001noether, fleischmann2006noethermodular, symonds2011castelnuovo, cziszter-domokos, cziszter2014noether, cziszter2016interplay, gandini2019ideals, hegedHus2019finite, cziszter2019lower, ferraro2021noether, kemper2025schemes}---and the classical interest in fields of rational invariants  \cite{galois1846oeuvres, jordan1870traite, burnside1911theory, noether1913rationale}. 

They may also be viewed as analogues to  $\bsep(G,V)$, the minimum $d$ such that the polynomial invariants of degree $\leq d$ separate the orbits of $G$ on $V$, which is also the subject of a large and active  literature \cite{derksen-kemper, domokos2007typical, draisma2008polarization, kemper2009separating, sezer2009constructing, dufresne2009separating, dufresne2009cohen, kohls-kraft, domokos2011helly, dufresne2013finite, kohls2013separating,  dufresne2015separating, domokos, reimers2018separating, reimers2020separating, lopatin2021separating, domokos2022separating, kemper2022separating, domokos2024separating, schefler2025separating, 
domokos2025separating, domokosschefler2025, jia2025modular}. If $\kk$ is algebraically closed of characteristic zero, then generators for the field of rational invariants separate {\em generic} orbits \cite[Theorem~2]{rosenlicht1956some}, so $\bfield,\bfieldr$ are relaxations of $\bsep$ in this case (and $\beta$ always).

The number $\bfield$ was defined (with slightly different notation) in \cite{fleischmann2007homomorphisms}, where it was proven that $\bfield(G,V) \leq |G|$ regardless of field characteristic. It was further studied in \cite{hubert-labahn, first-beam-paper, edidin2025generic, edidin2025orbit, bbs-derksen} (and see also \cite{reimers2025generic}). In  the case $\kk=\RR$, it is the quantity of interest in the signal processing application mentioned above. Various forms of this signal processing problem (with a finite or more generally a compact $G$) are studied in  \cite{sigworth2016principles, abbe2017sample, singer2018mathematics, perry2019sample, bandeira2020optimal,   bendory2020single, fan2021maximum, bendory2022sparse, abas2022generalized,  bendory2022dihedral, bandeira2017estimation, edidin2024orbit, edidin2025orbit, bendory2025generalized}; see  \cite[Sections~2 and 3]{bandeira2017estimation} for the connection to $\bfield$. A much more detailed discussion of motivation and context for $\bfield$ is found in the introduction to \cite{first-beam-paper}.

The alternative $\bfieldr$ was studied in \cite{blum2024degree},  motivated by the intuition that because $\kk(V)^G$ is a field, it makes sense to consider rational but not necessarily polynomial generators. Trivially, we have $\bfieldr \leq \bfield$, so $\bfieldr$ can be viewed as a relaxation of $\bfield$. There is  equality under certain conditions, e.g., \cite[Proposition~3.1]{blum2024degree}, and, at least for abelian $G$ in coprime characteristic, the same lower bounds hold (and are sharp) for both; compare \cite[Theorem~3.1]{first-beam-paper} with \cite[Theorem~4.1]{blum2024degree}. The state of current knowledge is more preliminary with respect to upper bounds, but at least in the case $G=\ZZ/p\ZZ$, the same upper bounds are conjectured; see below.

The quantity $\Dspan$ is a field analogue to another quantity of longstanding interest in invariant theory: the minimum degree required for not-necessarily-invariant polynomials  to generate $\kk[V]$ as a module over $\kk[V]^G$, denoted $\operatorname{topdeg}(G,V)$ or $\beta(\kk[V],\kk[V]^G)$, which was studied in \cite{schmid1991finite, kohls2014top, cziszter2019lower} (and see also \cite{cziszter2013generalized, cziszter-domokos, cziszter2014noether}, which work with a common generalization of $\beta(G,V)$ and $\operatorname{topdeg}(G,V)$). One has immediately that
\[
\Dspan(G,V) \leq \topdeg(G,V),
\]
since module generators for $\kk[V]$ over $\kk[V]^G$ also span $\kk(V)$ over $\kk(V)^G$. The investigation of $\Dspan$ was suggested by Victor Reiner \cite[Question~5.11]{first-beam-paper}, motivated by the analogy with $\topdeg$. We have two additional motivations for its study:

Two quantities of interest in representation theory are:
\begin{itemize}
    \item the minimum $d$ such that the tensor powers of $V$ up to the $d$th have every irreducible representation of $G$ as a constituent (called $\Dirr$ in \cite{bbs-derksen}; see also \cite{brauer1964note, wehlau2003some, steinberg2014burnside}), and 
    \item the minimum $d$ such that $\kk[V]_{\leq d}$ contains a copy of the regular representation of $G$ (called $\Dreg$ in \cite{bbs-derksen}; see also \cite{kollar2024simple}).
\end{itemize} 
The quantity $\Dreg$ was studied recently by Koll\'ar and Tiep \cite{kollar2024simple}, who proved that $\Dreg \leq |G|-1$. In general, $\Dspan$ upper-bounds both $\Dirr$ and $\Dreg$, and in the situation of Sections~\ref{sec:preliminaries} and \ref{sec:bfieldr} of the present paper, i.e., abelian $G$ and coprime-characteristic $\kk$, all three numbers are {\em equal} \cite[Proposition~4.1]{bbs-derksen}. Thus, by studying $\Dspan$ we also study these quantities. A more detailed discussion of the motivation for $\Dspan$ coming from representation and invariant theory is found in the introduction to \cite{bbs-derksen}.

A final motivation for studying $\Dspan$ is that it directly bounds $\bfield$. It was shown very recently \cite[Theorem~1.1]{bbs-derksen} that $\Dspan$ and $\bfield$ satisfy the inequality
\[
\bfield \leq 2\Dspan + 1;
\]
thus, advancing our understanding of $\Dspan$ may ultimately help get a hold of $\bfield$. (That said, this hope is not yet realized in the present work: the result on $\Dspan$ proven below yields no new information about $\bfield$.)

\end{paragraph}

\begin{paragraph}{Prior state of the art; main results.}
As throughout, $G$ is a finite group and $V$ a representation of $G$ over a field $\kk$, of finite dimension $N$. The following is a survey of results on $\bfield$, $\bfieldr$, and $\Dspan$ that appear in the existing literature:
\begin{itemize}
    \item $\bfield \leq |G|$ with no additional hypotheses \cite[Corollary~2.3]{fleischmann2007homomorphisms}. (In particular, this inequality holds even if $\Char \kk$ divides $|G|$, providing a contrast with the classical Noether number $\beta(G,V)$ which is always $\leq |G|$ when $\Char \kk\nmid |G|$ \cite{noether, fleischmann2000noether, fogarty2001noether}, but not otherwise \cite{richman1996invariants}.)
    \item $\Dspan \leq |G|-1$ with no aditional hypotheses \cite{bbs-derksen}. (This is parallel to the previous bullet. The ``classical" analogue to $\Dspan$ is $\operatorname{topdeg}(G,V)$, defined above, which satisfies $\operatorname{topdeg}(G,V)\leq |G|-1$ when $\Char \kk \nmid |G|$, but not otherwise \cite{kohls2014top}.) 
    
    \item $\bfield \geq \sqrt[N]{|G|}$ with no additional hypotheses \cite[Theorem~3.2]{first-beam-paper}.

    \item If $G$ is abelian and $\Char\kk$ does not divide $|G|$, let $m$ be the number of nontrivial isotypic components in $V$'s base change to the algebraic closure $\overline \kk$. (Note that $m\leq N$.) Then $\bfield \geq \sqrt[m]{|G|}$ \cite[Theorem~3.1]{first-beam-paper}, and the same bound even holds for $\bfieldr$ \cite[Theorem~4.1]{blum2024degree}.

    \item Under the same hypotheses as the previous bullet, $\bfield \geq 3$ unless $G$ is an elementary abelian 2-group \cite[Proposition~3.5]{first-beam-paper}; the same bound holds for $\bfieldr$ \cite[Proposition~4.3]{blum2024degree}. If $V$ is the regular representation of $G$, the same assertion for $\bfield$ holds in characteristic zero without the abelian assumption \cite[Section~4.3.3]{bandeira2017estimation}. (The same inequality likely holds more generally.)
    
    \item If the characteristic of $\kk$ is zero and $V$ contains the regular representation of $G$, then $\bfield \leq 3$ \cite{edidin2025orbit}.
    
    \item As mentioned above, if the characteristic of $\kk$ is coprime with $|G|$, then $\bfield \leq 2\Dspan + 1$ \cite{bbs-derksen}. (This generalizes the previous bullet, whose hypothesis on $V$ implies that $\Dspan = 1$.)
    
    \item Suppose $G=\ZZ/p\ZZ$, $p$ prime, and $\Char \kk$ is different from $p$. With $m$ as above, suppose $m\geq 3$. Then $\bfield \leq (p+3)/2$ \cite[Theorem~3.11]{first-beam-paper}.
\end{itemize}

For non-modular representations of $G=\ZZ/p\ZZ$ with $m=2$ that satisfy a certain easily-checked condition, one also has an explicit formula for $\bfield$ \cite[Proposition~4.2]{first-beam-paper}. Furthermore, for certain specific representations of various important groups, $\bfield$, $\bfieldr$, and/or $\Dspan$ are also known, or easily deduced, from known explicit information about the invariant ring and the representation theory. 

The upper bound in the last bullet above is not sharp. Based on computational data, the following is expected. If correct, it is sharp even for $\bfieldr$ \cite[Proposition~4.5]{blum2024degree}.

\begin{conjecture*}[Conjecture~5.1 in \cite{first-beam-paper}]\label{conj:the-conjecture}
    Let $G=\ZZ/p\ZZ$ with $p$ an odd prime, $\kk$ a field of characteristic different from $p$, and $m$ the number of distinct, nontrivial isotypic components in $V$'s base change to an algebraic closure $\overline\kk$. Then
    \[
    \bfield \leq \left\lceil\frac{p}{\lceil m/2\rceil}\right\rceil.
    \]
\end{conjecture*}

A special case is known: if $V$ is the regular representation of $\ZZ/p\ZZ$, then the bound $\bfield \leq 3$ proven in \cite{edidin2025orbit} matches the conjectured bound. Our first main result is that, for fixed $m$, the conjectured bound holds for all but finitely many $p$ provided that the set of characters appearing in $V$ is closed under inversion:

\begin{theorem*}[Theorem~\ref{thm:real-reps-bfield} below]
    Let $G=\ZZ/p\ZZ$ with $p$ an odd prime, $\kk$ a field of characteristic different from $p$, and $m$ the number of distinct, nontrivial isotypic components in $V$'s base change to an algebraic closure $\overline \kk$. Suppose that for any nontrivial character appearing in $V_{\overline \kk}$, the inverse character also appears (note that this implies  $m$ is even). Then, for fixed $m$ and all sufficiently high $p$, we have
    \[
    \bfield \leq \lceil 2p/m\rceil.
    \]
\end{theorem*}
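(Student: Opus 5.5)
Our plan is to reduce to a lattice problem and then use Minkowski's theorem. After base change to $\overline\kk$ (generators of the invariant field descend, since a degree bound over $\overline\kk$ gives one over $\kk$ by the standard Galois-descent argument from \cite{first-beam-paper}), we may assume $V$ is diagonal. Its nontrivial characters are $\chi^{\pm a_1},\dots,\chi^{\pm a_k}$ with $k=m/2$, and coordinates $x_i, y_i$ have weights $a_i,-a_i$. We may also discard multiplicities: extra copies of a character are handled by degree-$2$ invariants $x_i'/x_i$. Invariant monomials $\prod x_i^{u_i}y_i^{v_i}$ correspond to nonnegative vectors $(u,v)$ with $\sum a_i(u_i-v_i)\equiv 0 \pmod p$. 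The field $\kk(V)^G$ is generated by Laurent monomials of the sublattice
\[
L=\{w\in\ZZ^{m} : \textstyle\sum_i a_i(w_i-w_{i+k})\equiv 0 \pmod p\},
\]
which has index $p$ in $\ZZ^m$. It therefore suffices, as in the paper's earlier sections, to show that $L$ is generated by differences of exponent vectors of invariant monomials of degree $\le d:=\lceil 2p/m\rceil$. The degree-$2$ invariants $x_iy_i$ immediately give the vectors $e_i+e_{i+k}$. Modulo these, the problem becomes generating the $k$-dimensional lattice
\[
\Lambda=\{z\in\ZZ^k:\textstyle\sum a_iz_i\equiv 0 \pmod p\}
\]
using vectors $z$ with $\|z\|_1\le d$. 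Here a vector $z$ comes from the invariant monomial $\prod x_i^{z_i^+}y_i^{z_i^-}$, whose degree is $\|z\|_1$.

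The key step is to show that $\Lambda$ (covolume $p$ in $\RR^k$) is generated by vectors of $\ell^1$-norm at most $\lceil 2p/m\rceil=\lceil p/k\rceil$. We will use Minkowski's second theorem for the cross-polytope $B=\{\|z\|_1\le 1\}$, whose volume is $2^k/k!$. This gives $\lambda_1\cdots\lambda_k\le 2^k p/\Vol(B)= k!\,p$. We will also need a lower bound on each $\lambda_i$. Since $p$ is prime and $a_i\not\equiv 0$, any nonzero $z\in\Lambda$ has at least two nonzero coordinates. More to the point, for fixed $k$ and large $p$, vectors of bounded norm are sparse in $\Lambda$. We want to conclude that $\lambda_k\le p/k$ roughly, and then pass from successive minima to an actual basis. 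For the second passage we use the standard fact that $\Lambda$ is generated by vectors of norm at most $\max(\lambda_k, \tfrac{k}{2}\lambda_k)$, or more precisely that $\Lambda$ has a basis with norms bounded by $\max(1,k/2)\lambda_k$.

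The main obstacle is that this naive chain loses constant factors depending on $k$. We will also need a sharper argument that the last minimum is at most $\lceil p/k\rceil$ for large $p$. Our plan is to exploit the structure of $\Lambda$. It contains $p e_i$, and it contains $e_j - c_j e_1$ where $c_j\equiv a_j a_1^{-1}$, and these generate $\Lambda$. We will then reduce the generators using a pigeonhole/Dirichlet argument: for each $j$, choose a multiplier $t$ with $|t|\le p/k$ such that $t\,c$ has all residues small, so that short vectors in each coordinate direction exist. Then, because the first minima of $\Lambda$ are large (at least about $p^{1/k}$), we will argue that the product bound forces $\lambda_k\le p/\big(\lambda_1\cdots\lambda_{k-1}/(k!)\big)$, which is $o(p)$ unless the lattice is degenerate. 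When the lattice is degenerate, meaning it contains a very short vector $z$ of norm $O(1)$, we pass to the quotient by $z$ and induct on $k$. The constant $\lceil 2p/m\rceil$ arises in the base case $k=1$, where $\Lambda=p\ZZ$ plus the vectors $e_i+e_{i+k}$. It also arises when all $a_i$ coincide up to sign with $k$ small, so the bulk of the work lies in this induction and in the case analysis of the degenerate configurations. "Sufficiently large $p$" enters exactly in absorbing the $k$-dependent constants into $o(p)$.
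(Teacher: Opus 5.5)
Your reduction is sound: $L=\pi^{-1}(\Lambda)$ for $\pi(w)=(w_i-w_{i+k})_i$, $\ker\pi$ is spanned by the degree-$2$ vectors $\bfe_i+\bfe_{i+k}$, and $z\mapsto(z^+,z^-)$ lifts each $z\in\Lambda$ to a nonnegative vector of the same $\ell^1$-norm. This is the same device the paper uses with the $\bfu_{ij}$ to push generators into the nonnegative orthant.

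The gap is the key step, which is never proved: that $\Lambda$ is generated by vectors of $\ell^1$-norm at most $\lceil p/k\rceil$. This bound is exactly tight, so no constant can be lost. Take $a_j=j$. The integer relations $\sum_j jz_j=0$ give $\lambda_1,\dots,\lambda_{k-1}\le 3$. Every $z\in\Lambda$ off that hyperplane satisfies $k\|z\|_1\ge|\sum_j jz_j|\ge p$, so $\lambda_k=\lceil p/k\rceil$ exactly. This example refutes your claim that the first minima are at least about $p^{1/k}$. It also shows that the ``degenerate'' case is precisely where the theorem is sharp, so neither Mahler's $\max(1,k/2)\lambda_k$ nor any $o(p)$ argument can settle it.

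The proposed induction through $\Lambda/\ZZ z$ fails for three reasons:
\begin{itemize}
\item The target $\lceil p/k\rceil$ gets \emph{worse} as $k$ decreases; the base case $k=1$ gives $p$.
\item The quotient is not a congruence sublattice of $\ZZ^{k-1}$ carrying the $\ell^1$-norm.
\item Lifting back costs an additive $O(\|z\|_1)$, which is fatal when the bound is tight.
\end{itemize}
Also, after discarding duplicates no two $a_i$ can coincide up to sign, so that case does not occur. The Dirichlet step is too vague to assess.

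What is missing is a way to control the \emph{last} basis vector exactly, which is the paper's key idea (Lemmas~\ref{lem:largest_coordinate_bound} and~\ref{lem:point_in_L}).
\begin{itemize}
\item Use Minkowski only for the first $k-1$ vectors. From $\lambda_{k-1}^2\le\lambda_1\cdots\lambda_k\le k!\,p$ and Mahler you get a basis $\bfb_1,\dots,\bfb_k$ with $\|\bfb_i\|_1=O(\sqrt p)$ for $i<k$.
\item The integer linear form $\bfw\mapsto\det(\bfb_1,\dots,\bfb_{k-1},\bfw)=\sum_jD_jw_j$ vanishes mod $p$ on $\Lambda$. Since $p$ is prime, the congruence $\sum_jD_jw_j\equiv0\pmod p$ defines either $\Lambda$ or $\ZZ^k$.
\item In the first case the $D_j$ are a unit multiple of your $a_j$ mod $p$. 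Hence $|D_1|,\dots,|D_k|$ are distinct positive integers, so some $|D^*|\ge k$, with equality only if $\{|D_j|\}=\{1,\dots,k\}$. In the second case some $|D_j|\ge p$.
\item Tile the affine hyperplane $\{\det=p\}$ by lattice translates of the parallelotope on $\bfb_1,\dots,\bfb_{k-1}$. The tile containing $(p/D^*)\bfe_j$ yields a vector completing the basis, with norm at most $p/|D^*|+O(\sqrt p)$.
\item This is below $\lceil p/k\rceil$ for large $p$ unless $\Lambda$ is the explicit lattice $\sum_j\pm jz_j\equiv0$. That case must be handled by direct computation, as in Proposition~\ref{prop:sharp-case}.
\end{itemize}
The paper runs this argument on the $m$-dimensional $L$ in Theorem~\ref{thm:gen-deg-lattice} and then pushes the generators into the orthant. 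It works equally well on your $\Lambda$.
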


The hypothesis on $V$ in this theorem is automatically satisfied if $\kk=\RR$, so in particular, the conjectured bound holds for all but finitely many $p$ given fixed $m$ in the situation of relevance to the motivating signal processing application. The theorem is deduced as a corollary of our next main result, which allows to drop the extra hypothesis on $V$ at the price of relaxing $\bfield$ to $\bfieldr$:

\begin{theorem*}[Theorem~\ref{thm:gen-deg} below, first statement]
    Let $G=\ZZ/p\ZZ$, $p$ an odd prime. Let $\kk$ be a field containing distinct $p$th roots of unity. Let $V$ be a finite-dimensional representation of $G$ over $\kk$, with $m$ distinct, nontrivial isotypic components.

    For fixed $m$ and all sufficiently high $p$, we have
    \[
    \bfieldr \leq \left\lceil\frac{p}{\lceil m/2\rceil}\right\rceil.
    \]
\end{theorem*}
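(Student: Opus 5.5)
The plan is to translate the statement into the geometry of numbers. Since $\kk$ contains distinct $p$th roots of unity, we diagonalize $V$ so that $G=\ZZ/p\ZZ$ acts on coordinates $x_1,\dots,x_N$ by characters. Trivial-character coordinates are invariant of degree $1$ and can be ignored. Coordinates sharing a character $\chi_a$ contribute ratios $x_i/x_j$, which are degree-$2$ rational invariants. We may therefore reduce to one representative variable $x_i$ for each of the $m$ distinct nontrivial characters $a_1,\dots,a_m\in(\ZZ/p\ZZ)^\times$.

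Invariant Laurent monomials $x^{\bfe}$, $\bfe\in\ZZ^m$, correspond to the lattice
\[
L=\{\bfe\in\ZZ^m : \textstyle\sum_i a_i e_i\equiv 0 \pmod p\},
\]
which has index $p$ in $\ZZ^m$. The field $\kk(x_1,\dots,x_m)^G$ is generated by the monomials $x^{\bfe}$ with $\bfe\in L$; this is standard for diagonal actions, since the invariant field of a torus-type diagonal action is spanned by invariant monomials. A rational invariant $x^{\bfe^+}/x^{\bfe^-}$ has $\deg f_1+\deg f_2=\|\bfe\|_1$. Hence $\bfieldr$ is at most the least $d$ such that the lattice vectors of $\ell^1$-norm $\le d$ generate $L$ as a group. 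I will also verify the reverse reduction for multiple variables per character, checking that the degree-2 ratios are harmless because the target bound is at least $2$ once $m\le p$.

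The core is therefore the following claim: for fixed $m$ and $p$ large, $L$ is generated by vectors of $\ell^1$-norm at most $d:=\lceil p/\lceil m/2\rceil\rceil$. I would first invoke Minkowski's second theorem for the cross-polytope $B=\{\|\bfe\|_1\le1\}$, which has volume $2^m/m!$, and $\det L=p$. This gives successive minima satisfying
\[
\lambda_1\cdots\lambda_m\le \frac{2^m}{\Vol(B)}\,p = m!\,p.
\]
On its own this is not enough. It bounds the product, whereas we need $\lambda_m\le d$, together with generation rather than mere linear independence.

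The key structural input is that $L$ contains the easy vectors $p\bfe_i$ and, for each pair $i\neq j$, the vector $\bfe_i - c\,\bfe_j$ with $c\equiv a_i a_j^{-1}$, where $c$ may be chosen in $(-p/2,p/2)$. More usefully, $L$ contains vectors supported on any chosen set $S$ of coordinates. The sublattice $L_S=L\cap\ZZ^S$ still has index $p$ in $\ZZ^S$, so Minkowski (first theorem) in dimension $k=|S|$ gives a vector of $\ell^1$-norm $\lesssim (k!\,p)^{1/k}$, which is $o(p/\lceil m/2\rceil)$ for $k\ge2$ as $p\to\infty$.

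The plan is to build a generating set as follows. First, produce short vectors supported on small coordinate sets, each of norm $O(\sqrt{p})$, that span a sublattice $M\subseteq L$ with $\ZZ^m/M$ cyclic of small index structure. Then finish with one or a few vectors whose norm is governed by $d$. The extremal examples from \cite[Proposition~4.5]{blum2024degree} suggest that the bottleneck vector arises from pairing coordinates: there are $\lceil m/2\rceil$ pairs, and a single relation such as $\sum_i b_i e_i$ spreads weight $p$ across them, giving norm about $p/\lceil m/2\rceil$. Concretely, I would show that $L$ modulo the span of short vectors is cyclic of order $p$ in a suitable quotient coordinate. I would then find a coset representative of $\ell^1$-norm $\le d$ by an averaging or pigeonhole argument over the $\lceil m/2\rceil$ pairs.

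I expect the main obstacle to be this last step: ensuring generation of the full lattice, not just a finite-index sublattice, with the sharp constant $p/\lceil m/2\rceil$ rather than a Minkowski-type bound. The asymptotic hypothesis ``$p$ sufficiently large'' is used exactly to absorb the $O(\sqrt p)$ lower-order vectors beneath $d$.
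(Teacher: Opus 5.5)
Your reduction to the index-$p$ lattice $L$ and your use of Minkowski's second theorem agree with the paper. But the proposal stops exactly where the proof has to happen. You say you ``would show'' that a completing vector of norm $\le d$ exists ``by an averaging or pigeonhole argument over the $\lceil m/2\rceil$ pairs,'' but no such argument is given, and it is not the right mechanism. There is also an earlier gap. Short vectors found by Minkowski's first theorem on coordinate subsets need not span a saturated sublattice of $L$, and in that case no single extra vector completes them to a generating set. The paper avoids this with Mahler's theorem: $L$ has a basis $\bfb_1,\dots,\bfb_m$ with $\|\bfb_i\|_1\le i\lambda_i$. Since $\lambda_{m-1}^2\le\lambda_1\cdots\lambda_m\le m!\,p$ (using $\lambda_i\ge 1$), the vectors $\bfb_1,\dots,\bfb_{m-1}$ have norm $O(\sqrt p)$ and are automatically part of a basis.

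The missing key idea is the integer linear form $\ell(\bfw)=\det(\bfb_1,\dots,\bfb_{m-1},\bfw)=\sum_i D_iw_i$.
\begin{itemize}
\item It vanishes mod $p$ on $L$, so $\{\ell\equiv 0\pmod p\}$ is a lattice lying between $L$ and $\ZZ^m$. Since $p$ is prime, there are two cases. Either this lattice equals $L$: then, because $L$ contains no $\bfe_i$ and no $\bfe_i-\bfe_j$, the $D_i$ are $m$ distinct nonzero integers, so $\max_i|D_i|\ge\lceil m/2\rceil$. Or all $D_i\equiv 0\pmod p$, and then $\max_i|D_i|\ge p$.
\item Every $\bfw\in L$ with $\ell(\bfw)=p$ completes $\bfb_1,\dots,\bfb_{m-1}$ to a basis of $L$. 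These points tile the hyperplane $\ell=p$ by translates of the parallelotope spanned by $\bfb_1,\dots,\bfb_{m-1}$. The tile containing the axis point $(p/D_j)\bfe_j$, with $|D_j|$ maximal, yields such a $\bfw$ with $\|\bfw\|_1\le p/|D_j|+O(\sqrt p)$.
\end{itemize}
So the constant $\lceil m/2\rceil$ comes from pigeonhole on the distinct coefficients of this determinant form. What matters is the maximum $|D_j|$, because the $\ell^1$-closest point of a hyperplane to the origin lies on an axis; an average over pairs plays no role.

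Finally, your plan has no provision for the boundary case $\max_i|D_i|=\lceil m/2\rceil$. There the $O(\sqrt p)$ error cannot be absorbed into $\lceil p/\lceil m/2\rceil\rceil$. The paper observes that this case forces the coefficient set to be $\{\pm1,\dots,\pm\lceil m/2\rceil\}$ (minus one element if $m$ is odd), and then uses the exact computation of Proposition~\ref{prop:sharp-case}. When $\max_i|D_i|\ge\lceil m/2\rceil+1$, the difference $p/\lceil m/2\rceil-p/(\lceil m/2\rceil+1)$ is a fixed fraction of $p$ and does absorb the error.
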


The sharpness of the bound in Conjecture~\ref{conj:the-conjecture} (if it holds) is known by an explicit construction of representations that attain the bound (see Proposition~\ref{prop:sharp-case} below). It was asked in \cite[Question~5.4]{first-beam-paper} whether, for fixed $m$ and sufficiently high $p$, these are the only ones that attain it. The second statement in Theorem~\ref{thm:gen-deg} below shows that this holds if we replace $\bfield$ with $\bfieldr$.

Our final main result improves on the above general bound $\Dspan \leq |G|-1$ in the situation that the field characteristic is coprime, as long as $V$ is not a very special representation of a cyclic group:

\begin{theorem*}[Theorem~\ref{thm:Dspan-main} below]
    Let $G$ be a finite group, and $\kk$ a field of characteristic coprime with $|G|$. Let $V$ be a faithful, finite-dimensional representation of $G$ over $\kk$ that is not scalar or a sum of a scalar and a trivial representation. Then
    \[
    \Dspan \leq |G|/2.
    \]
\end{theorem*}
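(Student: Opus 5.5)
The plan is to turn the statement into a dimension-growth problem for a product of subspaces of the field extension $\kk(V)/\kk(V)^G$, and then to use a linear analogue of Kneser's theorem to make the dimension grow by at least $2$ per degree.

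\textbf{Setup.} Write $K=\kk(V)^G$ and $L=\kk(V)$. Since $G$ acts faithfully, $L/K$ is Galois with group $G$ and $[L:K]=|G|$. We may first extend scalars to assume $\kk$ is algebraically closed; the required compatibility of $\Dspan$ with base change is standard in coprime characteristic, as in \cite{bbs-derksen}. Let $L_d$ denote the $K$-span of the polynomials of degree $\leq d$. Because $\kk[V]_{\leq d+1}=\kk[V]_{\leq 1}\cdot \kk[V]_{\leq d}$, we have $L_d=L_1^d$, the $d$-fold product of $K$-subspaces. We need $\dim_K L_{|G|/2}=|G|$.

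\textbf{Step 1: $\dim_K L_1\geq 3$.} This is where the hypothesis on $V$ enters. Decompose $V^*$ into characters of a diagonalizing basis for an abelian subgroup, or argue directly with linear forms. Suppose the $K$-span of $V^*$ were $1$-dimensional. Then every ratio $x_i/x_j$ of linear forms would be $G$-invariant, so $G$ would act on $V^*$ through scalars. Suppose instead that $1$ lay in $K+KV^*$ in a degenerate way, with the span of $\{1\}\cup V^*$ only $2$-dimensional. Then $V^*$ would be a scalar piece plus invariant linear forms, which is exactly the excluded case "scalar $\oplus$ trivial". Hence $1$ together with two $K$-independent linear forms gives $\dim_K L_1\geq 3$. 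I will check the details by comparing the weights of $1,x,y$ under the group action: elements of distinct isotypic type, or non-proportional forms, are $K$-independent by Galois/Dedekind independence.

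\textbf{Step 2: Kneser-type growth.} I would invoke the linear Kneser theorem for separable (in particular Galois) extensions, due to Hou--Leung--Xiang and Bachoc--Serra--Zémor. For $K$-subspaces $A,B\subseteq L$,
\[
\dim_K AB\geq \dim_K A+\dim_K B-\dim_K H(AB),
\]
where $H(AB)=\{x\in L: xAB\subseteq AB\}$ is the stabilizer, an intermediate field. Let $H=H(L_d)$ be the stabilizer of $L_d$; note that it is also contained in the stabilizer of $L_e$ for every $e\geq d$. If $H=K$, iterating gives
\[
\dim L_d\geq d(\dim L_1-1)+1\geq 2d+1.
\]
Then $\dim L_{|G|/2}\geq |G|+1>|G|$, which is impossible unless $L_{|G|/2}=L$ already at an earlier stage. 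Either way the bound follows.

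\textbf{Step 3: nontrivial stabilizer (main obstacle).} If $H\ne K$, then $H=L^N$ for a proper subgroup $N<G$, and $L_d$ is an $H$-vector space. I would pass to the quotient: work over $H$, replacing $L_1$ by $HL_1$, and apply the Kneser bound relative to $H$, where $[L:H]=|N|$. One has $L_d\supseteq H$, and $H$ has $K$-dimension $[G:N]$. The goal is to show that the number of remaining steps needed is at most $|N|/2$ measured in $H$-dimension, so that the total stays $\leq |G|/2$. The delicate point is that $HL_1$ might behave like a scalar representation of $N$; this corresponds to the restriction of $V$ to $N$ being scalar or scalar plus trivial. That case needs a separate count, using that $H$ itself is reached within roughly $[G:N]/2$ steps, or else a direct computation in the cyclic case. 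I expect this bookkeeping, and especially ruling out bad stabilizers at small $d$, to be the hardest part. The first thing I would try is an induction on $|G|$, applied to $N$ acting on $V$.
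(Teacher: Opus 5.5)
Your Step~1 conclusion is correct; cleanly, if $\dim_K L_1=2$, write $L_1=K\oplus Ku$ with $u$ a $\chi$-eigenvector and project each linear form onto its $G$-fixed and $\chi$-isotypic parts to get $V^*=(V^*)^G\oplus(V^*)_\chi$. The argument breaks at the end of Step~2. The inequality $\dim_K L_{|G|/2}\ge |G|+1$ was derived under the assumption $H(L_{|G|/2})=K$. So the contradiction only shows that the stabilizer has become nontrivial by degree $|G|/2$. That is perfectly compatible with $L_{|G|/2}\ne L$, so ``either way the bound follows'' is a non sequitur, and the whole proof rests on Step~3, which you leave open.

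The route you sketch for Step~3 does not close either. Passing to $H=L^N$ and inducting on $N$ gives at best $\Dspan(G,V)\le d_1+\Dspan(N,V)$, where $d_1$ is the first degree with nontrivial stabilizer. Nothing in your argument bounds $d_1$ by $(|G|-|N|)/2$: the estimate $\dim_K L_d\ge 2d+1$ for $d<d_1$ only gives $d_1\le |G|/2$. Moreover $V|_N$ may be scalar, in which case $\Dspan(N,V)=|N|-1$. The underlying problem is that the degree-by-degree inequality $\dim L_{d+1}\ge \dim L_d+\dim L_1-\dim_K H(L_{d+1})$ loses the full $\dim_K H$ at every step.

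The missing idea is to apply Kneser once, to the whole $k$-fold product, with $H:=H(L_k)$. Iterating the linear Kneser theorem in the standard way gives
\[
\dim_K L_k\;\ge\; k\dim_K(HL_1)-(k-1)\dim_K H .
\]
To see this, replace every factor by $HL_1$. Each partial product is then an $H$-space whose stabilizer lies between $H$ and $H(L_k)=H$, so induction on the number of factors applies.

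Now suppose $L_k\neq L$. Since $1\in L_k$, we have $H\subseteq L_k\subsetneq L$. Hence $c:=\dim_H(HL_1)\ge 2$: otherwise $HL_1=H$, so $H$ contains every $x_i$ and therefore contains $\kk(x_1,\dots,x_N)=L$. This gives $\dim_K L_k\ge (k(c-1)+1)\dim_K H$.
\begin{itemize}
\item If $H=K$, then $c=\dim_K L_1\ge 3$ by Step~1, so $\dim_K L_k\ge 2k+1$.
\item If $H\ne K$, then $\dim_K H\ge 2$ and $c\ge 2$, so $\dim_K L_k\ge 2(k+1)$.
\end{itemize}
Thus $L_k\neq L$ forces $2k+1\le |G|-1$, whence $\Dspan(G,V)\le \lfloor |G|/2\rfloor$. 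No induction on $|G|$ and no special treatment of scalar restrictions is needed.

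Completed this way, your argument is a genuinely different proof from the paper's. The paper splits into cases: groups that are neither cyclic nor dicyclic via known Noether-number and $\topdeg$ bounds, the cyclic case via the planar lattice analysis of Theorem~\ref{thm:HRD-main}, dicyclic groups via an explicit computation, and subgroup-lattice bootstrapping for the remaining cases. The Kneser route is uniform in $G$ and uses the coprimality hypothesis only in Step~1.
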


While this theorem is stated for any finite group, its main interest is if $G$ is cyclic or dicyclic.  As mentioned above,  $\Dspan(G,V)$ is a field analogue to $\operatorname{topdeg}(G,V)$, the minimum degree of polynomials needed to generate $\kk[V]$ as a $\kk[V]^G$-module. As mentioned above, we immediately have $\Dspan(G,V) \leq \operatorname{topdeg}(G,V)$. By combining  \cite[Theorem~1]{kohls2014top}, with results of \cite{cziszter-domokos, cziszter2014noether}, one obtains  $\operatorname{topdeg}(G,V) \leq |G|/2$ when $G$ is not cyclic or dicyclic (and $\Char\kk\nmid |G|$); therefore $\Dspan(G,V) \leq |G|/2$ as well---this inference is carried out carefully in the proof of Theorem~\ref{thm:Dspan-main}. In the dicyclic case, the same reasoning leads to $\Dspan(G,V)\leq |G|/2+1$.  So this theorem on $\Dspan$ is an improvement on a known bound on $\operatorname{topdeg}$ only in the cyclic and dicyclic cases, and in the dicyclic case, only by $1$.\footnote{It is likely that the bound in Theorem~\ref{thm:Dspan-main} is not sharp for dicyclic groups; however, we did not make an effort to push the result for dicyclic groups further than the general statement in the theorem.} On the other hand, if $G$ is cyclic and $V$ contains any faithful one-dimensional representation of $G$, then $\operatorname{topdeg}(G,V)$ is $|G|-1$ exactly. So the theorem tells us that $\Dspan$ is at most about half of $\operatorname{topdeg}$ whenever this holds but $V$ is not essentially scalar. Thus, the cyclic case is the main case.

\end{paragraph}

\begin{paragraph}{Methods and structure.}
The starting point for the proofs of all of our main results is a common strategy in the invariant theory of finite abelian groups. Assuming $\kk$ contains sufficiently many roots of unity, the action of $G$ on $V$ can be diagonalized. Choosing a diagonal basis for $V^*$, the action of $G$ on $\kk[V] \cong \operatorname{Sym}(V^*)$ is then a monomial action, and the invariant ring $\kk[V]^G$ is the  semigroup ring over the normal affine semigroup of invariant monomials. Then properties of the invariant ring can be deduced from properties of the underlying semigroup; see for example \cite{huffman, schmid1991finite, bruns-herzog, smith1996noether, neusel-sezer, cziszter2016interplay, domokos, gandini2019ideals}. Similarly, the field $\kk(V)^G$ of rational invariants can be studied by analyzing the group of invariant Laurent monomials \cite{hubert-labahn, first-beam-paper, blum2024degree}, which can be viewed as a Euclidean lattice.

In particular, it is shown in \cite{first-beam-paper}, respectively \cite{blum2024degree}, that $\bfield$, respectively $\bfieldr$, can be determined in this setting directly from the lattice of invariant Laurent monomials. In particular, $\bfieldr$ is precisely the smallest natural number $d$ so that the group of invariant Laurent monomials, viewed as a Euclidean lattice, is generated by its members of $L^1$-norm $\leq d$.  In a similar way, it will be deduced below from results in \cite{first-beam-paper} and \cite{bbs-derksen} that $\Dspan$ can be determined from the way the lattice of invariant Laurent monomials sits inside the lattice of all Laurent monomials. Almost all of the results in the present work are proven by analyzing these lattices. We quote the needed material from \cite{first-beam-paper, blum2024degree, bbs-derksen} below in Section~\ref{sec:preliminaries}.

A key ingredient in the proof of Theorem~\ref{thm:gen-deg}, bounding $\bfieldr$ in the case $G=\ZZ/p\ZZ$, is {\em Minkowski's geometry of numbers} \cite{minkowski1910geometrie}. This body of theory concerns interactions between lattices and convex sets in Euclidean space; a modern reference is \cite{lekkerkerker-gruber}. It provides us with theorems that, given a lattice and a vector space norm, will allow us to find a basis for that lattice such that all but one of the basis vectors are short with respect to that norm. We work with these short vectors, and the equation that defines the lattice of invariant Laurent monomials, to find an additional vector that completes the basis and is not too long. Theorem~\ref{thm:real-reps-bfield} is deduced from Theorem~\ref{thm:gen-deg}.

Meanwhile, Theorem~\ref{thm:Dspan-main}, which is our main result on $\Dspan$, is proven with a case analysis depending on whether  $G$ is cyclic,  dicyclic, or neither. For groups that are neither cyclic nor dicyclic, we deduce it with essentially no work from known results on $\beta(G,V)$ and $\operatorname{topdeg}(G,V)$. For cyclic groups, we prove it along lines reminiscent, in broad outline, of the proof of \cite[Theorem~3.11]{first-beam-paper}, which asserts that $\bfield(\ZZ/p\ZZ,V)\leq (p+3)/2$ if $V$ contains at least three nontrivial isotypic components. Although the details differ greatly, both proofs involve bootstrapping from detailed information on the $m=2$ case that is obtained using convexity arguments. For dicyclic groups, we use a characterization of $\Dspan$ in terms of representation theory, developed in \cite{bbs-derksen}, to compute $\Dspan$ explicitly for a faithful irreducible representation, and then bootstrap to an arbitrary representation. For both the cyclic and dicyclic cases, the bootstrapping technique involves estimates obtained from a combinatorial analysis of the (order-theoretic) lattice of normal subgroups of $G$.

The paper is organized as follows. Section~\ref{sec:preliminaries} summarizes the reduction of the study of $\bfield$, $\bfieldr$, and $\Dspan$ to lattices, and collects various  lemmas from the literature. Section~\ref{sec:bfieldr} proves Theorem~\ref{thm:gen-deg} and deduces Theorem~\ref{thm:real-reps-bfield} from it. Section~\ref{sec:HRD} proves Theorem~\ref{thm:Dspan-main}, in two steps: Section~\ref{sec:pf-of-HRD-main} proves Theorem~\ref{thm:HRD-main}, a result on lattices in the $m=2$ case that forms the basis of the cyclic case, and Section~\ref{sec:pf-of-Dspan-main} uses this to complete the cyclic case, and handles the rest of the cases.
\end{paragraph}

\section{Preliminaries}\label{sec:preliminaries}

As mentioned above, the starting point for  our proofs, except for the nonabelian part of Theorem~\ref{thm:Dspan-main}, is to reduce the invariant theory to a problem about sublattices of the integer lattice, and to study the latter. In this section, we walk briefly through this reduction, and gather together the tools required to study the resulting lattices, including a classical theorem from Minkowski's geometry of numbers. We also describe a characterization of $\Dspan$ in terms of representation theory, developed in \cite{bbs-derksen}, that will be used in the nonabelian part of Theorem~\ref{thm:Dspan-main}.

\begin{notation}
    Throughout, $G$ is a finite group; $\kk$ is a field of characteristic coprime to the order of $G$; and $V$ is a finite-dimensional, faithful representation of $G$ over $\kk$, of dimension $N$. The group algebra of $G$ over $\kk$ is $\kk G$. 
    
    In this section, up until Notation~\ref{not:irr}, $G$ is stipulated to be abelian. The character group of $G$, i.e., the group (under pointwise multiplication) of group homomorphisms from $G$ to the multiplicative group of roots of unity in an algebraic closure of $\kk$, is denoted $\widehat G$. 
\end{notation}

If $\kk$ contains enough roots of unity, then the action of $G$ on $V$ is diagonalizable. Choosing a basis $x_1,\dots,x_N$ that diagonalizes the induced action on $V^*$, all Laurent monomials in the $x_j$'s are then eigenvectors for the induced action on $\kk(V)$. With respect to the results on $\bfield$ and $\Dspan$, this hypothesis costs no generality, because neither $\bfield$ (by \cite[Lemma~2.1]{first-beam-paper}) nor $\Dspan$ (by \cite[Lemma~2.2]{bbs-derksen}) is affected by base change. However, our main result on $\bfieldr$ does require this hypothesis on $\kk$, as it is an upper bound, and $\bfieldr$ can be lowered by base change \cite[Example~3.3]{blum2024degree}. 

\begin{convention}\label{con:hypothesis-on-k}
    For the rest of this section, we assume that $\kk$ contains $|G|$th roots of unity, and that rational functions are written with respect to a basis $x_1,\dots,x_N$ for $V^*$ that diagonalizes the action of $G$. But in theorem statements in later sections, we state explicit hypotheses on $\kk$ as needed.
\end{convention}

All of the following notation follows \cite{first-beam-paper}.

\begin{notation}\label{not:LM}
    Let $\LM$ denote the group of Laurent monomials in $x_1,\dots,x_N$ under multiplication, and $\LM^G$ the subgroup of invariant Laurent monomials. 
\end{notation}

\begin{notation}\label{not:log-exp}
    Sending a monomial to its exponent vector, we obtain an identification of $\LM$ with the integer lattice $\ZZ^N$. Let $\log:\LM \rightarrow \ZZ^N$ denote this identification, and $\exp:\ZZ^N\rightarrow \LM$ its inverse.
\end{notation}

\begin{convention}
    We adopt the convention that $0\in \NN$, so that the ordinary monomials in $\LM$ have exponent vectors in $\NN^N$, the nonnegative orthant of $\ZZ^N$.
\end{convention}

\begin{convention}
    Throughout this work, the word {\em lattice} refers to Euclidean lattices, and specifically $\ZZ^N$ and its subgroups. (We do not insist that they have full rank.) We will have an occasion in Section~\ref{sec:pf-of-Dspan-main} to mention lattices in the sense of order theory (i.e., posets with meet and join operations) as well; when this comes up, we will always modify the word {\em lattice} with the phrase {\em order-theoretic} to avoid ambiguity.
\end{convention}

The key definition is this:

\begin{definition}\label{def:L(G,V)}
    Denote by $L(G,V)$  the sublattice $\log(\LM^G)$ of $\ZZ^N$ consisting of exponent vectors of invariant Laurent monomials.
\end{definition}

This definition is important because $\bfield$, $\bfieldr$, and $\Dspan$ (defined in \eqref{eq:bfield}, \eqref{eq:bfieldr}, and \eqref{eq:Dspan}) can all be computed from $L(G,V)$ (Lemma~\ref{lem:lattice-knows-all} below). To prepare, we interpret the cosets of $L(G,V)$ in $\ZZ^N$ in terms of the action of $G$ on $\kk(V)$.

\begin{notation}
    Let $\Theta:\LM\rightarrow \widehat G$ be the map that sends a Laurent monomial to the character by which $G$ acts on it.
\end{notation}

By the following lemma, we know the index of $L(G,V)$ in $\ZZ^N$, which is also its covolume in $\RR^N$.

\begin{lemma}[Lemma~2.2 in \cite{first-beam-paper}]\label{lem:index-is-|G|}
The group homomorphism $\Theta\circ\exp: \ZZ^N\rightarrow\widehat{G}$ is surjective with kernel $L(G,V)$. Thus, the cosets of $L(G,V)$ in $\ZZ^N$ are in bijection with the characters of $G$ along this homomorphism, and the index of $L(G,V)$ in $\ZZ^N$ is the order of $G$.
\end{lemma}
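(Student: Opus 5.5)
The plan is to verify three things in turn: that $\Theta\circ\exp$ is a homomorphism, that its kernel is $L(G,V)$, and that it is surjective. The index statement then follows from the first isomorphism theorem together with $|\widehat G| = |G|$.

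First, $\Theta\circ\exp$ is a group homomorphism $\ZZ^N\to\widehat G$. Each $x_j$ is an eigenvector, so $g\cdot x_j = \chi_j(g)x_j$ for some character $\chi_j$. A Laurent monomial $x^{\bfa}=\prod x_j^{a_j}$ then satisfies $g\cdot x^{\bfa} = \prod \chi_j(g)^{a_j} x^{\bfa}$, since $G$ acts on $\kk(V)$ by field automorphisms. Hence $\Theta(\exp(\bfa)) = \prod_j \chi_j^{a_j}$, which is visibly additive in $\bfa$. The kernel consists of the $\bfa$ for which $x^{\bfa}$ is fixed by every $g$, i.e.\ the invariant Laurent monomials. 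By Definition~\ref{def:L(G,V)}, this kernel is exactly $L(G,V)$.

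For surjectivity it suffices to show that the $\chi_j$ generate $\widehat G$. Let $H\le \widehat G$ be the subgroup they generate. If $H\neq\widehat G$, then by duality for finite abelian groups (valid because $\kk$ contains $|G|$th roots of unity and $\Char\kk\nmid|G|$, so $\widehat G\cong G$ and the pairing between $G$ and $\widehat G$ is perfect), there is a nonidentity $g\in G$ with $\chi(g)=1$ for all $\chi\in H$. In particular $\chi_j(g)=1$ for every $j$, so $g$ acts trivially on $V^*$ and hence on $V$. This contradicts the standing faithfulness assumption on $V$.

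Finally, the first isomorphism theorem gives $\ZZ^N/L(G,V)\cong\widehat G$, so cosets of $L(G,V)$ correspond bijectively to characters, and the index of $L(G,V)$ in $\ZZ^N$ is $|\widehat G|=|G|$. The only nonroutine step is the duality claim that a proper subgroup of $\widehat G$ is annihilated by some nontrivial $g$. This requires $|\widehat G|=|G|$, which is where the roots-of-unity hypothesis of Convention~\ref{con:hypothesis-on-k} is used: decompose $G$ as a product of cyclic groups and build characters factor by factor.
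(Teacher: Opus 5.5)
Your proof is correct and complete: the kernel is $L(G,V)$ by definition, surjectivity follows from faithfulness together with duality for finite abelian groups, and the index count is the first isomorphism theorem. The paper does not reprove this lemma (it is quoted from \cite{first-beam-paper}), and your argument is the standard one. One small correction: because $\widehat G$ is defined using roots of unity in $\overline{\kk}$, the equality $|\widehat G|=|G|$ and the perfect pairing need only $\Char\kk\nmid|G|$. The roots-of-unity hypothesis on $\kk$ is instead what guarantees that the diagonalizing basis $x_1,\dots,x_N$, and hence $\Theta$ on all of $\LM$, exists.
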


Now we can explain how to determine $\bfield$, $\bfieldr$, and $\Dspan$ from $L(G,V)$.

\begin{lemma}\label{lem:lattice-knows-all}
    The following hold:
    \begin{enumerate}
        \item $\bfield(G,V)$ is the smallest $d\in \NN$ such that $\NN^N$ contains a generating set for $L(G,V)$ of $L^1$ norm $\leq d$.\label{item:bfield-from-lattice}
        \item $\bfieldr(G,V)$ is the smallest $d\in \NN$ such that $\ZZ^N$ contains a generating set for $L(G,V)$ of $L^1$ norm $\leq d$.\label{item:bfieldr-from-lattice}
        \item $\Dspan(G,V)$ is the smallest $d\in \NN$ such that $\NN^N$ contains at least one representative of $L^1$ norm $\leq d$ from each coset of $L(G,V)$ in $\ZZ^N$.\label{item:Dspan-from-lattice}
    \end{enumerate}
\end{lemma}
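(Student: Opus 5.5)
The plan is to prove the three items separately. All three rest on one basic fact: since $\kk$ contains $|G|$th roots of unity and $x_1,\dots,x_N$ diagonalize the action, every Laurent monomial is a $G$-eigenvector. Concretely, $g\cdot \exp(v) = \Theta(\exp v)(g)\exp(v)$. So $\kk[V]$ and $\kk(V)$ decompose into isotypic pieces indexed by $\widehat G$. By Lemma~\ref{lem:index-is-|G|}, the piece for $\chi$ in $\kk[V]$ is spanned by the ordinary monomials in the corresponding coset of $L(G,V)$.

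\textbf{Items 1 and 2.} First I show that a set $S$ of invariant Laurent monomials generates $\kk(V)^G$ as a field if and only if $\log S$ generates $L(G,V)$ as a group.

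For the "if" direction, suppose $\log S$ generates $L(G,V)$. Then $\kk(S)$ contains every invariant Laurent monomial. The field $\kk(V)^G$ is generated by these monomials. To see this, write an invariant $f/g$ as $f\bar g/(g\bar g)$, where $\bar g=\prod_{\sigma\ne 1}\sigma g$, so the denominator becomes invariant. The numerator is then invariant too. Invariant polynomials are spanned by invariant monomials, because the Reynolds operator, or equivalently the isotypic decomposition, preserves monomials up to scalars.

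For the "only if" direction, suppose $\kk(S)=\kk(V)^G$. Then every invariant Laurent monomial $\exp(v)$ is a rational function in the monomials of $S$. The subfield $\kk(S)$ is contained in the field generated by the torus-type monomials $\exp(\ZZ\log S)$. Since $\kk[\exp(\Lambda)]$ for a lattice $\Lambda$ is a group algebra, its fraction field contains $\exp(v)$ only if $v\in\Lambda$. I would verify this by a grading or leading-term argument: $\exp(v)\,q=p$ with $p,q\in\kk[\Lambda]$ forces $v\in\Lambda$ by comparing supports, since supports of $p$ and $q$ lie in $\Lambda$.

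Next I reduce from arbitrary generators to monomial generators. Given polynomial invariant generators of degree $\le d$, every monomial appearing in one of them is invariant; this follows because the isotypic decomposition is monomial. Replacing each generator by its monomials yields generators, since the monomials generate a field containing the originals and lying in $\kk(V)^G$. For rational generators $f_1/f_2$ with $\deg f_1+\deg f_2\le d$, the same trick applies: decompose $f_1$ and $f_2$ into isotypic components. Here I expect the main obstacle. One must argue that the invariant field is generated by ratios $m_1/m_2$ of monomials from $f_1$ and $f_2$ with matching characters, and hence of $L^1$ norm $\le d$. I plan to rely on the characterization from \cite{blum2024degree}, quoting it as the source of item~2, rather than reproving it. If I had to prove it directly, I would use the $\ZZ^N$-grading of $\kk(V)$ by the torus: the field $\kk(f_1/f_2,\dots)$ is contained in the fraction field of the group algebra on the lattice spanned by differences of exponents of monomials in the $f$'s. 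This shows the lattice generated by these differences, which have norm $\le d$, already has to contain $L(G,V)$. The final step is to intersect with $L(G,V)$ while keeping the norms small, using pairs of monomials within the same coset.

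\textbf{Item 3.} Here $\kk(V)=\bigoplus_\chi \kk(V)_\chi$, and each $\kk(V)_\chi$ is one-dimensional over $\kk(V)^G$: it equals $m_\chi\kk(V)^G$ for any monomial $m_\chi$ of character $\chi$. A set of polynomials spans $\kk(V)$ over $\kk(V)^G$ if and only if its isotypic projections do. Since the projections are monomial, polynomials of degree $\le d$ span if and only if, for each $\chi$, some ordinary monomial of degree $\le d$ has character $\chi$. By Lemma~\ref{lem:index-is-|G|}, this is exactly the condition that each coset contains a representative in $\NN^N$ of norm $\le d$.
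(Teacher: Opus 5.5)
Your proposal is correct. On item~2 you do what the paper does and quote \cite[Lemma~3.8]{blum2024degree}. On items~1 and~3 you give self-contained arguments where the paper only cites.

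For item~1 the paper cites \cite[Lemma~2.6]{first-beam-paper}. You instead reprove it from two facts: the support argument in the group algebra $\kk[\exp\Lambda]$, and the observation that every monomial of an invariant polynomial is invariant. This is sound.

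For item~3 the paper splits the equality into two borrowed inequalities. The upper bound comes from \cite[Lemma~2.5]{first-beam-paper}: a complete set of coset representatives exponentiates to a $\kk(V)^G$-basis of $\kk(V)$. The lower bound comes from $\Dreg\leq\Dspan$ \cite[Lemma~4.1]{bbs-derksen}, plus the observation that $\Dreg$ is exactly the lattice quantity. Your decomposition $\kk(V)=\bigoplus_\chi \kk(V)_\chi$ with $\kk(V)_\chi=m_\chi\kk(V)^G$ gives both directions at once. It is essentially the abelian specialization of Lemma~\ref{lem:Dspan-via-counting-embeddings}, as the paper later remarks. The paper's route buys something different: it makes the coincidence $\Dspan=\Dreg$ visible in this setting.

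Two small cautions. First, the claim that a set of polynomials spans if and only if its isotypic projections do is false for sets that are not $G$-stable. For example, take $\{1+x\}$ when $\ZZ/2\ZZ$ acts on $\kk[x]$ by $x\mapsto -x$. This is harmless here, because $\kk[V]_{\leq d}$ is $\kk$-spanned by monomials. Its $\kk(V)^G$-span is therefore just the sum of the $\kk(V)_\chi$ over the characters $\chi$ of monomials of degree $\leq d$.

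Second, if you carry out your fallback sketch for item~2, do not ``intersect with $L(G,V)$''; that does not keep generators short. Instead use that $f_1=hf_2$ with $h$ invariant, and that isotypic projection is $\kk(V)^G$-linear. Then you may replace $f_1/f_2$ by $(f_1)_\chi/(f_2)_\chi$ for any $\chi$ with $(f_2)_\chi\neq 0$. After this, every difference of exponents of a numerator monomial and a denominator monomial already lies in $L(G,V)$, with norm $\leq \deg f_1+\deg f_2$. Your support argument then shows that these differences generate $L(G,V)$.
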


\begin{proof}
    Statement~\ref{item:bfield-from-lattice} is part of  \cite[Lemma~2.6]{first-beam-paper}. Statement~\ref{item:bfieldr-from-lattice} is part of \cite[Lemma~3.8]{blum2024degree}. For statement~\ref{item:Dspan-from-lattice}, first, it follows from \cite[Lemma~2.5]{first-beam-paper}, with $L=\LM$ and $L'=\LM^G = \exp(L(G,V))$, that any complete set of coset representatives for $L(G,V)$ in $\ZZ^N$ yields (via $\exp$) a $\kk(V)^G$-basis of $\kk(V)$. Therefore, $\Dspan\leq d$ for $d$ as in statement~\ref{item:Dspan-from-lattice}. Conversely, from \cite[Lemma~4.1]{bbs-derksen}, $\Dspan\geq \Dreg$, where $\Dreg$ is the minimum $d$ such that $\kk[V]_{\leq d}$ contains the regular representation as a $\kk G$-module summand. By Lemma~\ref{lem:index-is-|G|}, the characters of $G$ are in bijection with the cosets of $L(G,V)$ in $\ZZ^N$. Meanwhile, $\kk[V]_{\leq d}$ is the $\kk$-span of the part of $\LM$ that is the $\exp$-image of the part of $\NN^N$ of $L^1$-norm $\leq d$. Thus $\kk[V]_{\leq d}$ contains the regular representation precisely when the part of $\NN^N$ of $L^1$-norm $\leq d$ contains a representative from each coset of $L(G,V)$ in $\ZZ^N$. It follows that $\Dreg$ is precisely $d$ as in statement~\ref{item:Dspan-from-lattice}. So the $d$ of statement~\ref{item:Dspan-from-lattice} satisfies $d=\Dreg\leq \Dspan \leq d$, and we have the promised equality.
\end{proof}

\begin{convention}\label{conv:b(L)-notation}
    Lemma~\ref{lem:lattice-knows-all} states that the quantities $\bfield$, $\bfieldr$, and $\Dspan$ are actually functions of the lattice $L(G,V)$. In view of this, if $L\subseteq \ZZ^N$ is any full-rank sublattice of the integer lattice, we can write $\bfield(L)$,  $\bfieldr(L)$, and $\Dspan(L)$ for the quantities in parts~\ref{item:bfield-from-lattice}, \ref{item:bfieldr-from-lattice}, and \ref{item:Dspan-from-lattice} respectively of Lemma~\ref{lem:lattice-knows-all}. In this language, the lemma states that $\bfield(G,V) = \bfield(L(G,V))$, and similarly for $\bfieldr, \Dspan$. As with the original definitions in terms of $G,V$, we can drop the $L$ when it is clear from context.
\end{convention}

Using $L(G,V)$, we can show that the quantities of interest are insensitive to multiplicity, or the presence of trivial characters, in $V$:

\begin{lemma}\label{lem:trivial-and-duplicate}
    The quantities $\bfield(G,V)$, $\bfieldr(G,V)$, and $\Dspan(G,V)$ depend only on the set of distinct, nontrivial characters occurring in $V$, and not on their multiplicities.
\end{lemma}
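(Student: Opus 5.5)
We argue through Lemma~\ref{lem:lattice-knows-all}, which expresses $\bfield$, $\bfieldr$, and $\Dspan$ as functions of the lattice $L=L(G,V)\subseteq\ZZ^N$. Let $\chi_1,\dots,\chi_N$ be the characters by which $G$ acts on the diagonal coordinates $x_1,\dots,x_N$. Then $\Theta\circ\exp$ sends $\bfa\in\ZZ^N$ to $\prod_j\chi_j^{a_j}$, and $L$ is the kernel of this map. Let $V'$ be the representation whose characters are the distinct nontrivial characters $\psi_1,\dots,\psi_m$ among the $\chi_j$, each taken once, with lattice $L'=L(G,V')\subseteq\ZZ^m$. (If $V'$ is not faithful, we keep working with the kernel description directly; faithfulness plays no role in the argument.) It suffices to show that each of the three quantities takes the same value on $L$ and on $L'$. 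We can reduce to two elementary moves: deleting a coordinate whose character is trivial, and deleting a coordinate whose character equals that of another coordinate. The claim then follows by induction on $N-m$.

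\emph{Trivial character.} Say $\chi_N$ is trivial. Then $L=L_0\oplus\ZZ\bfe_N$, where $L_0\subseteq\ZZ^{N-1}$ is the lattice for the first $N-1$ coordinates.
\begin{itemize}
\item For $\Dspan$, every coset of $L$ has a representative with $a_N=0$, and setting $a_N=0$ never increases the $L^1$ norm and preserves membership in $\NN^N$. So the minimal norms of coset representatives agree.
\item For $\bfield$ and $\bfieldr$, a generating set of $L_0$, together with $\bfe_N$ (which lies in $\NN^N$ and has norm $1$), generates $L$. This gives the value on $L$ as at most $\max$ of the value on $L_0$ and $1$. Conversely, projecting a generating set of $L$ onto the first $N-1$ coordinates gives a generating set of $L_0$ with no larger norms that stays in $\NN$ if it started there.
\end{itemize}
The only subtlety is the max with $1$. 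If $L_0$ is all of $\ZZ^{N-1}$ or is empty, its value is $\le1$. This edge case (for instance $V$ trivial) is handled by noting that $\bfield$, with the convention that a generating set may contain unit vectors, is at least $1$ whenever the rank is positive. We can treat $L_0$ of rank $0$ as a degenerate case.

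\emph{Duplicate character.} Say $\chi_{N-1}=\chi_N$. Consider the map $\pi:\ZZ^N\to\ZZ^{N-1}$ that adds the last coordinate onto the $(N-1)$st.
\begin{itemize}
\item $\pi$ preserves characters, so $\pi(L)=L_0$ and $\pi^{-1}(L_0)=L$.
\item $\pi$ does not increase the $L^1$ norm.
\item $\pi$ maps $\NN^N$ onto $\NN^{N-1}$.
\item $\ker\pi$ is spanned by $\bfe_{N-1}-\bfe_N\in L$, which has norm $2$.
\end{itemize}
For $\Dspan$, the cosets correspond bijectively under $\pi$, and a representative in $\NN^{N-1}$ lifts with the same norm by putting $a_N=0$. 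So the two values are equal. For $\bfieldr$ and $\bfield$:
\begin{itemize}
\item Images of generators of $L$ generate $L_0$ with no larger norm.
\item Lifting generators of $L_0$ with $a_N=0$, together with $\bfe_{N-1}-\bfe_N$, generates $L$.
\end{itemize}
The main obstacle is $\bfield$, because $\bfe_{N-1}-\bfe_N$ is not in $\NN^N$. Instead we pick some $\bfv\in L_0\cap\NN^{N-1}$ in the generating set with $v_{N-1}>0$. Its lift $\bfv$ and the shifted copy $\bfv-\bfe_{N-1}+\bfe_N$ are both in $\NN^N$, have the same norm, and their difference gives $\bfe_{N-1}-\bfe_N$. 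Such a $\bfv$ exists: $\chi_{N-1}$ has some finite order $k$, and the generating set must involve coordinate $N-1$ positively because $k\bfe_{N-1}\in L_0$.

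The same issue affects $\bfieldr$ when the value on $L_0$ is $1$, i.e. when $\chi_{N-1}$ is trivial. In that case we first remove trivial characters, so we may assume $\chi_{N-1}$ is nontrivial and every generating vector has norm at least $2$. Hence adjoining the norm-$2$ vector $\bfe_{N-1}-\bfe_N$ does not raise the maximum.
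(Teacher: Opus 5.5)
Your argument is correct, but it is more self-contained than the paper's.

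\textbf{What the paper does.} It does not reprove the claims for $\bfield$ and $\bfieldr$; it cites \cite[Lemma~2.12]{first-beam-paper} and \cite[Lemma~3.11]{blum2024degree}. For $\Dspan$ it proves by lattices only the inequality $\Dspan(G,V')\leq\Dspan(G,V)$. Its arguments there are essentially yours: the coordinate of a trivial character does not affect the coset, and the merge $(\dots,a,b)\mapsto(\dots,a+b)$ preserves the character and the norm on the nonnegative orthant. It gets the reverse inequality from \cite[Theorem~4.4]{bbs-derksen}, which gives $\Dspan(G,V)\leq\Dspan(G,V')$ whenever $V'$ is a faithful homomorphic image of $V$.

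\textbf{What you do differently.} You obtain that reverse direction directly, by padding coset representatives with a zero coordinate. You also give full lattice proofs for $\bfield$ and $\bfieldr$, and you correctly isolate the two real issues there.
\begin{itemize}
\item The kernel generator $\bfe_{N-1}-\bfe_N$ is not in $\NN^N$. Your pair $\bfv$, $\bfv-\bfe_{N-1}+\bfe_N$ handles this neatly, and $\bfv$ exists because $k\bfe_{N-1}\in L_0$.
\item The norm $2$ of that generator might exceed the target value. Removing trivial characters first handles this. The same shifting trick, with the sign chosen according to the sign of $v_{N-1}$, would also work.
\end{itemize}
Your route gives an elementary, citation-free proof in the diagonalizable abelian setting. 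The paper's route for $\Dspan$ instead relies on a structural monotonicity result that holds for arbitrary finite groups.

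\textbf{One correction.} Your edge-case remark is muddled. When $L_0$ has rank $0$, all characters are trivial, so $G$ is trivial and $V'=0$. In that case the two values genuinely differ: $1$ versus $0$ for both $\bfield$ and $\bfieldr$. So this case must be excluded, not handled. Whenever $m\geq 1$, every intermediate $L_0$ in your induction has positive rank. Its value is then at least $1$, and taking the maximum with $1$ is harmless.
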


\begin{proof}
    The statement about $\bfield$, respectively $\bfieldr$, is \cite[Lemma~2.12]{first-beam-paper}, respectively \cite[Lemma~3.11]{blum2024degree}. We see the statement about $\Dspan$ using a similar technique, as follows.
    
    We show that if a subrepresentation $V'$ is obtained from $V$ by deleting a trivial or duplicate character, then $\Dspan(G,V)\geq \Dspan(G,V')$. Neither type of change can destroy faithfulness of $V$, and in either case $V$ surjects onto $V'$ as a $G$-representation, so the inequality $\Dspan(G,V') \geq \Dspan(G,V)$ also holds, by \cite[Theorem~4.4]{bbs-derksen}, and we have equality. The statement about $\Dspan$ in the lemma then follows immediately by induction. For convenience, we order the coordinate functions on $V$ so that the one to be deleted is the last one.
    
    If $V=V'\oplus \chi_1$, where $\chi_1$ is the trivial character, then 
    \[
    L(G,V) = L(G,V')\oplus \ZZ\subseteq \ZZ^{N'}\oplus\ZZ = \ZZ^N,
    \]
    and the coset of $L(G,V)$ to which a point of $\ZZ^N$ belongs does not depend on its final coordinate; thus, the representative of a coset in $\NN^N$ of minimum $L^1$ norm already occurs in $\ZZ^{N'}\oplus \{0\}$, so it already represents the corresponding coset of $L(G,V')$ in $\ZZ^{N'}$, and $\Dspan(G,V')\leq \Dspan(G,V)$.

    If $V=W\oplus \chi \oplus \chi$ for some character $\chi$, and $V'=W\oplus \chi$, then the $\Theta\circ \exp$-image of a point $(\dots,a,b)\in \NN^N$ is the same as the $\Theta\circ\exp$-image of the corresponding point $(\dots,a+b)\in \NN^{N'}$ that is equal in the first $N-2=N'-1$ coordinates. They also have the same $L^1$ norm. Since the $\Theta\circ\exp$-images correspond bijectively with cosets by Lemma~\ref{lem:index-is-|G|}, it follows that for any natural number $d$, the set of points of $\NN^N$ of $L^1$ norm $\leq d$ meets every coset of $L(G,V)$ only if the set of points of $\NN^{N'}$ of $L^1$-norm $\leq d$ meets every coset of $L(G,V')$. This completes the proof.
\end{proof}

Lemma~\ref{lem:trivial-and-duplicate} explains the presence of the quantity $m$, the number of nontrivial isotypic components of $V$, in the main results of this paper on $\ZZ/p\ZZ$. It also allows us, without loss of generality, to restrict our attention to representations $V$ that do not contain any trivial or any duplicate characters (so that $N=m$). 

\begin{convention}
    We assume going forward that $V$ contains no trivial or duplicate characters.
\end{convention}

While the representation $V$ is not in general determined by the lattice $L(G,V)$---for example, twisting by an automorphism of $G$ can change $V$ but will not change $L(G,V)$---the property of not containing trivial or duplicate characters can be read from $L(G,V)$, as follows.

\begin{observation}\label{obs:points-from-triv-and-identical}
    The representation $V$ contains a trivial character if and only if the lattice $L(G,V)$ contains one of the standard basis vectors $\bfe_i$. It contains a pair of identical characters if and only if $L(G,V)$ contains a point $\bfe_i-\bfe_j$ ($i\neq j$).
\end{observation}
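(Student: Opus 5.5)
The plan is to unwind both statements through the homomorphism $\Theta\circ\exp$ of Lemma~\ref{lem:index-is-|G|}, whose kernel is $L(G,V)$. Under our standing conventions, $x_i$ is a diagonal coordinate on which $G$ acts by a character $\chi_i$, so that $\Theta(\exp(\bfe_i)) = \Theta(x_i) = \chi_i$. Since $\Theta$ is multiplicative, a vector $\bfa\in\ZZ^N$ lies in $L(G,V)$ exactly when $\prod_i \chi_i^{a_i}$ is the trivial character.

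For the first statement, apply this with $\bfa=\bfe_i$. Then $\bfe_i\in L(G,V)$ if and only if $\chi_i$ is trivial. Hence $L(G,V)$ contains some standard basis vector if and only if some $\chi_i$ is trivial, which is to say $V$ contains a trivial character.

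For the second statement, apply it with $\bfa=\bfe_i-\bfe_j$ for $i\neq j$. Here $\Theta(\exp(\bfe_i-\bfe_j)) = \Theta(x_i/x_j) = \chi_i\chi_j^{-1}$, so $\bfe_i-\bfe_j\in L(G,V)$ if and only if $\chi_i=\chi_j$. Because the characters of $V$ are, up to the diagonal change of basis, exactly the characters $\chi_i$ attached to the coordinates $x_i$, the existence of such $i\neq j$ is the same as $V$ containing a pair of identical characters.

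There is no real obstacle here. The only point needing care is that the characters of $V$ and of $V^*$ differ by inversion: the coordinate $x_i$ carries the inverse of the character on the corresponding basis vector of $V$. Inversion preserves both triviality and the coincidence of two characters, so both equivalences still hold.
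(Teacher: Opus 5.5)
Your proof is correct and is essentially the argument the paper leaves implicit (it states this as an observation without proof): $\bfe_i\in L(G,V)$ exactly when $x_i$ is invariant, and $\bfe_i-\bfe_j\in L(G,V)$ exactly when $x_i/x_j$ is invariant, i.e.\ when $x_i$ and $x_j$ carry the same character, which follows from the kernel description in Lemma~\ref{lem:index-is-|G|}. Your remark that passing between the characters of $V$ and of $V^*$ is an inversion, which preserves both triviality and equality of characters, correctly settles the only bookkeeping subtlety.
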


To work with $L(G,V)$, it is convenient to be able to express it explicitly in terms of equations. If $G=\langle g\rangle$ is cyclic of order $n$, with generator $g$, then the characters of $G$ can be represented by integers $A\in\ZZ$ via the correspondence
\begin{equation}\label{eq:integers-characters}
A\leftrightarrow (g\mapsto \zeta^A),
\end{equation}
where $\zeta$ is some fixed primitive $n$th root of unity in $\kk$; note that $A$ is only determined mod $n$. Then, if $A_1,\dots,A_m\in \ZZ$ represent the characters $\Theta(x_1),\dots,\Theta(x_m)$ by which $G$ acts on $x_1,\dots,x_m$, the lattice $L(G,V)$ is the set of integer solutions $(a_1,\dots,a_m)\in \ZZ^m$ to the equation
\begin{equation}\label{eq:lattice-equation}
    A_1a_1 + \dots + A_ma_m = 0\pmod n.
\end{equation}
We use this frequently below. Similarly, if $G$ is (abelian but) not cyclic, say with generators $g_1,\dots,g_r$ of orders $n_1,\dots,n_r$ respectively, then a character of $G$ can be specified by an $r$-tuple of integers $(A_1,\dots,A_r)$ via the correspondence
\[
(A_1,\dots,A_r) \leftrightarrow (g_i \mapsto \zeta_i^{A_i}, \; i=1,\dots,r),
\]
where each $\zeta_i$ is a fixed primitive $n_i$th  root of unity, and each $A_i$ is determined mod $n_i$. Then $L(G,V)$ is described by a system of $r$ equations
\begin{align*}
    A_{11}a_1 + \dots + A_{1m}&a_m = 0\pmod {n_1},\\
    &\vdots\\
    A_{r1}a_1 + \dots + A_{rm}&a_m = 0\pmod {n_r},
\end{align*}
where for $j=1,\dots,m$, the tuple $(A_{1j},\dots,A_{rj})$ represents the character by which $G$ acts on $x_j$.

For use in Section~\ref{sec:HRD}, we restate Observation~\ref{obs:points-from-triv-and-identical} in terms of these equations in the special case that $m=2$. In this case, if it happens that $V$ contains a trivial character or a pair of identical characters, then it factors through a one-dimensional representation, so $G$ is necessarily cyclic and $L(G,V)$ is necessarily described by a single equation mod $n$.

\begin{observation}\label{obs:points-from-triv-and-identical-2}
    If $m=2$, then $L(G,V)$ contains the point $(1,0) = \bfe_1$, respectively $(0,1) = \bfe_2$, respectively $(1,-1) = \bfe_1-\bfe_2$, if and only if it is defined by an equation $A_1a_1+A_2a_2=0\pmod n$ with $A_1$, respectively $A_2$, respectively $A_1-A_2$, equal to $0$ mod $n$.
\end{observation}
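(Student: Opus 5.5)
The plan is to combine Lemma~\ref{lem:index-is-|G|} with Observation~\ref{obs:points-from-triv-and-identical}, specialized to $m=2$. First I settle the setup. By the preceding discussion, when $m=2$ and $V$ contains a trivial character or two identical characters, $V$ factors through a one-dimensional representation. Faithfulness then forces $G$ to be cyclic, say of order $n$ with generator $g$. So in every case where one of the three points can lie in $L(G,V)$, I will describe $L(G,V)$ by a single congruence \eqref{eq:lattice-equation}, namely
\[
A_1a_1+A_2a_2\equiv 0\pmod n .
\]
Here $A_1, A_2$ encode the characters $\Theta(x_1), \Theta(x_2)$ via \eqref{eq:integers-characters}.

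Next I run the three cases by direct substitution. The point $(1,0)$ satisfies the congruence if and only if $A_1\equiv 0\pmod n$. This in turn holds if and only if $\Theta(x_1)$ is trivial, by the correspondence \eqref{eq:integers-characters}, since $\zeta$ is a primitive $n$th root of unity. The point $(0,1)$ is handled the same way, giving $A_2\equiv 0$. For $(1,-1)$, membership means $A_1-A_2\equiv 0\pmod n$. This is exactly the condition that $x_1$ and $x_2$ carry the same character, i.e.\ $\Theta(x_1)=\Theta(x_2)$. Equivalently, it says $\Theta\circ\exp(1,-1)=\Theta(x_1)\Theta(x_2)^{-1}$ is trivial, which matches the kernel description in Lemma~\ref{lem:index-is-|G|}.

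I also need to address the phrase ``defined by an equation'': the equation is not unique, since one may scale by units or pass to a noncyclic description. For the forward direction, I simply take the defining equation \eqref{eq:lattice-equation} arising from the characters, and membership gives the stated congruence. For the converse, any equation of this form defining $L(G,V)$ with, say, $A_1\equiv0$ visibly contains $(1,0)$. So both directions are immediate from substitution. The main subtlety, such as it is, lies only in justifying that a single equation mod $n$ suffices, and that was already addressed by the cyclicity remark preceding the observation.
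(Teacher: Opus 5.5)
Your proposal is correct and takes essentially the paper's route. The paper gives no separate proof: it presents this as a restatement of Observation~\ref{obs:points-from-triv-and-identical}, using the preceding remark that a trivial or repeated character forces $V$ to factor through a one-dimensional representation, so $G$ is cyclic and $L(G,V)$ is cut out by a single congruence mod $n$. Membership of $\bfe_1$, $\bfe_2$, $\bfe_1-\bfe_2$ then reduces to substitution, exactly as you do, with your appeal to Lemma~\ref{lem:index-is-|G|} supplying the justification the paper leaves implicit.
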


We make use of a classical result from geometry of numbers, {\em Minkowski's Second Theorem} (also known as {\em Minkowski's Theorem on Successive Minima}). If $\Delta$ is a convex, centrally symmetric subset of $\RR^N$ containing a neighborhood of the origin, and $L$ is a lattice, then the {\em successive minima} $\lambda_1,\dots,\lambda_N$ of $\Delta$ with respect to $L$ are defined to be
\[
\lambda_i = \lambda_i(\Delta,L) := \inf(r>0 : \dim \langle L\cap r\Delta\rangle_{\RR}\geq i),
\]
where $r\Delta=\{rx\in \RR^N : x\in \Delta\}$ is the Minkowski $r$-dilation of $\Delta$, and $\langle - \rangle_{\RR}$ denotes the $\RR$-span. The following upper bound on the product of the successive minima was proven by Minkowski (\cite[F\"unftes Kapitel, \S~50]{minkowski1910geometrie}, see also \cite[Chapter~2, Section~ii, equation (11)]{lekkerkerker-gruber}):

\begin{theorem*}[Minkowski's Second Theorem]
    With notation as above, we have
    \begin{equation}\label{eq:minkowski-2nd}
    \lambda_1\dots\lambda_N \Vol(\Delta) \leq 2^N\det(L).
    \end{equation}
\end{theorem*}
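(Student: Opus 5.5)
The plan follows the classical argument of Minkowski in the streamlined form due to Estermann and Henk. The idea is to construct a volume-controlled map from a scaled copy of $\Delta$ into a set whose lattice translates are pairwise disjoint, and then compare volumes.

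First, normalizations. The set $\Delta$ is compact convex with nonempty interior (if it is unbounded, some $\lambda_i=0$ and there is nothing to prove). Since both sides of \eqref{eq:minkowski-2nd} transform by $|\det T|$ under a linear change of variables $T$, we may assume $L=\ZZ^N$, so $\det L=1$; the general case then follows. By compactness, the infima defining the $\lambda_i$ are attained. So we can pick linearly independent $v_1,\dots,v_N\in L$ with $v_i\in\lambda_i\Delta$, and set $M_i=\langle v_1,\dots,v_i\rangle_\RR$. By minimality, every lattice point in the interior of $\lambda_i\Delta$ lies in $M_{i-1}$. After a further unimodular-compatible change of coordinates we may assume $M_i=\RR^i\times\{0\}$; this is only a linear change of the real coordinates, and volume ratios are tracked exactly.

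The key step is a packing statement for each $i$. For $x\in\RR^N$, write $x=(y,z)$ with $y\in M_{i-1}$ and $z$ in the complementary coordinates. Then the sets $\tfrac{\lambda_i}{2}\operatorname{int}\Delta+u$, for $u\in L$, are pairwise disjoint \emph{modulo} $M_{i-1}$ along the $z$-coordinate. Indeed, an overlap would produce $u-u'\in\lambda_i\operatorname{int}\Delta$, by central symmetry and convexity, with $u-u'\notin M_{i-1}$, contradicting the definition of $\lambda_i$.

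Next, following Henk, we build a map $\phi:\tfrac12\Delta\to\RR^N$. It keeps the $M_1$-component scaled by $\lambda_1$, and on each fibre layer $M_i/M_{i-1}$ it replaces the body by an appropriately recentered section scaled by $\lambda_i$. Concretely, in each fibre over the coordinates beyond $M_{i}$, we take the section of $\tfrac{\lambda_i}{2}\Delta$ and translate it to the centroid-adjusted position of the $\tfrac{\lambda_{i-1}}{2}\Delta$ section. We then show two things. First, $\phi$ is injective with Jacobian $\lambda_1\cdots\lambda_N$, so that $\Vol(\phi(\tfrac12\Delta))=2^{-N}\lambda_1\cdots\lambda_N\Vol(\Delta)$. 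Second, the translates $\phi(\tfrac12\operatorname{int}\Delta)+u$, $u\in\ZZ^N$, are pairwise disjoint, which is proved by induction on $i$ using the packing statement above on each layer. Disjointness of lattice translates of a measurable set forces its volume to be at most $\det L=1$, which gives \eqref{eq:minkowski-2nd}.

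The main obstacle is the second step: defining $\phi$ so that it is simultaneously volume-multiplying by exactly $\prod\lambda_i$ and keeps lattice translates disjoint. Naively scaling the $i$-th coordinate by $\lambda_i$ does not preserve convexity-based disjointness, because the fibre sections move as the base point varies. I would first try the Estermann device. This uses the convexity of $\Delta$ to show that, for two points in the same class modulo $M_{i-1}$, the midpoint of the corresponding fibre sections stays inside $\tfrac{\lambda_i}{2}\Delta$, and this is enough to propagate disjointness layer by layer. The check is purely convex-geometric and I expect it to be the technical heart of the argument.
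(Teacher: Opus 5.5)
You have a genuine gap. The paper does not prove this statement; it quotes it from Minkowski and Lekkerkerker--Gruber, so your argument has to stand on its own. You name the right strategy, an Estermann-type map followed by a packing/volume comparison. But the map $\phi$ is never actually defined. Its one nontrivial property, that the translates $\phi(\tfrac12\operatorname{int}\Delta)+u$, $u\in L$, are disjoint, is exactly the step you defer as the ``technical heart'' you expect to check.

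The hints you give do not supply that step. The midpoint statement about fibre sections is not the fact that is needed. Translating the section of $\tfrac{\lambda_i}{2}\Delta$ to the position of the $\tfrac{\lambda_{i-1}}{2}\Delta$ section runs the wrong way: at each stage the image must land \emph{inside} the next larger dilate, since that is the only set your packing statement controls.

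Be careful, too, about what the packing statement says. Your argument proves only the pointwise fact: if $x,y\in\tfrac{\lambda_i}{2}\operatorname{int}\Delta$ and $x-y\in L$, then $x-y\in M_{i-1}$. The stronger reading, that translates by $u,u'$ with $u-u'\notin M_{i-1}$ have disjoint images in $\RR^N/M_{i-1}$, is false. Take $L=\ZZ^2$ and $\Delta$ the parallelogram with vertices $\pm\bfe_1,\pm(\tfrac12,\tfrac75)$. Then $\lambda_1=1<\lambda_2=\tfrac{15}{14}$, so $M_1=\RR\bfe_1$ is forced, yet $\tfrac{\lambda_2}{2}\operatorname{int}\Delta$ projects onto $(-\tfrac34,\tfrac34)$ in the second coordinate. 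So the layers cannot be treated independently; they must be coupled.

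\textbf{The missing construction.}

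\emph{Setup.} Take linear coordinates adapted to the flag $M_1\subset\dots\subset M_{N-1}$; you do not need $L=\ZZ^N$. Write $x=(x',x'')$ with $x'$ the first $i$ coordinates, and put $\mu_i=\lambda_{i+1}/\lambda_i\ge 1$. Let $F(x'')$ be the fibre of $\tfrac{\lambda_i}{2}\operatorname{int}\Delta$ over $x''$, and choose $c(x'')\in F(x'')$ measurably (the centroid will do). Define $\psi_i(x',x'')=(x'+(\mu_i-1)c(x''),\,\mu_i x'')$.

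\emph{Three properties of $\psi_i$.} First, convexity gives $F+(\mu-1)c\subseteq\mu F$ whenever $c\in F$. Since $\mu_iF(x'')$ is the fibre of $\tfrac{\lambda_{i+1}}{2}\operatorname{int}\Delta$ over $\mu_ix''$, the map $\psi_i$ sends $\tfrac{\lambda_i}{2}\operatorname{int}\Delta$ into $\tfrac{\lambda_{i+1}}{2}\operatorname{int}\Delta$. Second, by Fubini, $\psi_i$ multiplies volumes by $\mu_i^{N-i}$. Third, the shift depends only on $x''$, so $\psi_i(x)-\psi_i(y)\in M_i$ forces $x-y=\psi_i(x)-\psi_i(y)$.

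\emph{Disjointness.} Let $\Psi=\psi_{N-1}\circ\dots\circ\psi_1$ on $\tfrac{\lambda_1}{2}\operatorname{int}\Delta$, and suppose $\Psi(x)-\Psi(y)\in L$. The pointwise packing fact at level $N$ puts this difference in $M_{N-1}$. Hence it equals $\psi_{N-2}\circ\dots\circ\psi_1(x)-\psi_{N-2}\circ\dots\circ\psi_1(y)$, a lattice vector that is a difference of two points of $\tfrac{\lambda_{N-1}}{2}\operatorname{int}\Delta$. Descending level by level in this way, you reach $x-y\in L\cap\lambda_1\operatorname{int}\Delta=\{0\}$.

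\emph{Conclusion.} So $\Psi(\tfrac{\lambda_1}{2}\operatorname{int}\Delta)$ injects into $\RR^N/L$. Its volume is $2^{-N}\lambda_1^N\Vol(\Delta)\prod_{i=1}^{N-1}\mu_i^{N-i}=2^{-N}\lambda_1\cdots\lambda_N\Vol(\Delta)$, which is therefore at most $\det(L)$; this is the theorem. Your $\phi$ is this $\Psi$ precomposed with scaling by $\lambda_1$.
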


In \cite{first-beam-paper}, a class of representations $V$ of $G=\ZZ/p\ZZ$ was given for which $\bfield$ attains the conjectured bound of $\lceil p / \lceil m / 2 \rceil \rceil$; it was asked in \cite[Question~5.4]{first-beam-paper} whether, for sufficiently high $p$ and fixed $m$, these are the only representations of $\ZZ/p\ZZ$ for which $\bfield$ attains this bound. This class of representations is described by the following proposition, which serves as a lemma for Theorem~\ref{thm:gen-deg} below; the latter result also allows to conclude that, for fixed $m$ and  sufficiently high $p$, these are the only representations for which $\bfieldr$ attains the bound. (Note that this conclusion is in the direction of \cite[Question~5.4]{first-beam-paper}, but does not answer it as $\bfield$ can be higher than $\bfieldr$.) The proof that $\bfield$ behaves as promised for these representations is given in \cite{first-beam-paper} for two of the representations in the class (and it is noted that the other cases are similar), and the (very slight) modification needed to adapt the argument to $\bfieldr$ is given in \cite{blum2024degree}. For completeness, we outline those arguments here, and indicate the modifications needed for the other representations in the class.

\begin{proposition}[slightly strengthening Proposition~5.2 in \cite{first-beam-paper}]\label{prop:sharp-case}
Let $G=\ZZ/p\ZZ$ for $p$ prime. Let $m$ be a natural number $<p$. let $\kk$ be a field containing distinct $p$th roots of unity, let $\zeta$ be a primitve $p$th root of unity, and let $V=\kk^m$. Suppose the generator $[1]\in G$ acts on $V$ via the matrix $\operatorname{diag}(\zeta,\zeta^{-1},\zeta^2,\dots,\zeta^{\pm \lceil m/2\rceil})$, where the set of exponents is $\{\pm 1, \pm 2, \dots, \pm \lceil m/2 \rceil\}$ if $m$ is even, and the same set minus any one element if $m$ is odd. Then
\[
\bfield(G,V) = \bfieldr(G,V) = \left\lceil \frac{p}{\lceil m/2\rceil}\right\rceil.
\]
\end{proposition}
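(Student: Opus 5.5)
Write $k=\lceil m/2\rceil$ and $d=\lceil p/k\rceil$. Let $L=L(G,V)$ be cut out by $\sum_j A_j a_j\equiv 0 \pmod p$, where the $A_j$ run over the given exponent set $S\subseteq\{\pm1,\dots,\pm k\}$. By Lemma~\ref{lem:lattice-knows-all} we have $\bfieldr(L)\le\bfield(L)$. The plan is to prove two inequalities and sandwich: the upper bound $\bfield(L)\le d$, obtained by exhibiting a generating set of $L$ in $\NN^m$ of $L^1$ norm at most $d$; and the lower bound $\bfieldr(L)\ge d$, obtained by showing that the vectors of $L$ of $L^1$ norm $<d$ all lie in a proper sublattice.

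\emph{Upper bound.} First take the vectors $\bfe_i+\bfe_j$ for each pair of coordinates with $A_i=-A_j$; these have norm $2\le d$. Next take the vectors $A_i\bfe_j+\bfe_i$ type relations adjusted to be nonnegative. For instance, for $A_j=j$ with $j\le k$, the vector $j\bfe_1+\bfe_{(-j)}$ has norm $j+1$ and lies in $L$, since $j\cdot 1 + (-j) = 0$ (here $\bfe_{(-j)}$ is the coordinate whose exponent is $-j$). Finally take one vector that handles the index: a monomial in the character $k$ (or $-k$) whose exponent sum is $p$. Concretely, write $p=qk+r$ with $0<r\le k$ and use $q\bfe_{(k)}+\bfe_{(r)}$. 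Its weighted sum is $qk+r=p\equiv0$, and its norm is $q+1=\lceil p/k\rceil=d$. In the odd case, one of $\pm r$ may be the deleted exponent; in that case substitute $\bfe_{(-r)}$ on the other side, or use a $q\bfe_{(-k)}+\bfe_{(-r)}$ version, adjusting signs. The step to verify is that these vectors generate $L$. I would do this by checking that they generate a lattice of index $p$: modulo the norm-2 and small vectors, every coordinate reduces to a multiple of $\bfe_1$ (or of $\bfe_{(k)}$), and the remaining vector $q\bfe_{(k)}+\bfe_{(r)}$ together with $p\bfe_1$ generates the one-dimensional quotient. Lemma~\ref{lem:index-is-|G|} then gives equality with $L$. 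Norms are fine because $j+1\le k+1\le d$ whenever $p>k^2$, and this holds for $p$ large relative to $m$. For small $p$ the inequality $k+1\le d$ must be checked separately or replaced by a different choice of generators, and I flag this as a detail to handle.

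\emph{Lower bound (main obstacle).} Suppose $v\in\ZZ^m$ has $\|v\|_1<d$. Then $|\sum_j A_j v_j|\le k\|v\|_1\le k(d-1)<p$. Being divisible by $p$, this sum must therefore equal zero over $\ZZ$. Hence every short vector of $L$ lies in the sublattice $L_0=\{a:\sum_j A_ja_j=0\}$. This $L_0$ has rank $m-1$, so it is a proper subgroup of the full-rank lattice $L$, and short vectors cannot generate $L$. This gives $\bfieldr\ge d$ in both the even and the odd case, with no use of the specific shape of $S$ beyond $|A_j|\le k$. The real care is all in the upper bound: making the generating set nonnegative and of the right norm when an exponent has been deleted in the odd case. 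Combining the two bounds gives the equality in Proposition~\ref{prop:sharp-case}.
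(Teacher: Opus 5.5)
Your lower bound is exactly the paper's Step~2: every $L$-vector of norm $<d$ has $|\sum_j A_ja_j|\le k(d-1)<p$, so it lies in the rank-$(m-1)$ lattice $L_0$ cut out by $\sum_jA_ja_j=0$ over $\ZZ$. Your upper-bound plan, generators of $L_0$ plus one nonnegative vector of norm $d$, is the paper's Steps~1 and~3. Your vector $q\bfe_{(k)}+\bfe_{(r)}$ is fine: $L/L_0\cong\ZZ$ via $a\mapsto\frac1p\sum_jA_ja_j$, and it maps to $1$.

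The gap is the one you flagged, and it is not a technicality. Your generators $j\bfe_{(1)}+\bfe_{(-j)}$ of $L_0$ have norm up to $k+1$, and $k+1\le\lceil p/k\rceil$ only when $p>k^2$. The proposition is asserted for every prime $p>m$, and the range $m<p\le k^2$ does occur. For $p=7$, $m=6$ one has $k=3$ and $d=3$, while $3\bfe_{(1)}+\bfe_{(-3)}$ has norm $4$. So as written you prove the upper bound only for $p>k^2$. That would suffice for the use in Theorem~\ref{thm:gen-deg}, but not for the proposition as stated.

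The missing idea is that $L_0$ has a generating set (in fact a basis) of nonnegative vectors of norm at most $3$, independent of $k$.
\begin{itemize}
\item \emph{Even case.} Take $\bfe_{(j)}+\bfe_{(-j)}$ for $1\le j\le k$, and $\bfe_{(j+1)}+\bfe_{(-j)}+\bfe_{(-1)}$ for $1\le j\le k-1$. Modulo these, $\bfe_{(-j)}\equiv-\bfe_{(j)}$ and $\bfe_{(j+1)}\equiv\bfe_{(j)}+\bfe_{(1)}$, so $\bfe_{(\pm j)}\equiv\pm j\,\bfe_{(1)}$. Your own quotient argument then shows they generate $L_0$.
\item \emph{Odd case.} Negate all coefficients so the deleted exponent is some $-A<0$. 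This also makes $\bfe_{(k)}$ and $\bfe_{(r)}$ available for your norm-$d$ vector, so the sign juggling you describe is unnecessary. Use the even-case vectors for indices below $A$, which brings in $\pm1,\dots,\pm(A-1)$ and $A$. Then, for $J=A,\dots,k-1$, bring in $-(J+1)$ via $\bfe_{(-(J+1))}+\bfe_{(J)}+\bfe_{(1)}$ (this is $\bfe_{(-2)}+2\bfe_{(1)}$ when $J=1$), and bring in $J+1$ via $\bfe_{(J+1)}+\bfe_{(-(J+1))}$.
\item \emph{These fit under $d$.} One always has $d\ge 3$. For odd $p$, $m\le p-1$ gives $k\le (p-1)/2$, so $p/k>2$. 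For $p=2$ one has $m=1$ and $L_0=0$.
\end{itemize}
This is how the paper gets the upper bound uniformly for all $p>m$.
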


\begin{proof}[Proof sketch]
    In this circumstance, per \eqref{eq:lattice-equation}, $L:=L(G,V)$ is the lattice of integer solutions to an equation mod $p$ whose coefficients are $\pm 1, \dots \pm \lceil m/2\rceil$ if $m$ is even, and this same set minus any one element if $m$ is odd. It will be convenient for the proof to index the coordinates by the coefficients themselves, so we take the equation to be
    \[
    a_1 - a_{-1} + 2a_2 - 2a_{-2} + \dots \pm \lceil m / 2\rceil a_{\pm \lceil m/2 \rceil} = 0 \pmod p.
    \]
    In view of Lemma~\ref{lem:lattice-knows-all}, proving the claimed result is equivalent to showing that $L$ is  generated by its nonnegative-orthant elements of $L^1$-norm $\leq \lceil p / \lceil m/2 \rceil \rceil$, and not by the set of elements (in any orthant) of strictly lower $L^1$ norm. The main ideas of the argument are:
    \begin{enumerate}
        \item The codimension-1 sublattice $L_0$ defined by the equation
        \[
        a_1 - a_{-1} + 2a_2 - 2a_{-2} + \dots \pm \lceil m / 2\rceil a_{\pm \lceil m/2 \rceil} = 0
        \]
        is generated by its nonnegative-orthant points of $L^1$-norm $\leq 3$; in fact, it has a basis of such points.\label{step:codim-1-sublattice}
        \item There is no point of $L$ (in any orthant) that lies off of $L_0$ and has $L^1$-norm $<\lceil p / \lceil m/2\rceil \rceil$, thus $L$ is not generated by points of $L^1$-norm $< \lceil p / \lceil m/2\rceil \rceil$.\label{step:lower-bound}
        \item There {\em is} a point $L$ of $L^1$-norm exactly $\lceil p/\lceil m/2\rceil \rceil$ lying in the nonnegative orthant that, together with $L_0$, generates $L$.\label{step:upper-bound}
    \end{enumerate}
    These steps are carried out in detail in \cite[Proposition~5.2]{first-beam-paper} for the cases that $m$ is even, or that $m$ is odd and the missing coefficient is $-\lceil m/2\rceil$, except that Step~2 is only checked for points in the nonnegative orthant because \cite{first-beam-paper} only concerns $\bfield$. In \cite[Proposition~4.5]{blum2024degree} it is noted how to adapt the argument for Step~2 so that it is quantified over points in any orthant (thus the result is obtained for $\bfieldr$). We now indicate how to modify the arguments so that they work for the other possible missing coefficients in the odd-$m$ case.

    The lattice $L$ is unaffected by negating all the coefficients, so it is no loss of generality to assume the missing coefficient is negative. Let $A\in \{ 1, 2, \dots, \lceil m/2 \rceil \}$ be such that the missing coefficient is $-A$, so the set of coefficients is $S:=\{\pm 1 ,\pm 2, \dots, \pm (A-1),A,\pm (A+1),\dots,\pm \lceil m/2 \rceil\}$.

    Step~\ref{step:codim-1-sublattice} is proven by induction on the size of $S$. For each new coefficient, we find a nonnegative-orthant point of $L_0$ of degree $2$ or $3$ whose value in the corresponding new coordinate is $1$. As argued in \cite[Proposition~5.2]{first-beam-paper}, these points form a generating set for $L_0$; and it is actually a basis, by counting, since the inductive procedure yields $m-1$ points and the rank of $L_0$ is $m-1$. The argument of \cite[Proposition~5.2]{first-beam-paper} gets us as far as $S=\{\pm 1, \pm 2, \dots, \pm (A-1), A\}$. To go further, we can alternate negative and then positive coefficients, starting with the negatives. When we add a negative coefficient $J=-A-1, \dots, -\lceil m/2\rceil$, we can use the point with coordinates $a_J = a_{-J-1} = a_{1} = 1$ and the rest of the coordinates zero, as long as $A>1$ or $J<-2$. If $A=1$ and $J=-2$ we can use the point $a_J=1$, $a_1=2$ and the rest of the coordinates zero. When we add a positive coefficient $J=A+1,\dots,\lceil m/2\rceil$, we can use the point with coordinates $a_J=a_{-J}=1$ and the rest of the coordinates zero. 

    The argument for Step~\ref{step:lower-bound} given in \cite[Proposition~5.2]{first-beam-paper}, with the modification from \cite[Proposition~4.5]{blum2024degree}, works verbatim in this more general context. The key point is only that all the coefficients have absolute value at most $\lceil m/2 \rceil$. The argument for Step~\ref{step:upper-bound} in \cite[Proposition~5.2]{first-beam-paper} works verbatim too, as $S$ contains all positive numbers from $1$ to $\lceil m/2\rceil$. 
\end{proof}

We now drop the hypothesis that $G$ is abelian. For use in Section~\ref{sec:pf-of-Dspan-main}, we describe a representation-theoretic characterization of $\Dspan$ developed in \cite{bbs-derksen}. The characterization is implicit in \cite[Sections~2.3 and~2.4]{bbs-derksen} but is not spelled out, so we include a proof for completeness.  

\begin{notation}\label{not:irr}
    Let $\Irr_\kk(G)$ denote the set of isomorphism classes of finite-dimensional irreducible representations of $G$ over $\kk$. For each $\lambda\in \Irr_\kk(G)$, fix a specific representation $V_\lambda$ representing the given isomorphism class, and let $d_\lambda$ be its dimension as a $\kk$-vector space. 
\end{notation}

Because $\kk(V)$ is a $G$-representation, one can ask for the $\kk G$-module maps of $V_\lambda$ into $\kk(V)$. Observe that because $\kk(V)$ contains $\kk(V)^G$ (on which $G$ acts trivially), the set $\Hom_{\kk G}(V_\lambda,\kk(V))$ of such maps forms a $\kk(V)^G$-vector space. In particular, we can ask whether a set of $\kk G$-module embeddings of $V_\lambda$ in $\kk(V)$ is linearly dependent or independent over $\kk(V)^G$.

\begin{lemma}\label{lem:Dspan-via-counting-embeddings}
    Let $G$ be a finite group, $\kk$ an algebraically closed field of characteristic prime to $|G|$, and $V$ a finite-dimensional representation of $G$. Then $\Dspan(G,V)$ is the minimum $d$ so that, for each $\lambda \in \Irr_\kk(G)$, there are $d_\lambda$ many $\kk(V)^G$-linearly independent embeddings of $V_\lambda$ in $\kk[V]_{\leq d}\subseteq \kk(V)$.
\end{lemma}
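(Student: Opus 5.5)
The proof will be a comparison of two descriptions of $\kk(V)$, both organized by isotypic components. Set $F:=\kk(V)^G$ and $K:=\kk(V)$. Since $\Char\kk\nmid|G|$, $G$ acts faithfully on $K$, so by Artin's theorem $K/F$ is Galois with group $G$. The normal basis theorem then gives $K\cong F G$ as $F G$-modules, i.e.\ $K$ is the regular representation over $F$.

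Since $\kk$ is algebraically closed and $\Char\kk\nmid|G|$, we have $F G\cong \bigoplus_\lambda \operatorname{End}_F(V_\lambda\otimes_\kk F)$. The regular representation $FG$ therefore contains $V_\lambda\otimes F$ with multiplicity $d_\lambda$. Consequently $\Hom_{\kk G}(V_\lambda,K)=\Hom_{FG}(V_\lambda\otimes F,K)$ is an $F$-vector space of dimension exactly $d_\lambda$.

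Next, $K$ decomposes as $K=\bigoplus_\lambda K_\lambda$, where $K_\lambda$ is the $\lambda$-isotypic component. Each $K_\lambda$ is an $F$-subspace, and the evaluation map
\[
\Hom_{\kk G}(V_\lambda,K)\otimes_\kk V_\lambda\to K_\lambda
\]
is an isomorphism. This holds because $\Hom_{\kk G}(V_\lambda,K)\otimes_\kk V_\lambda\cong \Hom_{FG}(V_\lambda\otimes F,K)\otimes_F(V_\lambda\otimes F)$, and Schur's lemma applies over $F$: the endomorphism ring of $V_\lambda\otimes F$ is $F$, since $\kk$ is algebraically closed and $V_\lambda$ is absolutely irreducible. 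Now let $\phi_1,\dots,\phi_{d_\lambda}$ be any $F$-linearly independent embeddings. Because the space of embeddings has $F$-dimension $d_\lambda$, they form an $F$-basis, and so the images $\phi_i(V_\lambda)$ span $K_\lambda$ over $F$.

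With these facts, we prove the two inequalities.

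\emph{($\geq$)} Suppose $\kk[V]_{\le d}$ spans $K$ over $F$. Applying the $G$-equivariant, $F$-linear isotypic projection $\pi_\lambda:K\to K_\lambda$, we see that $(\kk[V]_{\le d})_\lambda$ spans $K_\lambda$ over $F$. Here we use that $\kk[V]_{\le d}$ is a $G$-stable subspace, so it is the direct sum of its isotypic pieces. Since $\kk$ is algebraically closed, $(\kk[V]_{\le d})_\lambda$ is a sum of images of embeddings $\psi_j:V_\lambda\to\kk[V]_{\le d}$. If the $F$-span of the $\psi_j$ had dimension less than $d_\lambda$, the evaluation isomorphism above would give that $\sum_j F\,\psi_j(V_\lambda)$ is a proper subspace of $K_\lambda$. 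This is a contradiction, so $d_\lambda$ of the $\psi_j$ are $F$-independent.

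\emph{($\leq$)} Suppose that for each $\lambda$ there are $d_\lambda$ $F$-independent embeddings into $\kk[V]_{\le d}$. By the basis argument above, their images span $K_\lambda$ over $F$. Summing over $\lambda$, $\kk[V]_{\le d}$ spans $K$ over $F$.

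The main obstacle is making the evaluation isomorphism rigorous. That means checking that $\Hom_{\kk G}(V_\lambda,K)$ is genuinely the $F$-space $\Hom_{FG}(V_\lambda\otimes F,K)$, and that extension of scalars preserves irreducibility and the endomorphism ring. The cleanest route is to work over $F$ throughout and use that $FG$ is split semisimple, because $F\supseteq\kk$ and $\kk$ is a splitting field for $G$.
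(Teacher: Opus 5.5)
Your proposal is correct and follows essentially the same route as the paper. The normal basis theorem gives $\kk(V)\cong FG$, adjunction shows $\Hom_{\kk G}(V_\lambda,\kk(V))$ has $F$-dimension $d_\lambda$, the evaluation map identifies this space tensored with $V_\lambda$ with the $\lambda$-isotypic component, and spanning is checked one isotypic component at a time. Your converse, observing that $d_\lambda$ independent embeddings already form an $F$-basis, is a slightly more direct version of the paper's re-choice of decomposition.

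One correction: $G$ acts faithfully on $\kk(V)$ because of the paper's standing hypothesis that $V$ is faithful, not because $\Char\kk\nmid|G|$. The lemma genuinely needs this: for trivial $V$, no nontrivial irreducible embeds in $\kk(V)$ at all.
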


\begin{proof}
    We will show that a finite-dimensional $G$-stable $\kk$-subspace of $\kk(V)$ spans $\kk(V)$ over $\kk(V)^G$ if and only if, for each $\lambda\in \Irr_\kk(G)$, it is able to receive $d_\lambda$ embeddings of $V_\lambda$ that are linearly independent over $\kk(V)^G$. The lemma follows from this by taking the subspace in question to be $\kk[V]_{\leq d}$.
    
    Put $\KK:=\kk(V)^G$. By the Normal Basis Theorem from Galois theory, $\kk(V)$ is isomorphic as a $G$-representation to the regular representation of $G$ over $\KK$. Because $\kk$ is algebraically closed, $V_\lambda$ is absolutely irreducible for each $\lambda$, and its multiplicity as a $\kk G$-module summand of $\kk G$ is $d_\lambda$. It follows that $\KK\otimes_\kk V_\lambda$ is irreducible over $\KK$ and its multiplicity in $\kk(V)\cong \KK G$ is also $d_\lambda$---equivalently, $\dim_\KK\Hom_{\KK G}(\KK\otimes_\kk V_\lambda,\kk(V)) = d_\lambda$---and, as $\lambda$ varies in $\Irr_\kk(G)$, the $\KK\otimes_\kk V_\lambda$ form a complete set of representatives of the isomorphism classes of irreducible representations of $G$ over $\KK$. 

    Let $W\subseteq \kk(V)$ be a finite-dimensional $G$-stable $\kk$-subspace, and let
    \[
    W = \bigoplus_{\lambda\in \Irr_\kk(G)} \bigoplus_{j=1}^{a_\lambda} W_{\lambda,j}
    \]
    be a decomposition into irreducibles, where $a_\lambda$ is the multiplicity of $\lambda$ in $W$, and each $W_{\lambda,j}$ is isomorphic to $V_\lambda$. Fix an isomorphism $\phi_{\lambda,j}:V_\lambda\rightarrow W_{\lambda,j}$ for each $\lambda\in \Irr_\kk(G)$ and each $j=1,\dots,a_\lambda$. Because $W_{\lambda,j}\subseteq W \subseteq \kk(V)$, we can view each $\phi_{\lambda,j}$ as an element of $\Hom_{\kk G}(V_\lambda, \kk(V))$. The extension-restriction adjunction gives us a natural isomorphism
    \[
    \Hom_{\KK G}(\KK\otimes_\kk V_\lambda,\kk(V))\cong \Hom_{\kk G}(V_\lambda,\kk(V)),
    \]
    so we can view each $\phi_{\lambda,j}$ as an element of the corresponding $\Hom_{\KK G}(\KK\otimes_\kk V_\lambda, \kk(V))$. There is also a canonical identification of the $\KK\otimes_\kk V_\lambda$-isotypic component $\kk(V)_\lambda$ of $\kk(V)$ with 
    \[
    \Hom_{\KK G}(\KK \otimes_\kk V_\lambda,\kk(V)) \otimes_\KK (\KK\otimes_\kk V_\lambda) \cong \Hom_{\KK G}(\KK \otimes_\kk V_\lambda,\kk(V)) \otimes_\kk V_\lambda
    \]
    via the evaluation map $f\otimes v \mapsto f(v)$. Therefore, the following are equivalent:
    \begin{enumerate}
        \item The subspaces $W_{\lambda,1},\dots,W_{\lambda,a_\lambda}$ span $\kk(V)_\lambda$ over $\KK$.
        \item Elements $\phi_{\lambda,j}(v)$, with $1\leq j \leq a_\lambda$ and $v\in V_\lambda$, span $\kk(V)_\lambda$ over $\KK$.
        \item Elements $\phi_{\lambda,j}\otimes v$, with $1\leq j \leq a_\lambda$ and $v\in V_\lambda$, span $\Hom_{\KK G}(\KK \otimes_\kk V_\lambda,\kk(V)) \otimes_\kk V_\lambda$ over $\KK$.
        \item For any fixed nonzero $v\in V_\lambda$, the elements $\phi_{\lambda,1}\otimes v,\dots,\phi_{\lambda,a_\lambda}\otimes v$ span $\Hom_{\KK G}(\KK \otimes_\kk V_\lambda,\kk(V)) \otimes_\kk V_\lambda$ over $\KK G \cong \KK\otimes_\kk \kk G$.
        \item The elements $\phi_{\lambda,1},\dots,\phi_{\lambda,a_\lambda}$ span $\Hom_{\KK G}(\KK\otimes_\kk V_\lambda,\kk(V))$ over $\KK$.
        \item There are $d_\lambda$ of the $\phi_{\lambda,1},\dots,\phi_{\lambda,a_\lambda}$ that are linearly independent over $\KK$.
    \end{enumerate} 
    Since
    \[
    \kk(V) = \bigoplus_{\lambda\in \Irr_\kk(G)} \kk(V)_\lambda,
    \]
    it follows that $W$ spans $\kk(V)$ over $\KK$ if and only if, for each $\lambda$, there exist $d_\lambda$ of the $\phi_{\lambda,j}$'s that are $\KK$-linearly independent.

    If $W$ spans $\kk(V)$ over $\KK$ then the above constructs, for each $\lambda\in \Irr_\kk(G)$,  $d_\lambda$ many $\KK$-linearly independent $\kk G$-module embeddings of $V_\lambda$ in $W$. Conversely, if we are given $d_\lambda$ many $\KK$-linearly independent $\kk G$-module embeddings of $V_\lambda$ in $W$ for each $\lambda$, then certainly they are $\kk$-linearly independent, so in the above we can choose a decomposition for $W$ into irreducibles, and isomorphisms of the $V_\lambda$ with the components of this decomposition, so that the given embeddings appear among the $\phi_{\lambda,j}$. Then, the previous paragraph gives us that $W$ spans $\kk(V)$ over $\KK$.
\end{proof}

\begin{observation}\label{obs:matrix-test-of-li}
    For given $\lambda\in \Irr_\kk(G)$ and a proposed set of $d_\lambda$ embeddings $\psi_1,\dots,\psi_{d_\lambda}$ of $V_\lambda$ into a subspace of $\kk(V)$, the linear independence condition in Lemma~\ref{lem:Dspan-via-counting-embeddings} can be tested by fixing a basis $v_1,\dots,v_{d_\lambda}$ for $V_\lambda$ and testing nonsingularity of the matrix
    \[
    M:=\left( \psi_j(v_i)\right)_{ij}
    \]
    over $\kk(V)$. 
\end{observation}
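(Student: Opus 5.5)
The plan is to translate the linear-independence condition of Lemma~\ref{lem:Dspan-via-counting-embeddings} into a kernel condition for $M$, and then use Galois descent along the extension $\kk(V)/\KK$, where $\KK=\kk(V)^G$, to pass between kernels over $\kk(V)$ and over $\KK$.

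\emph{Step 1: reduce to a kernel statement.} A $\KK$-linear combination $\sum_j c_j\psi_j$ with $c_j\in\KK$ is a $\kk G$-map $V_\lambda\to\kk(V)$. Since $v_1,\dots,v_{d_\lambda}$ is a basis of $V_\lambda$, this map vanishes if and only if $\sum_j c_j\psi_j(v_i)=0$ for every $i$, that is, if and only if $Mc=0$. So the $\psi_j$ are $\KK$-linearly independent exactly when $\ker M\cap\KK^{d_\lambda}=0$. One direction is then immediate: if $M$ is nonsingular over $\kk(V)$, then $\ker M=0$, so in particular $\ker M\cap\KK^{d_\lambda}=0$.

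\emph{Step 2: show $\ker M$ is Galois-stable.} For the converse, suppose $M$ is singular. I will show that $\ker M\subseteq\kk(V)^{d_\lambda}$ is stable under the coordinatewise action of $G$. Let $\rho$ be the matrix representation of $V_\lambda$ in the basis $v_i$. Because each $\psi_j$ is $G$-equivariant,
\[
g\cdot\psi_j(v_i)=\psi_j(gv_i)=\sum_k\rho(g)_{ki}\,\psi_j(v_k),
\]
so $g(M)=\rho(g)^{T}M$. If $Mc=0$, applying $g$ gives $g(M)\,g(c)=0$, hence $\rho(g)^{T}M\,g(c)=0$. Since $\rho(g)$ is invertible, this gives $M\,g(c)=0$.

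\emph{Step 3: descend.} The extension $\kk(V)/\KK$ is Galois with group $G$, assuming faithfulness as in the standing notation. By Galois descent for vector spaces (Speiser's lemma, the vector-space form of Hilbert~90), every $G$-stable $\kk(V)$-subspace of $\kk(V)^{d_\lambda}$ is spanned by vectors in $\KK^{d_\lambda}$. A nonzero $\ker M$ therefore contains a nonzero $c\in\KK^{d_\lambda}$, which is a nontrivial $\KK$-linear dependence among the $\psi_j$. This gives the equivalence.

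The only step with real content is Step~3. If one prefers not to cite descent, a concrete substitute is available: take a nonzero kernel vector $c$, scale it so that some coordinate equals $1$, and average $t\,c$ over $G$, that is, form $\sum_g g(tc)$ for a suitable $t\in\kk(V)$. Such a $t$ exists because the trace map $\kk(V)\to\KK$ is surjective (linear independence of characters), and the averaged vector is a nonzero element of $\ker M\cap\KK^{d_\lambda}$.
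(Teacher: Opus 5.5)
Your proposal is correct and follows essentially the same route as the paper: you identify $\KK$-linear relations among the $\psi_j$ with $\KK$-rational vectors in $\ker M$, show that $\ker M$ is $G$-stable, and invoke Galois descent. The only differences are minor. You get $G$-stability from the direct computation $g(M)=\rho(g)^{T}M$, which is a bit cleaner than the paper's rowspace argument. Your trace-averaging remark also makes the descent step self-contained.
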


\begin{proof}
    A $\kk(V)^G$-linear relation between the $\psi_j$'s is an element in the kernel of $M$ over $\kk(V)^G$. The rowspace of $M$ is $G$-stable because it is the image of the $G$-equviariant map $\kk(V)^G\otimes_\kk V_\lambda\rightarrow \kk(V)^{d_\lambda}$ given by $v\mapsto (\psi_1(v),\dots,\psi_{d_\lambda}(v))$; thus the kernel of $M$ is also $G$-stable, and it follows from Galois descent that this kernel is nontrivial over $\kk(V)^G$ if (and only if) it is nontrivial over $\kk(V)$.   
\end{proof}

\begin{remark}
    If $G$ is abelian, then $d_\lambda=1$ for all $\lambda$, and the linear independence condition in Lemma~\ref{lem:Dspan-via-counting-embeddings} is automatically satisfied. In this case the characterization of $\Dspan$ given by Lemma~\ref{lem:Dspan-via-counting-embeddings} collapses to the characterization used in the proof of Lemma~\ref{lem:lattice-knows-all}.
\end{remark}

\section{Bounds on $\bfieldr$ and $\bfield$}\label{sec:bfieldr}

In this section, we prove the following theorem.

\begin{theorem}\label{thm:gen-deg}

    Let $G=\ZZ/p\ZZ$, $p$ an odd prime. Let $\kk$ be a field containing distinct $p$th roots of unity. Let $V$ be a finite-dimensional representation of $G$ over $\kk$, with $m$ distinct, nontrivial isotypic components.

    For fixed $m$, and $p$ sufficiently large, we have
    \[
    \bfieldr \leq \left\lceil\frac{p}{\lceil m/2\rceil}\right\rceil.
    \]
    Furthermore, equality is attained (if and) only if $V$ is one of the representations described in \cite[Question~5.4]{first-beam-paper}, up to trivial and duplicate characters.
\end{theorem}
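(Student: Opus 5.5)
By Lemma~\ref{lem:lattice-knows-all} and Lemma~\ref{lem:trivial-and-duplicate}, we may assume $V$ has no trivial or duplicate characters, so $N=m$. Then $L=L(G,V)$ is the solution lattice of a single congruence $A_1a_1+\dots+A_ma_m\equiv 0\pmod p$ with the $A_i$ distinct and nonzero mod $p$, and $\det L = p$ by Lemma~\ref{lem:index-is-|G|}. We must show that, for $p\gg_m 0$, $L$ is generated by vectors of $L^1$-norm at most $d:=\lceil p/\lceil m/2\rceil\rceil$, in any orthant.

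The plan is to apply Minkowski's Second Theorem with $\Delta$ the $L^1$ unit ball, for which $\Vol(\Delta)=2^m/m!$. This gives
\[
\lambda_1\cdots\lambda_m\leq m!\,p .
\]

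\textbf{Case split.} If $\lambda_{m-1}$ grows with $p$, then $\lambda_m$ is forced down. If $\lambda_1,\dots,\lambda_{m-1}$ are all bounded by a constant $C(m)$, there are $m-1$ short independent vectors.

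In the second case, the $\RR$-span $H$ of those short vectors is a hyperplane. Since the vectors are $L^1$-bounded by a constant and $p$ is large, $H\cap\ZZ^m$ consists of integer solutions of an honest linear equation $\sum c_i a_i=0$ over $\ZZ$, with small $c_i$ and $c_i\equiv tA_i\pmod p$. Rescaling the $A_i$ by $t^{-1}$, we may assume $A_i=c_i$ are bounded integers.

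The sublattice $L\cap H$ is generated by short vectors: it is a rank-$(m-1)$ lattice of bounded covolume. To complete a basis of $L$, we need one vector $v\in L$ with $\sum c_iv_i=p$, or more precisely one generating $L/(L\cap H)\cong\ZZ$. Its value $\sum c_i v_i$ must be $\pm p$ times a generator. We minimise $\|v\|_1$ subject to $\sum c_iv_i=p$; the minimum is $\lceil p/\max|c_i|\rceil$ up to a bounded correction for hitting the right coset of $L\cap H$.

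Since the $c_i$ are distinct and nonzero, at most two of them share each absolute value. Hence $\max|c_i|\geq\lceil m/2\rceil$, with equality exactly when $\{c_i\}$ is one of the sets in Proposition~\ref{prop:sharp-case}. When $\max|c_i|\geq\lceil m/2\rceil+1$, we get $\lceil p/(\lceil m/2\rceil+1)\rceil+O(1)<d$ for large $p$, which gives the strict inequality. I must also handle the bounded correction: adding short elements of $L\cap H$ adjusts $v$ into the right class at $O_m(1)$ cost. This yields the bound, and the equality characterisation follows, with the equality case itself supplied by Proposition~\ref{prop:sharp-case}.

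In the first case, suppose some $\lambda_j$ with $j\leq m-1$ exceeds a large constant $K$. Order the minima and choose the largest index $k<m$ with $\lambda_k\leq K$; then $\lambda_{k+1},\dots,\lambda_m>K$. Minkowski gives
\[
\lambda_m\leq \frac{m!\,p}{\lambda_1\cdots\lambda_{m-1}}.
\]
This alone only bounds $\lambda_m$ by roughly $m!\,p/K^{m-1-k}$, which is unhelpful when $\lambda_1$ is small. So I would instead iterate the hyperplane idea on the span of the first $k$ vectors. That span gives $k$ small independent integer relations, and I would reduce to a congruence on a quotient of rank $m-k\geq2$ with effective determinant $p$. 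There, all remaining minima are around $p^{1/(m-k)}\ll p/\lceil m/2\rceil$.

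The difficulty is that successive minima vectors need not form a basis, and the generation degree can exceed $\lambda_m$ by a factor of up to $m$. So I would use the standard fact that $L$ has a basis with $\|b_i\|_1\leq i\lambda_i$ (or $\max(1,i/2)\lambda_i$). That gives generation degree $\leq m\lambda_m$, which is $o(p)$ whenever $\lambda_m=o(p)$.

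The key dichotomy is therefore: either $\lambda_m\geq p/K'$, forcing $\lambda_1\cdots\lambda_{m-1}$ bounded and hence the second case; or $\lambda_m=o(p)$ uniformly and we are done. Making this uniform in $p$ for fixed $m$ via a compactness or pigeonhole choice of $K$ is the main obstacle. In the second case, the obstacle is controlling the additive $O(1)$ losses so that they do not exceed the gap $p/\lceil m/2\rceil-p/(\lceil m/2\rceil+1)$, which grows linearly in $p$, so large $p$ suffices.
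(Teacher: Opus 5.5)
Your argument is correct once the final dichotomy is stated with an explicit constant, but it is organized differently from the paper.

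\textbf{How the paper argues.} The paper makes no case split. Minkowski's Second Theorem with $\lambda_i\ge 1$ gives $\lambda_{m-1}\le\sqrt{m!p}$ unconditionally. Mahler's theorem then gives a basis with $\|\bfb_i\|_1=O(\sqrt p)$ for $i\le m-1$. The cofactor vector $(D_1,\dots,D_m)$ of $\bfb_1,\dots,\bfb_{m-1}$ satisfies $\det(\bfb_1,\dots,\bfb_{m-1},\bfw)\equiv 0\pmod p$ on $L$. By primality, the lattice it cuts out is either $L$ or $\ZZ^m$. In both cases some $|D^*|\ge\lceil m/2\rceil$, and strictly larger unless $L$ is a sharp lattice. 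Tiling the affine hyperplane $\det(\bfb_1,\dots,\bfb_{m-1},\cdot)=p$ by the parallelotope of $\bfb_1,\dots,\bfb_{m-1}$ then yields the last basis vector within $O(\sqrt p)$ of $(p/D^*)\bfe_j$. This $O(\sqrt p)$ error is swallowed by the linear gap $p/\lceil m/2\rceil-p/(\lceil m/2\rceil+1)$, so the paper never needs to know whether the short vectors are bounded.

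\textbf{How your route compares.} You split into two regimes: small $\lambda_m$, handled by Mahler alone, and bounded $\lambda_1,\dots,\lambda_{m-1}$, where the same hyperplane argument gives an $O(1)$ error. This buys structural information: near the extremal regime, $L$ is cut out by an equation with bounded integer coefficients. The cost is an extra case analysis and a much worse effective threshold for $p$, which must exceed the cofactor bound, roughly $C(m)^{m-1}$.

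\textbf{Two corrections.}

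First, the ``main obstacle'' you name is not real. Fix $K':=m\lceil m/2\rceil$.
\begin{itemize}
\item If $\lambda_m<p/K'$, the Mahler basis has norms $\le m\lambda_m<p/\lceil m/2\rceil$, which gives strict inequality.
\item If $\lambda_m\ge p/K'$, then $\lambda_1\cdots\lambda_{m-1}\le m!K'$. Since each $\lambda_i\ge 1$, every $\lambda_i\le m!K'$ for $i\le m-1$, a constant depending only on $m$.
\end{itemize}
No compactness or pigeonhole is needed. Relatedly, you call the bound $\lambda_m\le m!p/K^{m-1-k}$ ``unhelpful when $\lambda_1$ is small''; this is mistaken. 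Because $\lambda_1\ge 1$ and $m-1-k\ge 1$, it already gives $\lambda_m\le m!p/K$, and that suffices for a fixed large $K$.

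Second, the congruence $c_i\equiv tA_i\pmod p$ in your bounded case is asserted without justification. Either of two arguments supplies it:
\begin{itemize}
\item Note that $[H\cap\ZZ^m:L\cap H]\in\{1,p\}$. Index $p$ would force the covolume of $L\cap H$ to be at least $p$, contradicting the bounded short vectors once $p$ exceeds that bound.
\item Use the paper's observation that the cofactor vector vanishes mod $p$ on $L$. Its entries are nonzero and smaller than $p$, so it cannot cut out $\ZZ^m$; hence it cuts out exactly $L$.
\end{itemize}
This is the step where ``$p$ large'' is genuinely used in your bounded branch.
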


By Lemma~\ref{lem:trivial-and-duplicate}, we can assume without loss of generality that $V$ has no trivial or duplicate characters. We make this assumption without comment throughout the rest of the secion. Thus $\dim V = m$. 

Let $L := L(G,V)\subseteq \ZZ^m\subseteq \RR^m$ be the lattice of exponent vectors of invariant Laurent monomials, as in Definition~\ref{def:L(G,V)}. Then, as discussed in Section~\ref{sec:preliminaries}, $L$ is the set of integer solutions to the equation
$$A_1a_1+\dots +A_ma_m=0\pmod p,$$
where $A_1,\dots,A_m\in \ZZ$ represent the characters occurring in $V^*$ via the correspondence \eqref{eq:integers-characters}.

By Lemma~\ref{lem:lattice-knows-all}, the statement of Theorem~\ref{thm:gen-deg} is equivalent to the following:

For fixed dimension $m$ and sufficiently large $p$, there exists a generating set for $L$ each of whose members has $L^1$ norm $\leq \left\lceil{p}/{\lceil m/2\rceil}\right\rceil$, with equality attained only in the case that $L$ is a lattice of the form contemplated in the proof of Proposition~\ref{prop:sharp-case}. In fact, we will prove something stronger:
\begin{theorem}\label{thm:gen-deg-lattice}
    For fixed dimension $m$ and sufficiently large $p$:
    \begin{itemize}
        \item There exists a basis for $L$ each of whose members has $L^1$ norm $\leq \left\lceil{p}/{\lceil m/2\rceil}\right\rceil$, and furthermore,
        \item if $L$ has no generating set contained in an $L^1$ ball of radius strictly less than $\lceil p / \lceil m/2\rceil\rceil$, then $L$ is a lattice of the form described in the proof of Proposition~\ref{prop:sharp-case}.
    \end{itemize} 
\end{theorem}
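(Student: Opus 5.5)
Apply Minkowski's Second Theorem to the cross-polytope $\Delta=\{x:\|x\|_1\le 1\}$, with $\Vol(\Delta)=2^m/m!$, and the lattice $L$ of covolume $\det L=p$ (Lemma~\ref{lem:index-is-|G|}). This gives
\[
\lambda_1\cdots\lambda_m\le m!\,p .
\]

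The structural input is that $L$ contains $p\ZZ^m$ and, apart from multiples of $p$, is defined by one congruence. So any $m-1$ linearly independent vectors of small $L^1$ norm should span a primitive sublattice $L_0$ of rank $m-1$. For small vectors, say of norm $<p/m$, lying in $L$ is equivalent to $\sum A_ia_i\equiv 0$. The plan has three steps.

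\emph{Step 1 (short vectors).} Show $\lambda_{m-1}=O_m(1)$ for all sufficiently large $p$, possibly after a different choice. The quantity Minkowski controls directly is the last minimum: $\lambda_m\ge (\det L)^{1/m}$ roughly, so the bound alone only gives $\lambda_1\cdots\lambda_{m-1}\le m!\,p/\lambda_m$. I would instead bound $\lambda_m$ from below by at least $p/\lceil m/2\rceil$ times a constant, as in Step~\ref{step:lower-bound} of Proposition~\ref{prop:sharp-case}. Then $\lambda_1\cdots\lambda_{m-1}\le C_m$, so all of $\lambda_1,\dots,\lambda_{m-1}$ are bounded. This does not hold for every $L$: if $\lambda_m$ is small, then all minima are $O(p^{1/m})$. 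That case is harmless, however. By the standard fact that a lattice has a basis with norms at most $\max(1,i/2)\lambda_i$ (Mahler / Lekkerkerker–Gruber), we get a basis of norm $O_m(p^{1/m})$. For fixed $m\ge 2$ and large $p$ this is far below $p/\lceil m/2\rceil$, with strict inequality. The degenerate case $m=1$ is trivial: $L=p\ZZ$ with bound $p$, and the single-character representation is the sharp one.

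So the real case is $\lambda_{m-1}\le K_m$ and $\lambda_m$ large. Here the vectors realizing $\lambda_1,\dots,\lambda_{m-1}$ span a rank-$(m-1)$ sublattice. Its saturation $L_0=L\cap W$, where $W$ is the real span, has a basis of norm $O_m(1)$, again by the Mahler-type lemma.

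\emph{Step 2 (the integer relation).} Since the basis vectors of $L_0$ have norm $<p/m$, I expect that $L_0$ actually equals $\{a\in\ZZ^m: \sum c_ia_i=0\}$ over $\ZZ$ for some primitive integer vector $c$ with $c\equiv tA \pmod p$ for some unit $t$. The vector $c$ is the normal to $W$, computed as a cross product of the short basis vectors, so $\|c\|_\infty=O_m(1)$. We have $L/L_0\cong\ZZ$, generated by a vector $v$ with $c\cdot v\equiv$ a fixed value. Completing the basis of $L_0$ by a vector $v$ with $c\cdot v=\pm p$ (or its minimal positive value in $c\cdot L$) gives a basis of $L$. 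It remains to find such a $v$ of small $L^1$ norm.

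\emph{Step 3 (optimizing $v$; main obstacle).} We need $v$ with $c\cdot v=p$ (up to adjusting by elements of $L_0$) and
\[
\|v\|_1\le \lceil p/\lceil m/2\rceil\rceil .
\]
The no-trivial and no-duplicate hypotheses mean the $c_i$ are nonzero and pairwise distinct, at least after the equivalence $c\equiv tA$; Observation~\ref{obs:points-from-triv-and-identical} is the tool here. Taking $v$ supported on the coordinate with largest $|c_i|$ gives $\|v\|_1\approx p/\max|c_i|$. Combining coordinates $v=\sum k_i\,\mathrm{sgn}(c_i)\bfe_i$ gives norm $\approx p/\max|c_i|$ plus a correction of $O_m(1)$. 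The correction is needed to hit $p$ exactly, using the gcd structure and the freedom to shift by $L_0$. If $m$ distinct nonzero integers are given, there are at most two of each absolute value, so $\max|c_i|\ge\lceil m/2\rceil$, with equality exactly for the sets $\{\pm1,\dots,\pm\lceil m/2\rceil\}$, or that set minus one element when $m$ is odd.

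When $\max|c_i|\ge\lceil m/2\rceil+1$, the main term is $p/(\lceil m/2\rceil+1)$. This beats $p/\lceil m/2\rceil$ by order $p$, which absorbs the $O_m(1)$ correction for large $p$, and the inequality is strict. The delicate part is the equality case $\max|c_i|=\lceil m/2\rceil$. There one must hit $p$ exactly with norm $\lceil p/M\rceil$, where $M=\lceil m/2\rceil$. Write $p=qM+r$ and take $v=q\bfe_{M}+\bfe_{r}$ (when $r\ne0$), using that all of $1,\dots,M$ occur among the $c_i$. The only remaining possibility, a set with some missing positive value, falls under the previous case by negating. Finally, for the rigidity statement, I would check that strict inequality always fails to be achievable only in the configuration of Proposition~\ref{prop:sharp-case}. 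This means showing that outside it, either Step 1's small-$\lambda_m$ case or the $\max|c_i|>M$ case applies. I expect the careful bookkeeping of Step 2, getting $c$ with the right congruence and controlling the index of $L_0$ so that $c\cdot L = p\ZZ$ exactly, to be the main technical obstacle.
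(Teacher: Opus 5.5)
Your overall strategy can be made to work and differs from the paper's, but Step~1 is wrong as stated. The dichotomy ``either $\lambda_m$ is at least a constant times $p/\lceil m/2\rceil$, or all minima are $O(p^{1/m})$'' is not exhaustive, and the target ``$\lambda_{m-1}=O_m(1)$'' is false in general.

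For a counterexample, take $m=3$ and $L=\{a\in\ZZ^3: a_1-a_2+A_3a_3\equiv 0\pmod p\}$ with $A_3=\lfloor\sqrt p\rfloor$.
\begin{itemize}
\item Since $(1,1,0)\in L$, we get $\lambda_1=2$.
\item Any $a\in L$ not parallel to $(1,1,0)$ has $\|a\|_1\ge|a_1-a_2|+|a_3|\ge\sqrt p/2$. Together with $(-A_3,0,1)\in L$, this gives $\sqrt p/2\le\lambda_2\le\sqrt p+1$.
\item Minkowski then gives $\lambda_3\le 6p/(\lambda_1\lambda_2)\le 6\sqrt p$.
\end{itemize}
So $\lambda_2$ and $\lambda_3$ are both of order $\sqrt p$, which falls in neither of your cases.

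The repair is easy. Split on $\lambda_m\le\epsilon p$ versus $\lambda_m>\epsilon p$, with $\epsilon<1/(m\lceil m/2\rceil)$.
\begin{itemize}
\item If $\lambda_m\le\epsilon p$, Mahler's basis with $\|\bfb_i\|_1\le i\lambda_i$ has all norms $\le m\epsilon p<p/\lceil m/2\rceil$, strictly. This settles both bullets.
\item If $\lambda_m>\epsilon p$, then $\lambda_1\cdots\lambda_{m-1}<m!/\epsilon$. Since each $\lambda_i\ge 1$, this forces $\lambda_1,\dots,\lambda_{m-1}=O_m(1)$, which is all your Steps~2 and~3 use.
\end{itemize}

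With that fix, Steps~2 and~3 go through. The point you flag as the main obstacle is actually short. Take a short basis $\bfb_1,\dots,\bfb_{m-1}$ of $L_0=L\cap W$, and let $D$ be the coefficient vector of $\bfw\mapsto\det(\bfb_1,\dots,\bfb_{m-1},\bfw)$. Then $0<\max_j|D_j|=O_m(1)<p$.
\begin{itemize}
\item In the dichotomy $\Lambda\in\{L,\ZZ^m\}$ of Lemmas~\ref{lem:cross_product_0modp}--\ref{lem:largest_coordinate_bound}, the branch $\Lambda=\ZZ^m$ is now impossible. Hence $L=\{a: D\cdot a\equiv 0\pmod p\}$.
\item By Observation~\ref{obs:points-from-triv-and-identical}, the $D_j$ are nonzero and pairwise distinct as integers.
\item Because $L_0$ is saturated in $L$, its basis extends to a basis $\bfb_1,\dots,\bfb_m$ of $L$. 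So $\gcd_j(D_j)$ divides $D\cdot\bfb_m=\pm p$, and it is $<p$, hence equal to $1$.
\item Therefore every $v\in\ZZ^m$ with $D\cdot v=p$ lies in $L$ and completes the basis. Your $c$ is just $D$.
\end{itemize}

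Compared with the paper: the paper never splits on $\lambda_m$. It uses only $\lambda_{m-1}\le\sqrt{m!p}$, so its $\bfb_1,\dots,\bfb_{m-1}$ have norm $O(\sqrt p)$ and its $D$ may have huge entries. The branch $\Lambda=\ZZ^m$ can then genuinely occur, but it is harmless because it forces some $|D_j|\ge p$. The completing vector comes from tiling $W$ by translates of the parallelotope, with an $O(\sqrt p)$ error. The equality case $|D^*|=\lceil m/2\rceil$ is handed off to Proposition~\ref{prop:sharp-case}.

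Your route costs a case split and the saturation argument. In return you get a defining congruence with $O_m(1)$ coefficients, an $O_m(1)$ error term instead of $O(\sqrt p)$, and an explicit completing vector in the equality case, so the upper bound there does not need Proposition~\ref{prop:sharp-case}. The rigidity bullet then follows as you describe, since both non-sharp branches give strict inequality.
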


The proof of this theorem is assembled below from a number of lemmata. Because it is stated in the regime $p\gg m$, the implied constants in the Landau big-$O$ notation used throughout the proof may depend on $m$ but not on $p$.

Let $\Delta$ be the unit $L^1$ ball in $\RR^m$. Its volume is $2^m/m!$. The covolume of $L$ is its index in $\ZZ^m$, which is $p$ by Lemma~\ref{lem:index-is-|G|}. Define $\lambda_1,\dots,\lambda_m$ to be the successive minima of $\Delta$ with respect to $L$ (the definition is recalled in Section~\ref{sec:preliminaries}). Because $\Delta$ is the closed unit $L^1$ ball in $\RR^m$, each $\lambda_j$ is the smallest positive real number so that there exist $j$ linearly independent elements $\bfv_1,\dots,\bfv_j\in L$ such that
\[
\|\bfv_i\|_1 \leq \lambda_j\text{ for } i=1,\dots,j.
\]
More generally, for any $\bfv \in \RR^m$, the gauge function of $\Delta$, $f(\bfv):=\inf(r>0:\bfv\in r\Delta)$,  coincides with $\|\bfv\|_1$.

Applying Minkowski's Second Theorem (equation \eqref{eq:minkowski-2nd}), we get 
$$\lambda_1 \cdots \lambda_m \frac{2^m}{m!} \leq 2^m p,$$
and thus
$$\lambda_1 \cdots \lambda_m \leq m!p.$$
An upper bound can then be placed on $\lambda_{m-1}$.

\begin{lemma}\label{lem:minima_bound}
The successive minimum $\lambda_{m-1}$ satisfies $$\lambda_{m-1}\leq \sqrt{m!p}.$$

\end{lemma}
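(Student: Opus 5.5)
We will combine the Minkowski product bound $\lambda_1\cdots\lambda_m\le m!\,p$ from the preceding display with two structural facts about $L$.

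First, we give a lower bound $\lambda_1\ge 1$. The successive minima are nondecreasing, $\lambda_1\le\lambda_2\le\dots\le\lambda_m$. Every nonzero point of $L\subseteq\ZZ^m$ is a nonzero integer vector, so its $L^1$ norm is at least $1$. Hence $\lambda_i\ge 1$ for all $i$. In fact, by Observation~\ref{obs:points-from-triv-and-identical} and the standing assumption of no trivial or duplicate characters, $L$ contains no $\bfe_i$, so even $\lambda_1\ge 2$ holds; we will not need this.

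Second, we use the monotonicity $\lambda_{m-1}\le\lambda_m$. Discarding the first $m-2$ factors, each of which is at least $1$, gives
\[
\lambda_{m-1}^2\le \lambda_{m-1}\lambda_m\le \lambda_1\cdots\lambda_m\le m!\,p.
\]
Taking square roots yields $\lambda_{m-1}\le\sqrt{m!\,p}$.

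There is essentially no obstacle here. The only point to check is that the infimum in the definition of $\lambda_i$ is attained and is positive, so that the inequalities above are legitimate. This follows because $L$ is discrete and $\Delta$ is compact. In the degenerate case $m=1$ the statement is vacuous or trivial, and we may assume $m\ge 2$ since $\lambda_{m-1}$ only makes sense there.
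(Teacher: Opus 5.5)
Your proposal is correct and follows essentially the same argument as the paper: bound each of $\lambda_1,\dots,\lambda_{m-2}$ below by $1$ (since $L\subseteq\ZZ^m$), use $\lambda_{m-1}\le\lambda_m$, and combine with the Minkowski product bound $\lambda_1\cdots\lambda_m\le m!\,p$ to get $\lambda_{m-1}^2\le m!\,p$. Your side remark that in fact $\lambda_1\ge 2$ is also correct, and the paper uses exactly this kind of refinement later, in Proposition~\ref{prop:precise_basis_bounds}.
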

\begin{proof}
The minimum $L^1$ norm of any nonzero point in $L$ is at least $1$, as $L \subseteq \ZZ^m$; thus $1\leq \lambda_j$ for all $j$. Also, $\lambda_{m-1}\leq \lambda_m$. Using this, an upper bound can be placed on $\lambda_{m-1}$ by comparing the values of the other successive minima to the  lower bounds just stated, in the following manner:

\[
    1\cdot \ldots \cdot 1\cdot\lambda_{m-1}\cdot\lambda_{m-1}\leq \lambda_1 \cdot \ldots \cdot \lambda_{m-2}\cdot\lambda_{m-1}\cdot \lambda_m 
    \leq m! p.
\]
Therefore, 
\[
\lambda_{m-1}^2\leq m!p.\qedhere
\]
\end{proof}

Using this, we can construct a basis $\bfb_1,\dots,\bfb_m$ for $L$ such that all but $\bfb_m$ have $L^1$ norm small compared to $p$:

\begin{lemma}\label{lem:basis_points_exist}
There exists a basis, $\bfb_1,\bfb_2,\dots,\bfb_{m-1},\bfb_m$, for $L$ such that $\det(\bfb_1,\bfb_2,\dots,\bfb_{m-1},\bfb_m)=p$ and $\|\bfb_i\|_1\leq O(\sqrt{p})$ for $i=1,\dots, m-1$.
\end{lemma}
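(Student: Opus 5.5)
The plan is to start from Lemma~\ref{lem:minima_bound}, which gives linearly independent vectors $\bfv_1,\dots,\bfv_{m-1}\in L$ with $\|\bfv_i\|_1\leq \lambda_{m-1}\leq \sqrt{m!p}$. These span an $(m-1)$-dimensional real subspace $H$. The first step is to pass from this linearly independent set to a basis of the primitive sublattice $L\cap H$, without losing control of the norms. The standard tool is the reduction lemma from geometry of numbers (see \cite[Chapter~2]{lekkerkerker-gruber}): given linearly independent lattice vectors $\bfv_1,\dots,\bfv_{k}$ in a lattice $\Lambda=L\cap H$ of rank $k$, there is a basis $\bfb_1,\dots,\bfb_k$ of $\Lambda$ with
\[
\bfb_i = \sum_{j\leq i} c_{ij}\bfv_j, \qquad c_{ii}\neq 0,\qquad |c_{ij}|\leq 1 \text{ for } j<i .
\]
One constructs this inductively: $\bfb_i$ is chosen in $\Lambda\cap\langle \bfv_1,\dots,\bfv_i\rangle_\RR$ with minimal positive $\bfv_i$-coordinate, and is then reduced modulo $\bfb_1,\dots,\bfb_{i-1}$. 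The triangle inequality then gives $\|\bfb_i\|_1\leq i\,\lambda_{m-1}\leq m\sqrt{m!p}=O(\sqrt p)$ for $i=1,\dots,m-1$, with the implied constant depending only on $m$.

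The second step is to complete this to a basis of $L$. Because $L\cap H$ is a primitive sublattice of $L$, with $H$ a rational subspace and $L\cap H$ saturated, the quotient $L/(L\cap H)$ is torsion-free of rank $1$. Hence there is a vector $\bfb_m\in L$ such that $\bfb_1,\dots,\bfb_m$ is a basis of $L$. Any basis of $L$ has $|\det|$ equal to the covolume, which is $p$ by Lemma~\ref{lem:index-is-|G|}. Replacing $\bfb_m$ by $-\bfb_m$ if necessary, we get $\det(\bfb_1,\dots,\bfb_m)=p$. The statement places no norm requirement on $\bfb_m$, so nothing more is needed.

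The only step needing care is the first, namely ensuring that passing to a genuine basis of $L\cap H$ inflates norms by at most a factor depending on $m$. If I did not want to cite the lemma, I would prove the inductive construction directly, since the coefficient bounds $|c_{ij}|\leq 1$ come straight from the reduction step. Saturation of $L\cap H$ is immediate from its definition as an intersection with a subspace. The remaining points, the determinant sign and the value $p$, are routine.
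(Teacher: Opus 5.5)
Your proof is correct and is essentially the paper's argument. The paper simply cites Mahler's theorem (a basis of $L$ with $\|\bfb_i\|_1\le i\lambda_i$), whose standard proof is exactly the triangular reduction you carry out; you apply it inside $H$ and then complete via saturation of $L\cap H$, rather than to $m$ vectors at once. One point to make explicit: you need $0<c_{ii}\le 1$, not just $c_{ii}\neq 0$, for the triangle-inequality bound $\|\bfb_i\|_1\le i\lambda_{m-1}$, and your construction supplies it because $\bfv_i$ itself has $\bfv_i$-coordinate $1$.
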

\begin{proof}

A theorem of Mahler \cite[Ch. 2 Sec. 10 eq. (11)]{lekkerkerker-gruber} yields the existence of a basis $\bfb_1,\dots,\bfb_m$ for $L$ such that
\begin{equation}\label{eq:mahler}
    \|\bfb_i\|_1 \leq i\lambda_i
\end{equation}
for each $i=1,\dots,m$. Because $[\ZZ^m:L]=p$, we have $\det(\bfb_1,\dots,\bfb_m)=\pm p$, and we can force it to be $+p$ by replacing any $\bfb_i$ with $-\bfb_i$ if needed. 
This gives us 
$$\|\bfb_i\|_1\leq i\lambda_i\leq (m-1)\lambda_{m-1}\leq (m-1)\sqrt{m!p}$$
for any $i=1,\dots,m-1$, with the last inequality by Lemma~\ref{lem:minima_bound}. So, for fixed $m$ and sufficiently large $p$, we get
\[
\|\bfb_i\|_1 \leq O(\sqrt{p})
\]
for any $i=1,\dots,m-1$.
\end{proof}

Fix a basis $\bfb_1,\dots,\bfb_m$ furnished by Lemma~\ref{lem:basis_points_exist} for the rest of the section.

\begin{lemma}\label{lem:cross_product_0modp}
For all $\bfw\in L$, $\det(\bfb_1,\bfb_2,\dots,\bfb_{m-1},\bfw)=0\pmod{p}$.
\end{lemma}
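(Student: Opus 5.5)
Since $\bfb_1,\dots,\bfb_m$ is a basis of $L$, any $\bfw\in L$ can be written as $\bfw=\sum_{i=1}^m c_i\bfb_i$ with $c_i\in\ZZ$. We expand $\det(\bfb_1,\dots,\bfb_{m-1},\bfw)$ using multilinearity in the last column. For $i\le m-1$ the term $\det(\bfb_1,\dots,\bfb_{m-1},\bfb_i)$ vanishes because two columns are equal. Only the $i=m$ term survives, so
\[
\det(\bfb_1,\dots,\bfb_{m-1},\bfw)=c_m\det(\bfb_1,\dots,\bfb_m)=c_m p
\]
by Lemma~\ref{lem:basis_points_exist}. This is divisible by $p$, which proves the claim.

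The argument is a routine verification with no real obstacle. The only inputs are that the $\bfb_i$ form a $\ZZ$-basis of $L$, so that the coefficients $c_i$ are integers, and that the full determinant equals $p$, as arranged in Lemma~\ref{lem:basis_points_exist}.

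As an alternative viewpoint, the map $\bfw\mapsto\det(\bfb_1,\dots,\bfb_{m-1},\bfw)$ is an integer linear form on $\ZZ^m$. Its reduction mod $p$ kills $\bfb_1,\dots,\bfb_{m-1}$ and $\bfb_m$, hence it kills all of $L$; this is the same computation stated differently.

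The conclusion also suggests how the lemma will likely be used next. On $\ZZ^m$ the linear form mod $p$ vanishes exactly on $L$ (it is nonzero mod $p$ on $\ZZ^m$ because $L$ has index $p$). It is therefore proportional mod $p$ to the defining equation $A_1a_1+\dots+A_ma_m\equiv 0\pmod p$. Its coefficients are the $(m-1)\times(m-1)$ minors of $(\bfb_1,\dots,\bfb_{m-1})$, which are small compared to $p$ by the $O(\sqrt p)$ bounds.
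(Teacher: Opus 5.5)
Your proof is correct, but it takes a different route from the paper.

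The paper argues via indices. The vectors $\bfb_1,\dots,\bfb_{m-1},\bfw$ span a sublattice $L'\subseteq L$, so
\[
|\det(\bfb_1,\dots,\bfb_{m-1},\bfw)|=[\ZZ^m:L']=p\,[L:L'].
\]
That argument uses only that the $m$ vectors lie in $L$ and that $[\ZZ^m:L]=p$. It therefore shows more generally that any $m$ vectors of $L$ have determinant divisible by $p$. It never needs $\bfb_1,\dots,\bfb_{m-1}$ to extend to a basis, nor the normalization $\det(\bfb_1,\dots,\bfb_m)=p$. On the other hand, it tacitly assumes $L'$ has full rank. The case $\bfw\in\operatorname{span}(\bfb_1,\dots,\bfb_{m-1})$, where the determinant is $0$, has to be dismissed separately.

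Your multilinearity computation handles every $\bfw$ uniformly and gives the sharper identity $\det(\bfb_1,\dots,\bfb_{m-1},\bfw)=c_m p$. That identity also directly explains a fact used in Lemma~\ref{lem:point_in_L}: the points of $L$ on the hyperplane where this determinant equals $p$ are exactly those with $c_m=1$. Each such point completes $\bfb_1,\dots,\bfb_{m-1}$ to a basis.

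One caution about your closing remarks, which are outside the proof proper. The claim that the linear form is nonzero mod $p$ on $\ZZ^m$ ``because $L$ has index $p$'' does not follow. The form vanishes identically mod $p$ exactly when $\bfb_1,\dots,\bfb_{m-1}$ are linearly dependent mod $p$, and index $p$ does not rule that out.

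For example, take $m=3$ and $L=\{a_1+2a_2+3a_3\equiv 0\pmod p\}$. The vectors $(-2,1,0)$ and $(p,0,0)$ form a basis of the saturated sublattice $L\cap\{a_3=0\}$, so they extend to a basis of $L$. Yet $\det((-2,1,0),(p,0,0),\bfw)=-p\,w_3$.

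This is why the paper's Lemma~\ref{lem:largest_coordinate_bound} separately treats the case $\Lambda=\ZZ^m$, where all $D_j\equiv 0\pmod p$. For the short basis of Lemma~\ref{lem:basis_points_exist} and large $p$, this degenerate case can be excluded by a size argument, but not by the index alone.
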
 

\begin{proof}
The points $\bfb_1,\bfb_2,\dots,\bfb_{m-1},\bfw$ span a sublattice $L'$ of $L$. Since $L'\subseteq L\subseteq \ZZ^m$, $$[\ZZ^m : L']=[\ZZ^m:L]\cdot[L:L']=p\cdot[L:L']=0\pmod p.$$
Since $|\det(\bfb_1,\bfb_2,\dots,\bfb_{m-1},\bfw)|=[\ZZ^m : L']$, we conclude \[
\det(\bfb_1,\bfb_2,\dots,\bfb_{m-1},\bfw)=0\pmod p.\qedhere
\]
\end{proof}

Because $\det$ is a multilinear integer polynomial, $\det(\bfb_1,\bfb_2,\dots,\bfb_{m-1},\bfw)$ is a linear form in $\bfw$ with integer coefficients. Let $D_1,\dots,D_m\in \ZZ$ be these coefficients, so that
\[
D_1w_1 + \dots + D_mw_m = \det(\bfb_1,\bfb_2,\dots,\bfb_{m-1},\bfw)
\]
for all $\bfw=(w_1,\dots,w_m)\in \RR^m$.

\begin{lemma}\label{lem:cross_product_sublattice}
The lattice $\Lambda$ defined by the linear congruence condition $D_1a_1 + \dots + D_ma_m = 0 \pmod p$ contains $L$.\end{lemma}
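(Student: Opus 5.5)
This is nearly immediate from Lemma~\ref{lem:cross_product_0modp}. The plan is to check membership in $\Lambda$ directly from the defining linear form.

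Take any $\bfw=(w_1,\dots,w_m)\in L$. By the definition of the coefficients $D_1,\dots,D_m$ as the coefficients of the integer linear form $\bfw\mapsto\det(\bfb_1,\dots,\bfb_{m-1},\bfw)$, we have
\[
D_1w_1+\dots+D_mw_m=\det(\bfb_1,\dots,\bfb_{m-1},\bfw).
\]
Lemma~\ref{lem:cross_product_0modp} says this determinant is $0\pmod p$. Since $\bfw\in\ZZ^m$ and the $D_i$ are integers, $\bfw$ is an integer solution of the congruence $D_1a_1+\dots+D_ma_m=0\pmod p$. By definition, that means $\bfw\in\Lambda$. As $\bfw$ was arbitrary, $L\subseteq\Lambda$.

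There is no real obstacle here. The only point to keep straight is that the $D_i$ are integers: they are the $(m-1)\times(m-1)$ cofactors, up to sign, of the integer matrix with columns $\bfb_1,\dots,\bfb_{m-1}$, obtained by expanding the determinant along the last column. This integrality is what makes the congruence defining $\Lambda$ meaningful.

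It is worth recording for later use that the inclusion is in fact an equality whenever the $D_i$ are not all divisible by $p$. In that case $\Lambda$ also has index $p$ in $\ZZ^m$. The $D_i$ cannot all vanish, since $\det(\bfb_1,\dots,\bfb_m)=p\neq0$. However, full equality is not needed for the statement at hand.
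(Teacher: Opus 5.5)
Your proof is correct and is essentially the paper's argument: for $\bfw\in L$, the identity $D_1w_1+\dots+D_mw_m=\det(\bfb_1,\dots,\bfb_{m-1},\bfw)$ together with Lemma~\ref{lem:cross_product_0modp} puts $\bfw$ in $\Lambda$. Your additional remark is also correct: $\Lambda=L$ exactly when the $D_i$ are not all divisible by $p$, since both lattices then have index $p$. This is the dichotomy the paper itself exploits in Lemma~\ref{lem:largest_coordinate_bound}.
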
 
\begin{proof}
For all $\bfw\in L$, Lemma~\ref{lem:cross_product_0modp} tells us that $$D_1w_1 + \dots + D_mw_m =\det(\bfb_1,\bfb_2,\dots,\bfb_{m-1},\bfw)=0 \pmod p,$$ so $\bfw\in \Lambda$. Therefore, $\Lambda\supseteq L$.
\end{proof}

\begin{lemma}\label{lem:largest_coordinate_bound}
There exists some $D_j$ such that $|D_j|\geq \lceil m/2\rceil$. And if no $D_j$ strictly exceeds $ \lceil m/2\rceil$, then $L$ is a lattice of the type discussed in the proof of Proposition~\ref{prop:sharp-case}.
\end{lemma}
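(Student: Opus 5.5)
The plan is to split into two cases according to whether the vector $\bfD=(D_1,\dots,D_m)$ vanishes modulo $p$. In the nonvanishing case I will show that $\Lambda=L$, and then read off the distinctness and nonvanishing of the $D_j$ from Observation~\ref{obs:points-from-triv-and-identical}. A pigeonhole count on integers in $[-\lceil m/2\rceil,\lceil m/2\rceil]$ will then finish the argument.

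\emph{Case 1: $D_j\equiv 0\pmod p$ for every $j$.} Here the vector $\bfD$ is nonzero as an integer vector, since
\[
D_1b_{m,1}+\dots+D_mb_{m,m}=\det(\bfb_1,\dots,\bfb_m)=p\neq 0.
\]
So some $D_j$ is a nonzero multiple of $p$, giving $|D_j|\geq p$. Since $m<p$ for $p$ large, this yields $|D_j|>\lceil m/2\rceil$. Both assertions of the lemma hold in this case: the first directly, and the second vacuously.

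\emph{Case 2: some $D_j\not\equiv 0\pmod p$.} Then the map $\ZZ^m\to\ZZ/p\ZZ$, $\bfa\mapsto \sum D_ja_j$, is surjective, so $\Lambda$ has index exactly $p$ in $\ZZ^m$. By Lemma~\ref{lem:cross_product_sublattice} we have $L\subseteq\Lambda$. By Lemma~\ref{lem:index-is-|G|}, $L$ also has index $p$, hence $\Lambda=L$. Thus $L$ is defined by the single congruence $D_1a_1+\dots+D_ma_m\equiv 0\pmod p$.

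We are assuming $V$ has no trivial or duplicate characters. By Observation~\ref{obs:points-from-triv-and-identical}, $L$ therefore contains no $\bfe_i$ and no $\bfe_i-\bfe_j$ with $i\neq j$. Reading this against the defining congruence gives two facts:
\begin{itemize}
\item $D_i\not\equiv 0\pmod p$ for every $i$;
\item $D_i\not\equiv D_j\pmod p$ for $i\neq j$.
\end{itemize}
In particular $D_1,\dots,D_m$ are $m$ distinct nonzero integers.

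Now count. The nonzero integers in $[-k,k]$ number only $2k$, so $\max_j|D_j|\geq \lceil m/2\rceil$. This is the first assertion.

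For the second assertion, suppose no $|D_j|$ exceeds $\lceil m/2\rceil$. Then the $D_j$ are $m$ distinct elements of $\{\pm1,\dots,\pm\lceil m/2\rceil\}$, a set of size $2\lceil m/2\rceil$.
\begin{itemize}
\item If $m$ is even, the $D_j$ form the whole set.
\item If $m$ is odd, the set has size $m+1$, so the $D_j$ form the whole set minus exactly one element.
\end{itemize}
Since $L=\Lambda$ is cut out by this congruence, $L$ is exactly a lattice of the type in the proof of Proposition~\ref{prop:sharp-case}, up to reordering coordinates. Note that $p>2\lceil m/2\rceil$ for large $p$, so distinct integers in this range are also distinct mod $p$; this keeps the identification consistent.

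The only delicate point is the index argument showing $\Lambda=L$. It needs the nonvanishing of $\bfD$ modulo $p$, and the degenerate case where $\bfD\equiv 0\pmod p$ does occur a priori. The plan handles it by observing that it forces $|D_j|\geq p$, which makes the lemma's conclusion automatic. I expect no further obstacles: the remainder is the pigeonhole count.
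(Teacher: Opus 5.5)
Your proof is correct and is essentially the paper's argument. Your split on whether $\bfD\equiv 0 \pmod p$ is equivalent to the paper's dichotomy $\Lambda=\ZZ^m$ versus $\Lambda=L$, which the paper obtains from the primality of $[\ZZ^m:L]=p$; the remaining steps ($|D_j|\geq p$ in the degenerate case, then Observation~\ref{obs:points-from-triv-and-identical} plus the pigeonhole count on $\{\pm1,\dots,\pm\lceil m/2\rceil\}$) match the paper's.
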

\begin{proof}
In view of Lemma~\ref{lem:cross_product_sublattice}, we have 
\begin{equation}\label{eq:lambda-containment}
    L \subseteq \Lambda \subseteq \ZZ^m.
\end{equation}

We know $[\ZZ^m : L]=p$ by Lemma~\ref{lem:index-is-|G|}, and $[\ZZ^m : L] = [\ZZ^m : \Lambda][\Lambda : L]$ by  \eqref{eq:lambda-containment}. Because $p$ is prime, $[\Lambda:L]$ must be $p$ or $1$. 
If $[\Lambda:L]=1$, then $\Lambda=L,$ and if $[\Lambda:L]=p$, then $\Lambda=\ZZ^m.$ We consider each case in turn.

Case 1: $\Lambda=L$. Since the coefficients $A_1,\dots,A_m$ of the equation defining $L$ are pairwise unequal and non-zero mod $p$, the coefficients of the equation defining $\Lambda$, $D_1,\dots,D_m$, are also pairwise unequal and non-zero mod $p$, as $\Lambda = L$ and this information is encoded in the lattice itself by Observation~\ref{obs:points-from-triv-and-identical}, and thus will hold true for the coefficients of any equation defining the lattice.

If there does not exist a $D_j$ such that $|D_j|> \lceil m/2\rceil$, the coefficients of the equation defining $L$ must be $$\pm1, \pm2, \dots ,\pm \left\lceil m/2\right\rceil,$$ with some element excluded in the case of odd $m$, since these are the only sets of $m$ distinct, nonzero integers all of absolute value less than $\lceil m /2 \rceil$. In this case, the $D_j$ of greatest absolute value satisfies $|D_j|= \lceil m/2\rceil$ exactly, and the lattice is of the form discussed in the proof of Proposition~\ref{prop:sharp-case}. Otherwise, there exists a $D_j$ such that $|D_j|> \lceil m/2\rceil$ strictly.

Case 2: $\Lambda=\ZZ^m$. All coefficients of $D_1a_1 + \dots + D_ma_m = 0\pmod p$ must be $0 \pmod p$. Additionally, there must be some coefficient not equal to $0$ because the points $\bfb_1,\bfb_2,\dots,\bfb_{m-1}\in L$ are linearly independent, so there exists some $\bfw\in L$ such that $\det(\bfb_1,\bfb_2,\dots,\bfb_{m-1},\bfw)=p$. This implies $D_1w_1 + \dots + D_mw_m = p,$ in which case there is some nonzero coefficient $D_j=0\pmod p$ for which $|D_j|\geq p>\lceil m/2\rceil$.
\end{proof}

Let $D^*$ be any coefficient $D_j$ satisfying $|D_j|\geq \lceil m/2\rceil$; its existence is guaranteed by Lemma~\ref{lem:largest_coordinate_bound}. The following lemma is the key point in the proof of Theorem~\ref{thm:gen-deg-lattice} (and thus Theorem~\ref{thm:gen-deg}).

\begin{lemma}\label{lem:point_in_L}
There exists a point $\bfb^*\in L$ which forms a basis with $\bfb_1, \dots , \bfb_{m-1}$, and such that 
\[
\|\bfb^*\|_1\leq \frac{p}{|D^*|}  +O(\sqrt p).
\]
\end{lemma}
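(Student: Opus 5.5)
Let $D^*=D_j$ be the chosen coefficient. The plan is to take the last basis vector $\bfb_m$ from Lemma~\ref{lem:basis_points_exist} and reduce it modulo the sublattice $L':=\langle \bfb_1,\dots,\bfb_{m-1}\rangle_\ZZ$, adding in a suitable multiple of a short vector in the $\bfe_j$ direction. The key fact is that any $\bfw\in L$ with $\det(\bfb_1,\dots,\bfb_{m-1},\bfw)=p$ forms a basis with $\bfb_1,\dots,\bfb_{m-1}$, since the index of $\langle \bfb_1,\dots,\bfb_{m-1},\bfw\rangle$ in $\ZZ^m$ would then be $p=[\ZZ^m:L]$. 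So it suffices to find $\bfb^*\in L$ with
\[
D_1b^*_1+\dots+D_mb^*_m = p
\]
and $\|\bfb^*\|_1\leq p/|D^*|+O(\sqrt p)$.

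\textbf{Step 1 (a candidate with the right determinant).} Set $q:=\lfloor p/D^*\rfloor$ (rounding toward zero) and $r:=p-qD^*$, so $|r|<|D^*|$. The vector $q\bfe_j$ has $\|q\bfe_j\|_1\leq p/|D^*|$ and $\det(\bfb_1,\dots,\bfb_{m-1},q\bfe_j)=qD^*$. In general $q\bfe_j\notin L$, so we must correct it.

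\textbf{Step 2 (the correction).} Consider the affine slice
\[
H:=\{\bfw\in\RR^m: \det(\bfb_1,\dots,\bfb_{m-1},\bfw)=p\}.
\]
The set $H\cap L$ equals $\bfb_m+L'$, a translate of the $(m-1)$-dimensional lattice $L'$ inside the hyperplane $H$. Both $\bfb_m$ and $q\bfe_j+\bfu$ lie in $H$, where $\bfu$ is any real vector with $\det(\bfb_1,\dots,\bfb_{m-1},\bfu)=r$. We can take $\bfu=(r/D^*)\bfe_j$, which has $\|\bfu\|_1<1$.

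Writing the difference as $\bfb_m-(q\bfe_j+\bfu)=\sum_{i<m} t_i\bfb_i$ with real $t_i$, we round: set $\bfb^*:=\bfb_m-\sum_{i<m}\lfloor t_i\rceil\bfb_i\in L$, where $\lfloor\cdot\rceil$ denotes rounding to the nearest integer. Then $\bfb^*\in H$, so it forms a basis with $\bfb_1,\dots,\bfb_{m-1}$, and
\[
\bfb^*-q\bfe_j-\bfu=\sum_{i<m}(t_i-\lfloor t_i\rceil)\bfb_i,
\]
whose $L^1$ norm is at most $\tfrac12\sum_{i<m}\|\bfb_i\|_1=O(\sqrt p)$ by Lemma~\ref{lem:basis_points_exist}. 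The triangle inequality then gives
\[
\|\bfb^*\|_1\leq p/|D^*|+1+O(\sqrt p).
\]

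\textbf{Main obstacle.} The crux is verifying that $q\bfe_j+\bfu$ really differs from $\bfb_m$ by an element of the real span of $\bfb_1,\dots,\bfb_{m-1}$. This holds because both vectors lie in $H$, and the linear form $\bfw\mapsto\det(\bfb_1,\dots,\bfb_{m-1},\bfw)$ has kernel exactly that span, since the $\bfb_i$ are linearly independent. With this in hand, no arithmetic input about $p$ is needed beyond Lemma~\ref{lem:basis_points_exist}. The only other care required is the sign bookkeeping when $D^*<0$, which is handled by choosing the sign of $q$ so that $qD^*$ has the same sign as $p$, making $q$ negative in that case.
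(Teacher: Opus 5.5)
Your proof is correct and is essentially the paper's argument: the paper places $(p/D^*)\bfe_j$ on the affine hyperplane $\{\bfw:\det(\bfb_1,\dots,\bfb_{m-1},\bfw)=p\}$, finds the translate $\bfb^*+\mathcal P$ of the half-open parallelotope spanned by $\bfb_1,\dots,\bfb_{m-1}$ that contains it (so it floors the coefficients where you round them), and then applies the triangle inequality. Note that your $q\bfe_j+\bfu$ is exactly $(p/D^*)\bfe_j$, so the quotient--remainder split and the extra $+1$ in your bound are unnecessary, though harmless.
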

\begin{proof}

The affine hyperplane \[
W:=\{\bfw\in\RR^m:\det(\bfb_1,\bfb_2,\dots,\bfb_{m-1},\bfw)=p\}
\]
contains points of $L$ (for example, $\bfb_m$). Therefore, it is partitioned into translates $\bfb+\mathcal{P}$, for $\bfb\in L\cap W$, of the $(m-1)$-dimensional parallelotope 
\[
\mathcal{P}:=\{r_1\bfb_1+\dots+r_{m-1}\bfb_{m-1}: 0 \leq r_1,\dots,r_{m-1}< 1\}.
\]
Any $\bfb\in L\cap W$ forms a basis for $L$ with $\bfb_1,\dots,\bfb_{m-1}$, because $\det(\bfb_1,\bfb_2,\dots,\bfb_{m-1},\bfb)=p$. So to prove the lemma, it suffices to find some $\bfb^*\in L\cap W$ satisfying the desired inequality.

Let $\bfu:=(0,0,\dots,p/D^*,\dots, 0,0)$, with the nonzero coordinate corresponding to the coefficient $D^*$. We have $\bfu\in W$ because
$$\det(\bfb_1,\bfb_2,\dots,\bfb_{m-1},\bfu)=D_1\cdot0 + \dots + D^*\cdot \frac p{D^*} +\dots+ D_m\cdot 0 = p.$$ Therefore, $\bfu$ is in $\bfb^*+\mathcal{P}$ for some $\bfb^*\in L\cap W$.
See Figure~\ref{fig:tiling}. Then
\[
\bfu = \bfb^* + \sum_{i=1}^{m-1}r_i^*\bfb_i
\]
with all the $r_i^*$'s satisfying $0\leq r_i^* < 1$. So, by the triangle inequality,

\[\|\bfb^*\|_1\leq\| \bfu\|_1+\sum_{i=1}^{m-1}\|\bfb_i\|_1\leq\frac p {|D^*|}+{O}(\sqrt p).\qedhere \]
\end{proof}

We are ready to assemble the proof of theorem~\ref{thm:gen-deg-lattice}.

\begin{proof}[Proof of theorem~\ref{thm:gen-deg-lattice}]

When $D^*>\lceil m/2\rceil$, $\|\bfu\|_1$ is less than $  p/\lceil m/2 \rceil$ by a fixed fraction of $p$, which eclipses $ O(\sqrt p)$ for sufficiently large $p$, so Lemma~\ref{lem:point_in_L} furnishes us with a basis in the open $L^1$ ball of radius $\lceil p / \lceil m/2\rceil \rceil$.

Additionally, for sufficiently large $p$, $\bfieldr=\|\bfb^*\|_1$.
We can always choose a $D^*>\lceil m/2\rceil$ except in the case of the lattices discussed in the proof of Proposition~\ref{prop:sharp-case}, so this gives us the bound $\bfieldr <\lceil p/{\lceil m/2 \rceil} \rceil$ for sufficiently large $p$ when $L$ is not such a lattice.

When $L$ is one of the lattices discussed in the proof of Proposition~\ref{prop:sharp-case}, then that proposition gives the equality $\bfieldr =\lceil p/{\lceil m/2 \rceil} \rceil$.

We have now obtained the desired bound
    \[
    \bfieldr \leq \left\lceil\frac{p}{\lceil m/2\rceil}\right\rceil,
    \]
with equality only in the case that $L$ is one of the lattices described in the proof of Proposition~\ref{prop:sharp-case}. This completes the proof.
\end{proof}

\begin{proof}[Proof of Theorem~\ref{thm:gen-deg}]
    Theorem~\ref{thm:gen-deg} now follows from Theorem~\ref{thm:gen-deg-lattice} by Lemma~\ref{lem:lattice-knows-all} and Lemma~\ref{lem:trivial-and-duplicate}, as discussed at the beginning of this section.
\end{proof}

\begin{figure} 
    \centering

\tdplotsetmaincoords{75}{42}
\begin{tikzpicture}
        [tdplot_main_coords,
        axis/.style={->,line width=0.4mm},
        axissegment/.style={line width=0.4mm},
        plane2/.style={opacity=.45, shading = axis, left color={rgb,255:red,62; green,63; blue,149}, right color={rgb,255:red,172; green,163; blue,255}},
        plane1/.style={opacity=.35, shading = axis, left color={rgb,255:red,64; green,124; blue,95}, right color={rgb,255:red,154; green,234; blue,205}},
        plltope1/.style={pattern={Lines},pattern color=red,dashed,opacity=.95},
        plltope2/.style={pattern={Lines},pattern color=orange,dashed,opacity=.9},
        basis/.style={-{Stealth[round]}, color={rgb,255:red,255; green,0; blue,0}, line width=0.45mm}]
        
	\draw[axissegment] (-6,0,0) -- (-17/6,0,0);
        \draw[axis] (0,0,17/4) -- (0,0,7.5) node[anchor=west]{$z$};

\foreach \point in {(-4,-2,2), (-4,2,3), (-4,6,4), (-3,-4,3), (-3,0,4), (-3,4,5), (-2,-2,5), (-2,2,6), (-1,-4,6)}
       {\node[circle, fill, opacity=.5, color={rgb,255:red,110; green,110; blue,110}, inner sep=1.45] at \point {};
}

    \draw[plane1] (3.413,-4.32,8.28) -- (1.58,6.68,8.28) -- (-5.753,6.68,-2.72) -- (-3.92,-4.32,-2.72) -- cycle;

        \foreach \point in {(-4,-1,-2), (-4,3,-1), (-3,-3,-1), (-3,1,0), (-3,5,1), (-2,-1,1), (-2,3,2), (-1,-3,2), (-1,1,3), (-1,5,4), (0,-1,4), (0,3,5), (1,-3,5), (1,1,6)}
       {\node[circle, fill, color={rgb,255:red,80; green,110; blue,30}, inner sep=1.5] at \point {};
}

    \draw[axissegment] (-17/6,0,0) -- (0,0,0);
    \draw[axissegment] (0,-6,0) -- (0,0,0);
    \draw[axissegment] (0,0,0) -- (0,0,17/4);

\foreach \point in {(-17/6,0,0)}
       {\node[circle, fill, color=blue, inner sep=1.5] at \point {};
}

\draw[plltope2] (-4,-1,-2) -- (-2,-1,1) -- (-1,1,3) -- (-3,1,0) -- cycle;
\draw[line width=0.3mm] (-4,-1,-2) -- (-2,-1,1);
\draw[line width=0.3mm] (-4,-1,-2) -- (-3,1,0);

    \draw[plane2] (5.333,-4,7) -- (3.5,7,7) -- (-3.833,7,-4) -- (-2,-4,-4) -- cycle;

        \foreach \point in {(-3,2,-4), (-3,6,-3), (-2,0,-3), (-2,4,-2), (-1,-2,-2), (-1,2,-1), (-1,6,0), (0,0,0), (0,4,1), (1,-2,1), (1,2,2), (1,6,3), (2,0,3), (2,4,4), (3,-2,4), (3,2,5), (4,0,6)}
       {\node[circle, fill, color={rgb,255:red,80; green,30; blue,110}, inner sep=1.5] at \point {};
}

    \draw[axis] (0,0,0) -- (8,0,0) node[anchor=west]{$x$};
    \draw[axis] (0,0,0) -- (0,8,0) node[anchor=west]{$y$};
    \draw[axissegment] (0,0,-5) -- (0,0,0);

    \draw[plltope1] (0,0,0) -- (2,0,3) -- (3,2,5) -- (1,2,2) -- cycle;

    \draw[basis] (0,0,0) -- (2,0,3);
    \draw[basis] (0,0,0) -- (1,2,2);

\foreach \point in {(0,1,-4), (0,5,-3), (1,-1,-3), (1,3,-2), (2,-3,-2), (2,1,-1), (2,5,0), (3,-1,0), (3,3,1), (4,-3,1), (4,1,2), (4,5,3), (5,-1,3), (5,3,4), (6,-3,4), (6,1,5)}
       {\node[circle, fill, opacity=.5, color={rgb,255:red,80; green,80; blue,80}, inner sep=1.5] at \point {};
}

\foreach \point in {(2,6,-4), (3,0,-4), (3,4,-3), (4,-2,-3), (4,2,-2), (4,6,-1), (5,-4,-2), (5,0,-1), (5,4,0), (6,-2,0), (6,2,1)}
       {\node[circle, fill, opacity=.5, color={rgb,255:red,50; green,50; blue,50}, inner sep=1.5] at \point {};
}

\draw (-17/6,0,0) node [scale=0.8,outer sep=2.5,anchor= south west]{$\bfu$};
\draw (1,2,2) node [scale=0.8,outer sep=2.5,anchor= west]{$\bfb_1$};
\draw (2,0,3) node [scale=0.8,outer sep=2.5,anchor= south]{$\bfb_2$};
\draw (2.5,1,4) node [inner sep=5, anchor = south] {$\mathcal{P}$};
\draw (-4,-1,-2) node [scale=0.8,outer sep=2.5,anchor= west] {$\bfb^*$};
\draw (1.5,4,7.5) node [scale=0.8] {$W$};

\end{tikzpicture}
\caption{Illustration of the proof of Lemma~\ref{lem:point_in_L} in the case $m=3$. The affine hyperplane $W$ is the set $\{\bfw\in \RR^3:\det(\bfb_1,\bfb_2,\bfw)=p\}$. It is partitioned into translates of the parallelogram $\mathcal P$ spanned by $\bfb_1,\bfb_2$ by points of $L$ lying in $W$. One of these translates, $\bfb^*+\mathcal P$ with $\bfb^*\in L\cap W$, contains the point $\bfu$ where $W$ intersects an axis with controlled $L^1$ norm---specifically, $\|\bfu\|_1 = |p/D^*|$, where $D^*$ is some coefficient of the equation for $L$ that exceeds $\lceil m/2 \rceil$ in absolute value. The point $\bfb^*$ of $L$ forms a basis with $\bfb_1,\bfb_2$ and has controlled $L^1$ norm.}
\label{fig:tiling}
\end{figure}

The bound proven in Theorem~\ref{thm:gen-deg} falls short of the conjectured bound \cite[Conjecture~5.1]{first-beam-paper} in two respects: it concerns $\bfieldr$ rather than $\bfield$, and holds only for $p$ in the regime $p\gg m$. We now address the first of these defects, upgrading the result to $\bfield$ if we assume that the isotypic components in the representation $V$ come in dual pairs; this is automatic if, for example, $\kk=\RR$ (the context relevant ot the signal processing application discussed in the introduction). 

\begin{theorem}\label{thm:real-reps-bfield}
Let $G=\ZZ/p\ZZ$, $p$ an odd prime, and let $\kk$ be a field of characteristic coprime to $p$. Let $V$ be a representation of such that the $m$ nontrivial isotypic components of $V$ (after base-changing to a field containing distinct $p$th roots of unity) come in dual pairs. Then, for $p\gg m$, we have
\[
\bfield(G,V) \leq \left\lceil \frac{p}{m/2} \right \rceil.
\]
In particular, \cite[Conjecture~5.1]{blum2024degree} holds under the hypotheses that $\kk=\RR$ and $p\gg m$.
\end{theorem}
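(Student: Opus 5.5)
The plan is to deduce the bound from Theorem~\ref{thm:gen-deg} by converting a generating set of Laurent monomials into a generating set of honest polynomial monomials of no larger degree. The conversion uses the fact that the characters come in inverse pairs.

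\textbf{Reductions.} By \cite[Lemma~2.1]{first-beam-paper}, $\bfield$ is unchanged by base change, so I may replace $\kk$ by an extension containing distinct $p$th roots of unity and diagonalize as in Convention~\ref{con:hypothesis-on-k}. By Lemma~\ref{lem:trivial-and-duplicate}, I may delete trivial and duplicate characters. The resulting set of characters is still closed under inversion. Since $p$ is odd, no nontrivial character of $\ZZ/p\ZZ$ is its own inverse. So the $m$ coordinates split into disjoint pairs $\{i,i'\}$ with $A_{i'}\equiv -A_i \pmod p$, and $m$ is even, so $\lceil m/2\rceil = m/2$. By \eqref{eq:lattice-equation}, each vector $\bfe_i+\bfe_{i'}$ lies in $L=L(G,V)$. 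These vectors lie in $\NN^m$ and have $L^1$ norm $2$.

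\textbf{Folding negative coordinates.} Theorem~\ref{thm:gen-deg}, via Lemma~\ref{lem:lattice-knows-all}, gives for $p\gg m$ a generating set $\{\bfv\}$ of $L$ in $\ZZ^m$ with every $\|\bfv\|_1\le d:=\lceil p/(m/2)\rceil$. For each such $\bfv$ and each pair $\{i,i'\}$, with $(a,b)=(v_i,v_{i'})$, I add $c(\bfe_i+\bfe_{i'})$, where $c=\max(0,-a,-b)$. The resulting pair of coordinates is nonnegative, and its $L^1$ norm does not exceed $|a|+|b|$:
\begin{itemize}
\item if $a,b\ge 0$, nothing changes;
\item if exactly one of $a,b$ is negative, say $a<0\le b$, the new pair is $(0,\,b-a)$, of norm $|a|+|b|$;
\item if both are negative, the new pair is $(\,|a|-\min(|a|,|b|),\ |b|-\min(|a|,|b|)\,)$ up to order, of norm $\big||a|-|b|\big|\le |a|+|b|$.
\end{itemize}
Doing this for every pair yields a vector $\bfv'\in L\cap\NN^m$ with $\|\bfv'\|_1\le\|\bfv\|_1\le d$. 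Also $\bfv'\equiv\bfv$ modulo the span of the vectors $\bfe_i+\bfe_{i'}$.

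\textbf{Conclusion.} The set $\{\bfv'\}\cup\{\bfe_i+\bfe_{i'}\}$ lies in $\NN^m$. It generates $L$, since each original $\bfv$ is recovered from $\bfv'$ and the pair vectors. Its elements have $L^1$ norm at most $\max(d,2)=d$; here $d\ge 2$ because $p>m$ forces $2p/m>2$. By Lemma~\ref{lem:lattice-knows-all}\ref{item:bfield-from-lattice}, $\bfield\le\lceil 2p/m\rceil$.

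For the final assertion, when $\kk=\RR$ every character $\chi$ appearing after complexification appears together with its complex conjugate $\bar\chi=\chi^{-1}$. So the dual-pairs hypothesis holds automatically, and since $m$ is even, $\lceil 2p/m\rceil=\lceil p/\lceil m/2\rceil\rceil$ is the conjectured bound.

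I do not expect a serious obstacle, since the heavy lifting is in Theorem~\ref{thm:gen-deg}. The one point needing care is the case analysis for the folding step, especially when both paired coordinates are negative, to make sure the norm never increases.
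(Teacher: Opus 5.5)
Your proposal is correct and follows essentially the same route as the paper: it reduces to the lattice statement, takes the generating set from Theorem~\ref{thm:gen-deg-lattice}, and pushes it into the nonnegative orthant by adding multiples of the pair vectors $\bfu_{ij}=\bfe_i+\bfe_j$. Your shift $c=\max(0,-a,-b)$ is exactly the paper's $c_{ij}(\bfa)=-\min(a_i,a_j,0)$. Your explicit case check that the $L^1$ norm does not increase, and the remark that $p$ odd makes the pairs disjoint, fill in details the paper leaves implicit.
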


\begin{proof}
By Lemmas~\ref{lem:lattice-knows-all} and \ref{lem:trivial-and-duplicate}, and the fact that $\bfield$ is insensitive to base change \cite[Lemma~2.1]{first-beam-paper}, it is equivalent to show the following claim: Let $L$ be a lattice defined by an equation $A_1a_1+\dots+A_ma_m=0\pmod p$ with coefficients distinct and nonzero mod $p$, such that for all $i=1,\dots,m$, there exists $j$ with $A_i+A_j=0\pmod p$. Then, if $p$ is sufficiently large depending only on $m$, $L$ contains a generating set lying in the intersection of the nonnegative orthant with the $L^1$-norm ball of radius $\left\lceil{2p}/m \right \rceil$. 

By Theorem~\ref{thm:gen-deg-lattice}, under these hypotheses there is a generating set for $L$ lying in the $L^1$-norm ball of radius $\left\lceil{2p}/m \right \rceil$, but not necessarily lying in the first orthant. However, for every pair $i,j$ with $A_i+A_j=0\pmod p$, $L$ also contains the point $\bfu_{ij}:=\bfe_i+\bfe_j$, which lies in the nonnegative orthant and has $L^1$-norm $2 < \lceil 2p/m\rceil$. Given any point $\bfa=(a_1,\dots,a_m)\in L$, let $c_{ij}(\bfa)=-\min(a_i,a_j,0)$. Then $\bfa+c_{ij}(\bfa)\bfu_{ij}$ is in $L$, is nonnegative in the $i$th and $j$th coordinates, and satisfies
\[
\|\bfa+c_{ij}(\bfa)\bfu_{ij}\|_1 \leq \|\bfa\|_1.
\]
Furthermore, $\bfa$ is generated by $\bfu_{ij}$ and $\bfa + c_{ij}(\bfa)\bfu_{ij}$. We can repeat this process with every inverse pair of coefficients, yielding the point
\[
\bfa' := \bfa + \sum_{\{i,j\}}c_{ij}(\bfa)\bfu_{ij},
\]
where the sum is taken over all the pairs of indices $\{i,j\}$ with $A_i+A_j=0\pmod p$. The hypothesis is that these pairs partition the full index set, so that $\bfa'$ lies in the nonnegative orthant, and again $\|\bfa'\|_1\leq \|\bfa\|_1$ and $\bfa$ is generated by $\bfa'$ and the $\bfu_{ij}$'s. 

This device allows the known generating set $\bfa_1,\dots,\bfa_m$ for $L$ satisfying the desired bound to be pushed into the nonnegative orthant, yielding a set $\bfa_1',\dots,\bfa_m'$ that, together with the $\bfu_{ij}$'s, generates $L$ and satisfies the desired bound.
\end{proof}

\begin{remark}
    The possibility of deducing Theorem~\ref{thm:real-reps-bfield} from Theorem~\ref{thm:gen-deg} was suggested by \cite[Proposition~3.1]{blum2024degree}, which states that $\bfield=\bfieldr$ if $\kk$ is a field whose only roots of unity are $\pm 1$ (such as $\RR$). Something like the proof given here is needed, however; Theorem~\ref{thm:real-reps-bfield} does not follow straight from Theorem~\ref{thm:gen-deg} by way of \cite[Proposition~3.1]{blum2024degree}   because Theorem~\ref{thm:gen-deg} requires that $\kk$ contain $p$th roots of unity and base-change can lower $\bfieldr$.
\end{remark}

\begin{remark}
    The proof of Theorem~\ref{thm:gen-deg} via Theorem~\ref{thm:gen-deg-lattice} actually finds a basis for the lattice $L(G,V)$ satisfying the desired bounds, not just a generating set. Applying $\exp$ (as in Notation~\ref{not:log-exp}), we get an {\em algebraically independent} generating set for $\kk(V)^G$ satisfying a certain degree bound, provided the ground field contains enough roots of unity. From this point of view, Theorem~\ref{thm:gen-deg} may be regarded as a result in the ``constructive approach to Noether's problem" \cite{kemper1996constructive}, which seeks explicit algebraically independent generating sets for fields that have them. On the other hand, Theorem~\ref{thm:real-reps-bfield} only provides a generating set for $L(G,V)$, not a basis, due to the use of the $\bfu_{ij}$'s, so the resulting generating set for $\kk(V)^G$ is not algebraically independent.
\end{remark}

The proof of Theorem~\ref{thm:gen-deg} requires the primality of $p$. Particularly, without this hypothesis, the proof would fail in Lemma~\ref{lem:largest_coordinate_bound}, as $L \subseteq \Lambda \subseteq \ZZ^m$ does not necessarily imply $[\Lambda:L]=1$ or $[\Lambda:L]=p$ if $p$ is composite. The question was posed in \cite[Question~5.1]{first-beam-paper}  whether the conjectured bound in \cite[Conjecture~5.1]{first-beam-paper} holds if $p$ is composite. We give a counterexample, showing that it does not hold for $\bfieldr$, and therefore also does not hold for $\bfield$ since $\bfieldr\leq\bfield$.
\begin{proposition}
For a lattice $L^*$ in dimension $m=5$ defined by an equation of the form $$a_1 + \left(\frac{n}{2}-1\right)a_2 + \left(\frac{n}{2}\right)a_3 +\left(\frac{n}{2} + 1\right)a_4 + \left(n-1\right)a_5 = 0 \pmod n,$$ where $n$ is even, we have
    \[
    \bfield(L^*)\geq \frac{n}{2}>\left\lceil\frac{n}{\lceil m/2\rceil}\right\rceil.
    \]
\end{proposition}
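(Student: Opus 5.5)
The plan is to exhibit a linear functional that vanishes on every short vector of $L^*$ but not on all of $L^*$. Then the short vectors generate a proper sublattice, which forces $\bfieldr(L^*)\geq n/2$. Since $\bfieldr\leq\bfield$ (a generating set in $\NN^N$ is in particular one in $\ZZ^N$, by Lemma~\ref{lem:lattice-knows-all}), this gives the bound for $\bfield(L^*)$, and in fact also for $\bfieldr$, as the surrounding text promises.

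First I reduce the defining equation modulo $n/2$. The coefficients $1,\ n/2-1,\ n/2,\ n/2+1,\ n-1$ become $1,-1,0,1,-1$ modulo $n/2$. So every $\bfa\in L^*$ satisfies
\[
\ell(\bfa):=a_1-a_2+a_4-a_5\equiv 0 \pmod{n/2}.
\]
Now let $\bfa\in L^*$ have $\|\bfa\|_1<n/2$. Then $|\ell(\bfa)|\leq\|\bfa\|_1<n/2$, and $\ell(\bfa)$ is a multiple of $n/2$, so $\ell(\bfa)=0$. Hence all points of $L^*$ of $L^1$-norm $<n/2$, in any orthant, lie in the sublattice $M:=L^*\cap\ker\ell$.

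Next I check that $M\neq L^*$. The point $n\bfe_1$ lies in $L^*$, since its equation value is $n\equiv 0\pmod n$, while $\ell(n\bfe_1)=n\neq 0$. So $L^*$ is not generated by its points of norm $<n/2$, which gives $\bfieldr(L^*)\geq n/2$ and therefore $\bfield(L^*)\geq n/2$.

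It remains to compare with $\lceil n/\lceil m/2\rceil\rceil=\lceil n/3\rceil$, since $m=5$. The inequality $n/2>\lceil n/3\rceil$ holds for even $n\geq 6$; for example $n=6$ gives $3>2$. It fails at $n=4$, where both sides equal $2$. At $n=4$ the coefficients also collapse to $1,1,2,3,3$, which are not distinct, so that case is degenerate anyway. I will therefore state the hypothesis $n\geq 6$, or note that for $n\geq 6$ the five coefficients are distinct and nonzero mod $n$, so that $L^*$ genuinely comes from a representation with $m=5$ isotypic components.

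There is no real obstacle. The only delicate point is spotting the functional $\ell$ coming from the reduction mod $n/2$; once it is found, the rest is a one-line norm estimate.
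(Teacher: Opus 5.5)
Your proof is correct and is essentially the paper's argument. The paper obtains the same functional $a_1-a_2+a_4-a_5$ as $\tfrac12\mathbf{D}$, where $\mathbf{D}=(2,-2,0,2,-2)$ comes from $\det(\bfb_1,\bfb_2,\bfb_3,\bfb_4,\bfw)$ for four explicit points of $L^*$ together with the index-$n$ argument of Lemma~\ref{lem:cross_product_0modp}; your reduction of the congruence mod $n/2$ reaches $\ell\equiv 0\pmod{n/2}$ more directly, and your witness $n\bfe_1$ makes explicit the paper's remark that a generating set must leave the hyperplane.

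Your caveat is also right. The final strict inequality $n/2>\lceil n/3\rceil$ fails for $n=2$ and $n=4$, where the coefficients are not even distinct mod $n$, so the statement implicitly needs $n\geq 6$.
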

\begin{proof}
The lattice $L^*$ contains the linearly independent points $$\bfb_1:=(0,1,0,1,0),\;\bfb_2:=(0,0,1,1,1),\;\bfb_3:=(1,1,1,0,0),\;\bfb_4:=(0,0,2,0,0)$$ which span the hyperplane $$W:=\{\bfw\in\RR^5:\det(\bfb_1,\bfb_2,\bfb_3,\bfb_4,\bfw)=0\}.$$

If we let $\mathbf{D}\in \ZZ^5$ be the unique point such that $\mathbf{D}\cdot \bfw = \det(\bfb_1,\bfb_2,\bfb_3,\bfb_4,\bfw)$ for all $\bfw\in \RR^5$, then $\mathbf{D}=(2,-2,0,2,-2)$. For all $\bfw\in L^*$, we have $\mathbf{D}\cdot\bfw=0\pmod n$, with $\mathbf{D}\cdot\bfw=0$ only in the case that $\bfw\in L^*\cap W$. Any generating set for $L^*$ must contain at least one point $\bfw\notin W$. We can call such a point $\bfw^*$.

We now know that $|\mathbf{D}\cdot \bfw^*|\geq n$. Since $\mathbf{D}=(2,-2,0,2,-2),$ we can write this as $|2\cdot w^*_1 -2\cdot w^*_2+2\cdot w^*_4-2\cdot w^*_5|\geq n$. Rearrangement leads to $|w^*_1 -w^*_2+w^*_4-w^*_5|\geq n/2$. Since $\|\bfw^*\|_1\geq|w^*_1 -w^*_2+w^*_4-w^*_5|$, it follows that 
$$\bfieldr\geq\|\bfw^*\|_1\geq n/2.$$
Therefore,
\[
 \bfield\geq\bfieldr\geq n/2 > \left\lceil\frac{n}{\lceil m/2\rceil}\right\rceil.\pighere
\]
\end{proof}

\begin{remark}
The desired bound on $\bfieldr$ was proven for fixed dimension $m$ and sufficiently large $p$ by making use of the bound $\|\bfb_i\|_1 \leq O(\sqrt{p})$ established in Lemma \ref{lem:basis_points_exist}. This asymptotic bound does not indicate how large $p$ has to be for the proof to go through. It is possible to make Theorem~\ref{thm:gen-deg} effective via the following more precise bounds on the $L^1$ norms of the points $\bfb_1,\dots,\bfb_{m-1}$ used in the proof, which can be extracted from Minkowski's Second Theorem without a great deal of additional work. (These are relevant to finding the regime in which the proof strategy succeeds, but are not expected to be a comment on $\bfield$ itself since the bound in Theorem~\ref{thm:gen-deg} is conjectured to hold without the hypothesis $p\gg m$.)

\begin{proposition}\label{prop:precise_basis_bounds}
The basis $\bfb_1,\bfb_2,\dots,\bfb_{m-1},\bfb_m$ of Lemma \ref{lem:basis_points_exist} can be chosen so that $$\|\bfb_i\|_1 \leq i\left(\frac{m! p}{2^{i-1}}\right)^{\frac{1}{m-i+1}}$$ for $i=1,\dots,\lfloor\frac{m}{2} \rfloor$, and 
$$\|\bfb_i\|_1 \leq i\left({\frac{m! p}{2^{\lfloor \frac{m}{2}\rfloor} 3^{i-1-\lfloor\frac{m}{2} \rfloor}}}\right)^{\frac 1 {m-i+1}}$$ for $i=\lfloor\frac{m}{2} \rfloor+1,\dots, m-1$

\end{proposition}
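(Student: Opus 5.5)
We keep the basis from Mahler's theorem used in Lemma~\ref{lem:basis_points_exist}, which satisfies $\|\bfb_i\|_1\leq i\lambda_i$. It therefore suffices to bound each successive minimum $\lambda_i$ for $i\leq m-1$ by the parenthesized quantity raised to the power $1/(m-i+1)$. The tool is Minkowski's Second Theorem in the form $\lambda_1\cdots\lambda_m\leq m!p$, combined with \emph{improved lower bounds} on the early minima. Earlier we used only $\lambda_j\geq 1$; here we will use that $L$ contains no vectors of very small $L^1$ norm.

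The key observation concerns short vectors of $L$.
\begin{itemize}
\item Since $V$ has no trivial characters, Observation~\ref{obs:points-from-triv-and-identical} shows that $L$ contains no $\pm\bfe_i$.
\item Every nonzero integer vector of $L^1$ norm $1$ is some $\pm\bfe_i$, so $\lambda_1\geq 2$.
\item A vector of $L^1$ norm $2$ is, up to sign, of the form $2\bfe_i$, $\bfe_i+\bfe_j$, or $\bfe_i-\bfe_j$. The last is excluded because $V$ has no duplicate characters. The first is excluded because $2A_i\equiv 0\pmod p$ would force $A_i\equiv0$, as $p$ is odd.
\item Hence every norm-$2$ vector of $L$ is $\pm(\bfe_i+\bfe_j)$ with $A_j\equiv -A_i$. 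Since the $A$'s are distinct, each index $i$ has at most one partner $j$. So the norm-$2$ vectors of $L$ have pairwise disjoint supports, and at most $\lfloor m/2\rfloor$ of them are linearly independent.
\end{itemize}
Consequently $\lambda_j\geq 2$ for $j\leq\lfloor m/2\rfloor$, and $\lambda_j\geq 3$ for $j>\lfloor m/2\rfloor$. The second bound holds because the span of all lattice vectors of norm $\leq 2$ has dimension at most $\lfloor m/2\rfloor$, and norms of integer vectors are integers.

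Now fix $i\leq m-1$. Bound the product from below by
\[
\lambda_1\cdots\lambda_{i-1}\cdot\lambda_i^{\,m-i+1}\;\leq\;\lambda_1\cdots\lambda_m\;\leq\; m!p,
\]
using $\lambda_k\geq\lambda_i$ for $k\geq i$. Then insert the lower bounds for $\lambda_1,\dots,\lambda_{i-1}$.
\begin{itemize}
\item For $i\leq\lfloor m/2\rfloor$, each of the first $i-1$ factors is at least $2$. This gives $\lambda_i^{m-i+1}\leq m!p/2^{i-1}$.
\item For $i>\lfloor m/2\rfloor$, the first $\lfloor m/2\rfloor$ factors are at least $2$ and the remaining $i-1-\lfloor m/2\rfloor$ factors are at least $3$. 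This gives the stated denominator $2^{\lfloor m/2\rfloor}3^{i-1-\lfloor m/2\rfloor}$.
\end{itemize}
Taking $(m-i+1)$th roots and multiplying by $i$ via Mahler's inequality \eqref{eq:mahler} yields both displayed bounds.

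The determinant normalization $\det=p$ is unaffected, since signs can still be flipped as before. The only step needing real care is the classification of norm-$2$ lattice vectors, in particular ruling out $2\bfe_i$ using that $p$ is odd, and the dimension count for the span. I expect that dimension count to be the main point. It follows from the disjoint-support argument: distinct inverse pairs $\{i,j\}$ are disjoint because each character has a unique inverse and the characters are distinct.
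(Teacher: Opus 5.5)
Your proposal is correct and follows essentially the same route as the paper. Both arguments establish $\lambda_i\geq 2$ for $i\leq\lfloor m/2\rfloor$ and $\lambda_i\geq 3$ for larger $i$, insert these factor-by-factor into Minkowski's bound $\lambda_1\cdots\lambda_m\leq m!p$, and then apply Mahler's inequality \eqref{eq:mahler}. The only difference is that you derive the short-vector facts (no norm-$1$ points, and norm-$2$ points spanning at most $\lfloor m/2\rfloor$ dimensions) directly from the defining congruence, whereas the paper cites them from earlier work; your phrasing in terms of linear independence is exactly what the successive-minima argument needs.
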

\begin{proof}
As discussed in the remark on page $11$ of \cite{blum2024degree}, using the arguments made in the proofs of \cite[Proposition~3.5]{first-beam-paper} and \cite[Proposition~4.3]{blum2024degree}, we can obtain the result that $L$ has no points of $L^1$ norm $1$, and no more than $\lfloor m/2\rfloor$ points of $L^1$ norm $2$. Therefore, $\lambda_i\geq2$ for $i=1,\dots,\lfloor\frac{m}{2} \rfloor$, and $\lambda_i\geq3$ for $i=\lfloor\frac{m}{2} \rfloor+1,\dots, m-1$.

Using this, when $i=1,\dots,\lfloor\frac{m}{2} \rfloor$, we have the following factor-by-factor inequality:
$$2^{i-1}\lambda_i^{m-i+1}=2\cdot\ldots\cdot2\cdot\lambda_i\cdot \ldots\cdot\lambda_i\leq \lambda_1\cdot\ldots\cdot\lambda_{i-1}\cdot\lambda_i\cdot\ldots\cdot\lambda_m\leq m!p.$$
This yields the upper bound $$\lambda_i \leq \left(\frac{m! p}{2^{i-1}}\right)^{\frac{1}{m-i+1}}.$$
Since $\|\bfb_i\|_1 \leq i\lambda_i$ by \eqref{eq:mahler}, for $i=1,\dots,\lfloor\frac{m}{2} \rfloor$ we have
\[
\|\bfb_i\|_1 \leq i\left(\frac{m! p}{2^{i-1}}\right)^{\frac{1}{m-i+1}}.
\]
Similarly, in the case that $i=\lfloor\frac{m}{2} \rfloor+1,\dots, m-1$, we can utilize the inequalities $2\leq \lambda_1,\dots,\lambda_{\lfloor m/2\rfloor}$, $3\leq \lambda_{\lfloor m/2\rfloor+1}, \dots, \lambda_{i-1}$, and $\lambda_i\leq \lambda_{i+1},\dots,\lambda_m$ to obtain the inequality
$$2^{\lfloor\frac{m}{2}\rfloor}3^{i-1-\lfloor\frac{m}{2}\rfloor}\lambda_i^{m-i+1}\leq m!p.$$
Rearrangement leads to 
$$\lambda_i^{m-i+1}\leq \frac{m!p}{2^{\lfloor\frac{m}{2}\rfloor}3^{i-1-\lfloor\frac{m}{2}\rfloor}}.$$

Therefore, 
\[
\lambda_i \leq \left({\frac{m! p}{2^{\lfloor \frac{m}{2}\rfloor} 3^{i-1-\lfloor\frac{m}{2}\rfloor}}}\right)^{\frac 1 {m-i+1}}.
\]
Since $\|\bfb_i\|_1 \leq i\lambda_i$ by \eqref{eq:mahler}, for $i=\lfloor\frac{m}{2} \rfloor+1,\dots, m-1$ we have
\[
\|\bfb_i\|_1 \leq i\left({\frac{m! p}{2^{\lfloor \frac{m}{2}\rfloor} 3^{i-1-\lfloor\frac{m}{2}\rfloor}}}\right)^{\frac 1 {m-i+1}}.\qedhere
\]
\end{proof}

\end{remark}

\section{Bound on $\Dspan$}\label{sec:HRD}

The aim of this section is to prove the following theorem on $\Dspan$:

\begin{theorem}\label{thm:Dspan-main}
    Let $G$ be a finite group. Let $V$ be a faithful representation of $G$ over a field $\kk$ of characteristic prime to the order of $G$. Then
    \[
    \Dspan(G,V) \leq |G|/2
    \]
    except in the special case where $V$ is a scalar representation or a direct sum of a scalar and a trivial representation.
\end{theorem}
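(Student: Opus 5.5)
Following the outline in the introduction, I will split into three cases according to whether $G$ is cyclic, dicyclic, or neither. Throughout I may base change to $\overline\kk$ by \cite[Lemma~2.2]{bbs-derksen}. In the abelian case I will also use Lemma~\ref{lem:trivial-and-duplicate} to discard trivial and duplicate characters.

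\textbf{Neither cyclic nor dicyclic.} Here I would use $\Dspan\leq\topdeg$. By \cite[Theorem~1]{kohls2014top}, $\topdeg(G,V)$ is bounded in terms of Noether-type numbers of proper subgroups and quotients. The results of \cite{cziszter-domokos, cziszter2014noether} give $\beta(G)\leq |G|/2$ for noncyclic $G$, with the appropriate refinement of the generalized Noether number. Combining these should give $\topdeg\leq|G|/2$ when $G$ is not cyclic or dicyclic. This step is essentially bookkeeping: I need to match Kohls' inequality to the exact form of the Cziszter--Domokos bounds, including the exceptional dicyclic family.

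\textbf{Cyclic $G$ of order $n$.} Lemma~\ref{lem:lattice-knows-all}(3) reduces the claim to a statement about the lattice defined by $\sum A_ia_i\equiv 0\pmod n$. I must show that every residue class mod $n$ is hit by some $\sum A_ia_i$ with $a\in\NN^m$ and $\sum a_i\leq n/2$. The exclusions say that $V$ is not, up to trivial and duplicate characters, a single faithful character. Consider first $m=2$ with two distinct characters, not both trivial, and faithful jointly. I would prove a lattice statement (the $m=2$ theorem advertised in Section~\ref{sec:pf-of-HRD-main}) by convexity. The points $(a_1,a_2)\in\NN^2$ with $a_1+a_2\leq d$ form a triangle containing about $d^2/2$ points. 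I would show these meet all $n$ cosets once $d=\lfloor n/2\rfloor$, unless $A_1$ or $A_2$ is $0$ or $A_1=A_2$ mod $n$ (Observation~\ref{obs:points-from-triv-and-identical-2}). The argument would take a short vector of $L$ and show that the translates of the triangle by lattice points tile efficiently. The general case then bootstraps from $m=2$. Pick two characters $x_1,x_2$ generating a subgroup $H\leq\widehat G$ of order $k$, and cover $H$ in degree $\leq k/2$, or $k-1$ if $H$ is generated by one of them alone. Then cover the cosets of $H$ using the remaining characters, recursing through the lattice of subgroups of $\widehat G\cong\ZZ/n$. Additivity of degrees gives estimates like $d(H)+ (n/k-1)$-type sums, which must be shown to be at most $n/2$. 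I expect this combinatorial estimate over chains of subgroups (divisors of $n$) to be the main obstacle, especially when every pair of characters generates a small subgroup and the whole group is reached only through long chains. My first attempt would be an induction on the number of prime factors of $n$. It would use $d(K)\leq d(H)+d(K/H)$ together with $ab/2\geq a/2+b-1$ for $a,b\geq2$.

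\textbf{Dicyclic $G$.} Here I would use Lemma~\ref{lem:Dspan-via-counting-embeddings} and Observation~\ref{obs:matrix-test-of-li}. First, for a faithful $2$-dimensional irreducible $V$, I would compute $\Dspan$ explicitly. This means exhibiting, in low degree, two independent embeddings of each $2$-dimensional irreducible and an embedding of each linear character, checking nonsingularity of the $2\times 2$ matrices. Then I would extend to arbitrary faithful $V$ via \cite[Theorem~4.4]{bbs-derksen}, since $V$ surjects onto such a summand and $\Dspan$ can only decrease when passing to the larger representation. Any faithful representation of a dicyclic group contains a faithful irreducible summand, because the center has a unique element of order $2$.
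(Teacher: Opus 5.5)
\textbf{Dicyclic case: a genuine gap.} Your reduction assumes that every faithful representation of $Dic_{4n}$ has a faithful irreducible summand, ``because the center has a unique element of order $2$.'' That only guarantees that some summand is nontrivial on $Z=\langle\sigma^n\rangle$. Such a summand is forced to be faithful only when $Z$ is the unique minimal normal subgroup, i.e.\ when $n$ is a power of $2$.

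If $n$ has an odd prime factor, the claim is false. In $Dic_{12}$, let $W$ be the $2$-dimensional irreducible pulled back from $Dic_{12}/Z\cong S_3$ (kernel $Z$, order $2$), and let $\chi$ be the linear character with $\tau\mapsto i$ (kernel $\langle\sigma^2\rangle$, order $3$). Then $W\oplus\chi$ is faithful but has no faithful summand, so neither \cite[Theorem~4.4]{bbs-derksen} nor Lemma~\ref{lem:dicyclic} reaches it.

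The paper handles this as a separate case:
\begin{itemize}
\item pass to a minimal faithful decomposition $V=V_1\oplus\dots\oplus V_r$ with $r\ge2$;
\item show that every kernel $K_j$ lies in $\langle\sigma\rangle$;
\item use Lemma~\ref{lem:powerset-embedding} to get $2^{r-1}\le a_j=|K_j|\le n$;
\item compute $\Dspan(G_j,V_j)$ for each image: $n/a_j+1$ for dicyclic images (Lemma~\ref{lem:dicyclic}), $3$ for $C_4$, and $2n/a_j$ for dihedral images (Example~\ref{ex:sharp-dihedral});
\item bound each of these by $n/2^{r-2}$, then sum using the subadditivity $\Dspan(G,V)\le\sum_j\Dspan(G_j,V_j)$ of \cite[Proposition~4.6]{bbs-derksen}.
\end{itemize}
Your plan needs some argument of this kind.

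\textbf{The $m=2$ theorem.} This is the technical heart of the proof, and your sketch is only a placeholder. Counting points of the triangle is beside the point, since it has about $n^2/8\gg n$ points. ``Translates tile efficiently'' is not a mechanism either. What must be controlled is the $L^1$ norm of the outer corners of the staircase of minimal nonnegative coset representatives.

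The paper does this as follows. It takes the extremal lattice points $\bfH$ (second quadrant) and $\bfE$ (fourth quadrant) strictly above $a_2=-a_1$. It shows they form a basis unless equal-weight lattice points lie between them (Lemma~\ref{lem:H-E-basis}). It then applies Lemma~\ref{lem:staircase-lemma} to $\bfH,\bfH+\bfE,\bfE$, or to the lattice points on the segment in the degenerate case. Finally it bounds $E_1+E_2+H_2-2$ by one-variable convexity, using $E_1H_2-E_2H_1=n$, $E_1+H_1\ge0$, $E_1,H_2\ge2$, and positivity of the weights. You need an argument of that precision.

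\textbf{Cyclic bootstrap.} Granting the $m=2$ theorem, your route differs from the paper's and works, with no recursion needed.
\begin{itemize}
\item Take any two distinct nontrivial characters $\chi_1,\chi_2$ of $V$. Let $H=\langle\chi_1,\chi_2\rangle$, of order $k\ge3$, and let $K=H^\perp$.
\item Theorem~\ref{thm:HRD-main}, applied to $G/K$, gives every element of $H$ as a monomial in $x_1,x_2$ of degree $\le k/2$. Drop your ``$k-1$'' fallback: it is never needed, and for $\chi\oplus\chi^2$ with $\chi$ faithful it would give only $n-1$.
\item Since $V|_K$ is faithful, the remaining characters generate $\widehat K\cong\widehat G/H$. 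The zero-sum argument then gives each coset as a monomial of degree $\le n/k-1$.
\item Hence $\Dspan\le k/2+n/k-1$. Since $n/2-(k/2+n/k-1)=\frac12(k-2)(n/k-1)\ge0$, this is at most $n/2$; it is exactly your inequality with $a=k$, $b=n/k$.
\end{itemize}
This single filtration step replaces the paper's Cases~2 and~4, i.e.\ monotonicity plus Theorem~\ref{thm:HRD-main}, and minimal faithful decompositions with Lemma~\ref{lem:powerset-embedding}, $a_j\ge3\cdot2^{r-2}$ and subadditivity.

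\textbf{Non-cyclic, non-dicyclic case.} This is the paper's argument, but the inputs need fixing. Use $\topdeg(G)=\beta(G)-1$ from \cite[Theorem~1]{kohls2014top}. The bound $\beta(G)\le|G|/2$ is false for noncyclic $G$; for example $\beta(D_{2n})=n+1$. What holds, and suffices, is $\beta(G)\le|G|/2+1$ when $G$ is neither cyclic nor dicyclic, with the groups $(\ZZ/3\ZZ)^2$, $(\ZZ/2\ZZ)^3$, $A_4$ and $\tilde A_4$ checked separately.
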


In the exceptional case that $V$ is a scalar or scalar plus trivial representation of a cyclic group, we have $\Dspan(G,V)=|G|-1$. Before discussing the proof, we give some examples where equality is realized in Theorem~\ref{thm:Dspan-main}, showing that it is sharp. One of them is used as a lemma, so we explicate the examples without relying on Theorem~\ref{thm:Dspan-main}.

\begin{example}[Cyclic group]\label{ex:sharp-cyclic}
    Let $G=C_n$, the cyclic group of order $n$, with $n$ even. 
    \begin{itemize}
        \item Let $\zeta = e^{2\pi i / n}\in \CC$, and let $V=\CC^2$, with a generator $g$ for $G$ acting by the matrix
    \[
    \begin{pmatrix}\zeta & \\ & \zeta^{-1}
    \end{pmatrix}.
    \]
    If $x_1,x_2$ are coordinate functions, then a minimal-degree polynomial basis for $\CC(V)$ over $\CC(V)^G$ is given by 
    \[
    x_2^{n/2-1},\dots,x_2,1,x_1\dots,x_1^{n/2},
    \]
    so $\Dspan(G,V)=n/2=|G|/2$. This can be seen by considering $L(G,V)$, for which the defining condition \eqref{eq:lattice-equation} takes the form
    \[
    a_1 - a_2 = 0\pmod n.
    \]
    The points $(0,n/2-1),\dots,(0,1),(0,0),(1,0),\dots,(n/2,0)$ ($=\log (x_2^{n/2-1}),\dots,\log(x_1^{n/2})$) represent every coset of $L(G,V)$ in $\ZZ^2$, and are each the (or, in the case of $(n/2,0)$, one of two) nonnegative point(s) of minimal $L^1$ norm in their respective cosets.
        \item With the same setup as in the previous bullet, let $V=\CC^2$, but this time with $g$ acting by the matrix
        \[
        \begin{pmatrix}
            \zeta & \\ & \zeta^2
        \end{pmatrix}.
        \]
        By an analysis of $L(G,V)$ similar to the above, it can be shown that a minimal-degree polynomial basis for $\CC(V)$ over $\CC(V)^G$ is given by
        \[
        x_1^ix_2^j,\; 0\leq i \leq 1,\; 0\leq j \leq n/2-1,
        \]
        so again $\Dspan(G,V) = n/2$.
    \end{itemize}

\end{example}

\begin{example}[Dihedral group]\label{ex:sharp-dihedral}
    Let $G=D_{2n}$, the dihedral group of order $2n$, and let it act on $V=\CC^2$ in its standard reflection representation. Then $G$ is a reflection group of type $I_2(n)$; it is well known that its degrees are $2,n$ (e.g., \cite[Table~3.1]{humphreys1992reflection}). So the Hilbert series of the coinvariant algebra $\CC[V]_G$ is
    \[
    H(\CC[V]^G,t) = (1+t)(1+\dots+t^{n-1}),
    \]
    e.g., by \cite[\S~23--1, Theorem~A]{kane2001reflection}. This is a polynomial of degree $(2-1)+(n-1)=n$, so $\Dspan(G,V)\leq \topdeg(G,V)=n$. By the same token, the lowest-degree appearance of the sign character in $\CC[V]$ is also in degree $(2-1)+(n-1)=n$, e.g., by \cite[Proposition~3.13]{humphreys1992reflection}. 
    Recall from the introduction that $\Dreg$ is the minimum degree $d$ such that $\CC[V]_{\leq d}$ contains a copy of the regular representation of $G$. Since the regular representation cannot be complete without the sign character of $G$, it follows that $n \leq \Dreg(G,V)$. Meanwhile, it follows from the Noether-Deuring Theorem and the Normal Basis Theorem that $\Dreg \leq \Dspan$; see \cite[Proposition~4.1]{bbs-derksen}.  Combining all this, we have
    \[
    n \leq \Dreg \leq \Dspan\leq\topdeg=n=|G|/2.
    \]
    The same argument shows in general that, for any finite reflection group, $\Dreg=\Dspan=\topdeg=\sum_j(d_j-1)$, the number of reflections (where the $d_j$ are the degrees); but it happens that for dihedral groups, exactly half of the elements in the group are reflections.
\end{example}

Theorem~\ref{thm:Dspan-main} is proven by a case analysis. The main case is when $V$ is cyclic and contains a faithful representation of dimension $\leq 2$. In this case, it will be deduced from the following theorem about lattices.

\begin{theorem}\label{thm:HRD-main}
    If  $L \subseteq \ZZ^2$ is a sublattice of index $n$, and it is not of the form  $\{(a_1,a_2)\in\ZZ^2:A_1a_1+A_2a_2=0\pmod n\}$ with either $A_1,A_2 \in \ZZ$ congruent to each other mod $n$ or one of them zero mod $n$, 
         then 
         \[
         \Dspanl \leq n/2.
         \]
\end{theorem}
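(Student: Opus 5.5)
The plan is to replace the coset-covering condition of Lemma~\ref{lem:lattice-knows-all}(\ref{item:Dspan-from-lattice}) by a statement about a staircase in $\NN^2$. Fix a graded monomial order $\prec$ on $\NN^2$: compare $a_1+a_2$ first, then break ties lexicographically. This order is additive, meaning $\bfx\prec\bfy$ implies $\bfx+\bfz\prec\bfy+\bfz$. Let $E\subseteq\NN^2$ be the set of points that are $\prec$-minimal in their coset of $L$.

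Two facts about $E$ come first. It has exactly $n$ elements, one per coset: every coset meets $\NN^2$, because $n\bfe_1,n\bfe_2\in L$, and $\prec$ is a well-order. It is also downward closed. Suppose $\bfx\in E$ and $\bfy\le\bfx$ componentwise but $\bfy\notin E$. Then there is $\bfy'\prec\bfy$ in the same coset, so $\bfx-\bfy+\bfy'\prec\bfx$ lies in the coset of $\bfx$, a contradiction. Since $\prec$ is graded, the minimal $L^1$ norm of a nonnegative representative of each coset is attained in $E$, so
\[
\Dspanl=\max\{a_1+a_2:(a_1,a_2)\in E\}.
\]
The theorem therefore reduces to: a staircase $E$ of this kind has no point of degree $>n/2$, unless we are in one of the excluded cases.

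Next I would determine the shape of $E$ explicitly. Let $r$ and $s$ be the orders of the images of $\bfe_1$ and $\bfe_2$ in $\ZZ^2/L$. Then the row of $E$ on the $a_1$-axis has length exactly $r$, and the column on the $a_2$-axis has length exactly $s$. The complement $\NN^2\setminus E$ is generated, as an upward-closed set, by its minimal elements, which are leading points of binomial relations $\bfx-\bfy\in L$ with $\bfy\prec\bfx$. I expect to show from rank-$2$ considerations, as for a Gr\"obner basis of a lattice ideal in two variables, that there is at most one interior corner. Then $E=([0,r)\times[0,s))\setminus([u,r)\times[v,s))$ for some $0<u\le r$ and $0<v\le s$, with
\[
n=rs-(r-u)(s-v),
\]
and the maximal-degree candidates are the two outer corners $(r-1,v-1)$ and $(u-1,s-1)$.

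The excluded forms correspond to degenerate shapes. If $\bfe_1\in L$ then $r=1$, and if $\bfe_2\in L$ then $s=1$; in either case $E$ is a single line and the maximal degree is $n-1$. If $\bfe_1-\bfe_2\in L$, then with our tie-break $\bfe_2\prec\bfe_1$, so $\bfe_1\notin E$ and again $E$ is a single column of degree $n-1$. Conversely, the hypothesis should force $r,s\ge 2$ and $\bfe_1\not\equiv\bfe_2$, hence $(1,0),(0,1)\in E$ in distinct cosets.

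The main obstacle is the final inequality. One must show $r-1+v-1\le n/2$ and $u-1+s-1\le n/2$, which does not follow from the area formula alone; a long arm such as $r\approx n-1$ with $v=1$ has to be ruled out. For this I would use the relation attached to the corner, for instance a lattice vector $(r,0)-(i,j)$ with $(i,j)\prec(r,0)$ and $j\ge 1$. This relation lets me fold the far half of a long row back onto shorter representatives in other rows. Concretely, $(a,0)$ and $(a-r+i,j)$ lie in the same coset, with norms $a$ and $a-r+i+j\le a$. I would use that an arm longer than $n/2$ forces its coset, via the gradedness and minimality of the relation, to coincide with a point of smaller degree, a contradiction. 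If this counting proves too delicate, my fallback is to allow the tie-break to be chosen per coset: compare the lex and reverse-lex staircases, and use that a point of degree $d$ in both forces at least $2d$ distinct cosets among lower-degree points. This would give $2d\le n$ directly. Example~\ref{ex:sharp-cyclic} shows the bound is tight, which is a useful check on the counting.
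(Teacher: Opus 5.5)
Your reduction is sound: the graded standard set $E$ has one point per coset, is downward closed, and $\Dspanl=\max_{\bfx\in E}(x_1+x_2)$. This is a different route from the paper. The paper never computes $E$; it bounds it from outside by the staircase cut out by $\bfH$, $\bfH+\bfE$, $\bfE$ (or by the lattice points on the segment from $\bfH$ to $\bfE$), via Lemma~\ref{lem:staircase-lemma}. It then uses the minimality of $\bfH,\bfE$ and the determinant.

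The genuine gap is that you never prove $\max(r+v-2,\,u+s-2)\le n/2$, and that inequality is the entire content of the theorem. As you note, it cannot come from the shape alone: the hook $r=n-1$, $s=2$, $u=v=1$ is a downward-closed system of representatives for $a_1-a_2\equiv0\pmod n$.

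Your folding step does nothing as stated. If $(r,0)\equiv(i,j)\in E$ with $i\ge1$, then $(r-1,0)$ and $(i-1,j)$ would be two distinct congruent points of $E$. So $i=0$, and $(a-r+i,j)=(a-r,j)$ has negative first coordinate for every $a<r$.

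Your fallback count is valid only when the top representative $\bfx$ has both coordinates positive. Then the box below $\bfx$ gives $(x_1+1)(x_2+1)\ge 2d$ cosets. It fails when $\bfx$ lies on an axis:
\begin{itemize}
\item For $a_1-a_2\equiv0\pmod n$ with $n$ odd, a top coset has the single minimal representative $((n-1)/2,0)$, whose downset supplies only $d+1$ cosets.
\item When the lex and reverse-lex representatives are $(d,0)$ and $(0,d)$, the two axis segments overlap modulo $L$ as soon as some $(k,-k)\in L$ with $0<k<d$. Compare the equal-weight configuration the paper must treat separately in Lemma~\ref{lem:if-not-basis}.
\end{itemize}

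The structural part also has an error and an unproved claim. The bottom row of $E$ does not have length equal to the order of $\bfe_1$ in $\ZZ^2/L$. For $a_1-a_2\equiv0\pmod n$ that order is $n$, but $(k,0)\equiv(0,n-k)$ cuts the row off near $n/2$. So $r,s$ must be the actual arm lengths of $E$, and $n=rs-(r-u)(s-v)$ is then just $|E|$.

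The one-interior-corner claim is true and short, but you do not prove it. A minimal generator $\mathbf m$ of $\NN^2\setminus E$ with positive coordinates must lie in $L$. Otherwise its representative in $E$ has some positive coordinate $i$, and subtracting $\bfe_i$ from both gives two distinct congruent points of $E$. Two such corners $(u_1,v_1),(u_2,v_2)$ with $u_1>u_2$, $v_1<v_2$ would then make $(u_1-1,0)\equiv(u_2-1,v_2-v_1)$, both in $E$.

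With this in place, your route can be finished:
\begin{itemize}
\item The translate $E+\ell$ ($\ell\in L$) containing $(r,v-1)$ must have $\ell=(r,v-s)$, so $(r,v-s)\in L$.
\item Outer corners with both coordinates positive are handled by the box count and Lemma~\ref{lem:alphabeta}.
\item At an axis corner, say $v=1$, the congruence $(r-1,0)\equiv(u-1,s)$ (using $(u,1),(r,1-s)\in L$) and $\prec$-minimality give $r\le u+s$. Then $2(r-1)\le us+r-u=n$ follows from $(u-1)(s-2)\ge0$. The case $u=1$ is symmetric.
\end{itemize}
None of this quantitative work appears in the proposal, so as written the bound is unproved.
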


The next subsection proves Theorem~\ref{thm:HRD-main}. The following subsection completes the proof of Theorem~\ref{thm:HRD-main}, using Theorem~\ref{thm:HRD-main} for the main case.

\subsection{Proof of Theorem~\ref{thm:HRD-main}}\label{sec:pf-of-HRD-main}

The proof of Theorem~\ref{thm:HRD-main} is assembled below from several lemmata which carry the bulk of the work. We first set up the needed conventions and definitions.

\begin{convention}
    We use $a_1,a_2$ as general coordinate functions on $\RR^2$ throughout the below; thus, for example, {\em the line $a_2=-a_1$} refers to the set $\{(a_1,a_2)\in \RR^2: a_2=-a_1\}$, and {\em above the line $a_2=-a_1$} refers to points $(a_1,a_2)\in \RR^2$ satisfying $a_2>-a_1$.
    
    We use {\em first quadrant}, {\em second quadrant}, and {\em fourth quadrant} in their conventional meanings on $\RR^2$, i.e., the first quadrant is the set $\{(a_1,a_2)\in \RR^2: a_1,a_2\geq 0\}$, the second quadrant is $\{(a_1,a_2)\in \RR^2: a_1\leq 0, a_2\geq 0\}$, and the fourth quadrant is $\{(a_1,a_2)\in \RR^2: a_1\geq 0, a_2\leq 0\}$. 

    We endow $\RR^2$ with the product partial order induced by the usual total order on $\RR$. Thus $\bfa \leq \bfb$ means $a_1\leq b_1$ and $a_2\leq b_2$, where $\bfa=(a_1,a_2)$ and $\bfb=(b_1,b_2)$.
\end{convention}

\begin{definition}\label{def:weight}
Given a point $\bfa=(a_1,a_2)$, we write
\[
\wt(\bfa) := a_1+a_2
\]
and call this the {\em weight} of $\bfa$.
\end{definition}

\begin{remark}\label{rem:wteqlnorm}
    The terminology reflects the fact that $\wt(\bfa)$ is the weight of the scalar action of $\kk^\times$ on $\bfx^\bfa:=\exp(\bfa)=x_1^{a_1}x_2^{a_2}$, i.e., for $\xi\in \kk^\times$ and $v\in V=\kk^2$, viewing $\bfx^\bfa$ as a polynomial function on $V$ we have $\bfx^\bfa(\xi v) = \xi^{\wt(\bfa)} \bfx^\bfa(v)$. We will use below that $\wt:\ZZ^2\rightarrow\ZZ$ is additive, and that $\wt(\bfa)=\|\bfa\|_1$ if $\bfa$ is in the first quadrant.
\end{remark}

\begin{lemma}\label{lem:bite-lemma}
    Let $\bfa$ be a point of $L$ with $\wt(\bfa)>0$. Let $\bfb\in \ZZ^2$ be an integer point satisfying $\bfb \geq \bfa$. Then there exists a point $\bfc\in \NN^2$ in the first quadrant such that 
    \[
    \bfc = \bfb \pmod L
    \]
    and
    \[
    \wt(\bfb) > \|\bfc\|_1.
    \]
\end{lemma}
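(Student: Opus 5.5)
One route would fix $\bfb$ and walk down from it by subtracting copies of $\bfa$ until the point lands in the first quadrant. Since $\bfa\in L$, every point $\bfb - k\bfa$ with $k\in\ZZ$ is congruent to $\bfb$ mod $L$, and its weight is $\wt(\bfb)-k\,\wt(\bfa)$. The idea is to choose $k\geq 1$ so that $\bfb-k\bfa$ is in the first quadrant; then $\|\bfb-k\bfa\|_1=\wt(\bfb-k\bfa)=\wt(\bfb)-k\wt(\bfa)<\wt(\bfb)$ by Remark~\ref{rem:wteqlnorm}. In general a single translate need not lie in the first quadrant, though, so I will instead set up an induction and use both the inequality $\bfb\geq\bfa$ and positivity of weight.

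Here is the plan. I will induct on $\wt(\bfb)$, which is at least $\wt(\bfa)\geq 1$ because $\bfb\geq\bfa$.

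Case 1: $\bfb\in\NN^2$ and $\bfb\neq\bfa$. Take $\bfc:=\bfb-\bfa$. It lies in $\NN^2$ because $\bfb\geq\bfa$, it is congruent to $\bfb$ mod $L$, and $\|\bfc\|_1=\wt(\bfb)-\wt(\bfa)<\wt(\bfb)$.

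Case 2: $\bfb\in\NN^2$ and $\bfb=\bfa$. Take $\bfc:=0$, which lies in the same coset, and $0<\wt(\bfb)$.

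Case 3: $\bfb\notin\NN^2$. Then one coordinate is negative, say $b_2<0$ (the other case is symmetric). Since $a_2\leq b_2<0$ and $a_1+a_2>0$, we get $a_1\geq 1$, and in fact $a_1>-a_2\geq -b_2$. Consider $\bfb':=\bfb-\bfa=(b_1-a_1,\,b_2-a_2)$. Its second coordinate is $\geq 0$, but its first may be negative, so $\bfb'$ may fail to satisfy $\bfb'\geq\bfa$ and the induction does not apply directly.

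I think the cleaner fix is a different step. I will replace $\bfb$ by $\bfb+\bfa$ when $\bfb$ has a negative coordinate. Wait—that raises the weight, so that is wrong. Instead, the key observation I will use is this: if $b_2<0$, then the point $\bfb'':=(b_1,0)$ would be a natural target, but it is not congruent to $\bfb$. So I return to subtraction, but subtract in the other direction: use $\bfb+\bfa$? No.

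Let me commit to the following. Write $\bfb=\bfa+\bfd$ with $\bfd\in\NN^2$. Then $\bfb\equiv\bfd \pmod L$, and $\wt(\bfd)=\wt(\bfb)-\wt(\bfa)<\wt(\bfb)$, with $\bfd$ in the first quadrant. So $\bfc:=\bfd=\bfb-\bfa$ works in every case, and Case 3 never actually arises as an obstruction. The point is that $\bfb\geq\bfa$ means exactly that $\bfb-\bfa\in\NN^2$, so I had misread Case 3: its first coordinate $b_1-a_1$ is automatically $\geq 0$. The whole proof is therefore the one-line choice $\bfc=\bfb-\bfa$, together with additivity of $\wt$ and $\wt=\|\cdot\|_1$ on $\NN^2$ from Remark~\ref{rem:wteqlnorm}. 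There is no real obstacle; the only care needed is checking that congruence mod $L$ uses $\bfa\in L$.
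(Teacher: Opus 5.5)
Your final argument is correct and is exactly the paper's proof: take $\bfc=\bfb-\bfa$, which lies in $\NN^2$ because $\bfb\geq\bfa$, is congruent to $\bfb$ mod $L$ because $\bfa\in L$, and satisfies $\|\bfc\|_1=\wt(\bfb)-\wt(\bfa)<\wt(\bfb)$. The induction and Case~3 worry (which came from misreading $\bfb\geq\bfa$) are unnecessary, so keep only the one-line argument in a write-up.
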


\begin{proof}
    The point $\bfb - \bfa$ is such a $\bfc$, as follows.

     Because $\wt(\bfa) > 0$ and $\wt$ is additive, $\bfb - \bfa$ satisfies the inequality $\wt(\bfb)>\wt(\bfb - \bfa)$. As their difference, $\bfa$, is a point of $L$, both $\bfb$ and $\bfb- \bfa$ belong to the same class mod $L$.  The point $\bfb - \bfa$ is in the first quadrant as $b_1 \geq a_1$ and $b_2 \geq a_2$ so $b_1  - a_1, b_2 - a_2 \geq 0$. Therefore $\|\bfb - \bfa\|_1=\wt(\bfb-\bfa)<\wt(\bfb)$, and we can conclude.
\end{proof}

\begin{definition}\label{def:blob}
    Let $B$ consist of a set of coset representatives for $L$ in $\ZZ^2$ such that each $\bfb$ of $B$ is of minimum $L^1$ norm in $(\bfb + L)\cap \NN^2$.
\end{definition}

\begin{observation}\label{obs:Dspan-B}
    By Lemma~\ref{lem:lattice-knows-all}, part~\ref{item:Dspan-from-lattice} we have
    \[
    \Dspan(L)=\max_{\bfb\in B}(\|\bfb\|_1).
    \]
\end{observation}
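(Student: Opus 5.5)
The plan is to unwind part~\ref{item:Dspan-from-lattice} of Lemma~\ref{lem:lattice-knows-all}, read through Convention~\ref{conv:b(L)-notation}. There $\Dspan(L)$ is the least $d\in\NN$ such that every coset of $L$ in $\ZZ^2$ contains a point of $\NN^2$ of $L^1$ norm at most $d$. I will prove the two inequalities between this $d$ and $\max_{\bfb\in B}\|\bfb\|_1$ separately.

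First I need $B$ to be well defined, i.e.\ every coset $\bfa+L$ must meet $\NN^2$. Since $[\ZZ^2:L]=n$ is finite, $n\bfe_1$ and $n\bfe_2$ lie in $L$. Adding large enough multiples of them to $\bfa$ moves it into $\NN^2$ without leaving its coset. The $L^1$ norm takes values in $\NN$ on $(\bfa+L)\cap\NN^2$, so this nonempty set has an element of minimum norm. Hence $B$ exists, and it is finite, with $n$ elements.

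For $\max_{\bfb\in B}\|\bfb\|_1 \geq \Dspan(L)$: the set $B$ contains one point of $\NN^2$ from each coset. So $d=\max_{\bfb\in B}\|\bfb\|_1$ satisfies the condition in Lemma~\ref{lem:lattice-knows-all}, and minimality of $\Dspan(L)$ gives the inequality.

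For $\max_{\bfb\in B}\|\bfb\|_1 \leq \Dspan(L)$: take $d=\Dspan(L)$ and any $\bfb\in B$. By the lemma, the coset $\bfb+L$ contains a point of $\NN^2$ of norm at most $d$. Since $\bfb$ has minimum norm in $(\bfb+L)\cap\NN^2$, we get $\|\bfb\|_1\le d$.

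I do not expect a real obstacle. The only point that needs a moment's care is that each coset meets $\NN^2$, which follows from $L$ having finite index.
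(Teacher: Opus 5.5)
Your proposal is correct and is the argument the paper intends: the paper justifies the observation only by citing Lemma~\ref{lem:lattice-knows-all}, part~\ref{item:Dspan-from-lattice}, and you have unwound that citation into the two inequalities. Your extra check that every coset meets $\NN^2$ (via $n\bfe_1, n\bfe_2 \in L$), which makes $B$ well defined, is a detail the paper leaves implicit in Definition~\ref{def:blob}.
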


\begin{lemma}\label{lem:blob-lemma}
     If $\bfa$ is a point of $L$ with $\wt(\bfa)>0$, and $\bfb \in B$, then $\bfa \nleq \bfb$.
\end{lemma}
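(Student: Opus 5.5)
The plan is a proof by contradiction that goes straight through Lemma~\ref{lem:bite-lemma}. Suppose $\bfa\in L$ has $\wt(\bfa)>0$, that $\bfb\in B$, and that $\bfa\leq\bfb$.

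First, note that $\bfb\in B\subseteq\NN^2$ is an integer point with $\bfb\geq\bfa$. Every hypothesis of Lemma~\ref{lem:bite-lemma} therefore holds, and it produces a first-quadrant point $\bfc\in\NN^2$ with $\bfc\equiv\bfb\pmod L$ and $\|\bfc\|_1<\wt(\bfb)$.

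Next, since $\bfb$ lies in the first quadrant, Remark~\ref{rem:wteqlnorm} gives $\wt(\bfb)=\|\bfb\|_1$. Hence $\bfc$ is a point of $(\bfb+L)\cap\NN^2$ with $\|\bfc\|_1<\|\bfb\|_1$. This contradicts Definition~\ref{def:blob}, which requires $\bfb$ to have minimum $L^1$ norm among nonnegative points of its coset. So $\bfa\nleq\bfb$.

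No step here is a real obstacle. The only points needing care are that $\bfb$ is an integer point lying in $\NN^2$, so that Lemma~\ref{lem:bite-lemma} applies and its weight equals its norm, and that $\bfc$ lies in the same coset as $\bfb$. Both are immediate from the definitions.
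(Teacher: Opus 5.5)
Your proposal is correct and matches the paper's proof: both apply Lemma~\ref{lem:bite-lemma} to get $\bfc\in\NN^2$ in the coset of $\bfb$ with $\|\bfc\|_1<\wt(\bfb)=\|\bfb\|_1$. Both then conclude by contradicting the minimality required in Definition~\ref{def:blob}.
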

\begin{proof}
   If $\bfb \geq \bfa$ then by Lemma~\ref{lem:bite-lemma} there exists some point $\bfc$ in $\NN^2$ such that $\wt(\bfb) > \|\bfc\|_1$ and  $\bfc = \bfb\pmod L$. Meanwhile, $\bfb\in B\subseteq \NN^2$, so $\wt(\bfb)=\|\bfb\|_1$, and thus $\|\bfb\|_1>\|\bfc\|_1$. This is a contradiction by the minimality requirement in the definition of $B$ (Definition~\ref{def:blob}). 
\end{proof}

\begin{lemma}\label{lem:staircase-lemma}
    Suppose $\bfa^1, \dots, \bfa^r$ are points of $L$ with $\wt(\bfa^j)>0$ for each $j=1,\dots,r$. Let 
    \[
    \widehat B := \{\bfu \in \NN^2: \bfu\ngeq \bfa^j\text{ for each }j=1,\dots,r\}.
    \]
    Then:
    \begin{enumerate}
        \item By reordering the $\bfa^j$'s and dropping some if necessary, we can assume that their first coordinates are strictly increasing and their second coordinates are strictly decreasing, without changing $\widehat B$.\label{cond:monotonic}
        \item If the conditions in statement~\ref{cond:monotonic} hold, then
        \begin{enumerate}
            \item $\widehat B$ is finite if and only if both $a^1_1\leq 0$ and $a^r_2\leq 0$, and\label{part:Bhat-finite}
            \item if $\widehat B$ is finite, then
            \[
            \Dspan(L) \leq \max_{j=2,\dots,r}(a^{j}_1+a^{j-1}_2-2).
            \]\label{part:staircase-corners}
        \end{enumerate}
    \end{enumerate}
\end{lemma}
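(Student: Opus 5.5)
Statement~\ref{cond:monotonic} is a standard reduction for monomial-type ideals in $\NN^2$. The set $\widehat B$ is the complement in $\NN^2$ of the upward closure $\bigcup_j(\bfa^j+\RR_{\geq 0}^2)$. If $\bfa^i\leq \bfa^j$ for some $i\neq j$, then the cone over $\bfa^j$ lies in the cone over $\bfa^i$, so dropping $\bfa^j$ does not change $\widehat B$. After removing all such dominated points, and one copy of each repeated point, no two remaining points are comparable. Two incomparable points with the same first coordinate are impossible, since they would be comparable. So we may sort the remaining points by first coordinate, which is then strictly increasing, and incomparability forces the second coordinates to be strictly decreasing.

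For \ref{part:Bhat-finite}, the points of $\widehat B$ on the $a_2$-axis are the $(0,t)$ that are not $\geq$ any $\bfa^j$. If $a^1_1\leq 0$, then $(0,t)\geq \bfa^1$ as soon as $t\geq a^1_2$, so only finitely many such points lie in $\widehat B$. Conversely, if $a^1_1>0$, then every $a^j_1>0$, and the entire ray $\{(0,t)\}$ lies in $\widehat B$. The symmetric argument with $a^r_2$ handles the $a_1$-axis. When both inequalities hold, any $\bfu\in\NN^2$ with $u_2\geq a^1_2$ dominates $\bfa^1$ and any $\bfu$ with $u_1\geq a^r_1$ dominates $\bfa^r$, so $\widehat B\subseteq [0,a^r_1-1]\times[0,a^1_2-1]$ is finite.

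For \ref{part:staircase-corners}, I would first invoke Lemma~\ref{lem:blob-lemma}: every $\bfb\in B$ satisfies $\bfb\ngeq \bfa^j$ for all $j$, so $B\subseteq\widehat B$. By Observation~\ref{obs:Dspan-B} it then suffices to bound $\|\bfu\|_1=u_1+u_2$ for $\bfu\in\widehat B$. Fix $\bfu\in\widehat B$. Let $j$ be the largest index with $a^j_1\leq u_1$; such $j$ exists because $a^1_1\leq 0\leq u_1$. Since $\bfu\ngeq\bfa^j$, we get $u_2\leq a^j_2-1$.
\begin{itemize}
\item If $j<r$, then $u_1\leq a^{j+1}_1-1$ by maximality of $j$, so $u_1+u_2\leq a^{j+1}_1+a^j_2-2$. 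This is one of the terms in the claimed maximum, with index $j+1\in\{2,\dots,r\}$.
\item If $j=r$, then $u_1\geq a^r_1$, and $\bfu\ngeq\bfa^r$ forces $u_2<a^r_2\leq 0$, which contradicts $\bfu\in\NN^2$.
\end{itemize}
So every point of $\widehat B$ lies below some staircase corner $(a^{j}_1-1,a^{j-1}_2-1)$, which gives the bound.

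I expect no step to be a serious obstacle. The only delicate point is the bookkeeping in \ref{part:staircase-corners}: $j=r$ must be excluded using $a^r_2\leq 0$, and the existence of $j$ comes from $a^1_1\leq 0$. Both are exactly the finiteness hypotheses of \ref{part:Bhat-finite}. The positivity of the weights is used only through Lemma~\ref{lem:blob-lemma}.
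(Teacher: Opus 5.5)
Your proof is correct and follows essentially the same route as the paper: prune dominated generators and sort them, detect finiteness on the coordinate axes, get $B\subseteq\widehat B$ from Lemma~\ref{lem:blob-lemma}, and bound norms on $\widehat B$ by the staircase corners $(a^j_1-1,a^{j-1}_2-1)$. The differences are cosmetic. The paper phrases parts~1 and 2(a) through the monomial ideal $I=M\cap\kk[x_1,x_2]$, and in 2(b) argues that a norm-maximizer must sit at a corner; your choice of the largest $j$ with $a^j_1\leq u_1$ instead bounds $\|\bfu\|_1$ directly for every $\bfu\in\widehat B$, without needing a maximizer to exist.
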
 

\begin{proof}
    Although the statements are about the lattice $L$, parts of the argument are most readily expressed in the language of monomials and Laurent monomials. Per Notation~\ref{not:LM}, let $\mathcal{LM}$ denote the group (under multiplication) of Laurent monomials in $x_1,x_2$, and per Notation~\ref{not:log-exp}, for $\bfa\in \ZZ^2$ let $\exp(\bfa):=x_1^{a_1}x_2^{a_2}\in \mathcal{LM}$ denote the Laurent monomial with exponent vector $\bfa=(a_1,a_2)$. The  canonical embedding of the polynomial algebra $\kk[x_1,x_2]$ into the group algebra $\kk[\mathcal{LM}]$ makes the latter a module over the former. Let
    \[
    M:=\langle \exp(\bfa^1),\dots,\exp(\bfa^r)\rangle_{\kk[x_1,x_2]}
    \]
    be the $\kk[x_1,x_2]$-submodule generated by the Laurent monomials with exponent vectors $\bfa^1,\dots,\bfa^r$. The situation is illustrated in Figure~\ref{fig:illustration-of-staircase-lemma}. Then 
    \[
    I:=M\cap \kk[x_1,x_2]
    \]
    is a monomial ideal in $\kk[x_1,x_2]$, and $\widehat B$ is precisely the set of exponent vectors of the monomials of $\kk[x_1,x_2]$ that do not belong to it. Thus the set of (canonical images of the) monomials $\exp(\widehat B)$ in $\kk[x_1,x_2]/I$ forms a $\kk$-basis for this ring.
    
    Passing to a minimal module generating set for $M$ and then lexicographically ordering it, we guarantee that $a^1_1<\dots<a^r_1$ and $a^1_2>\dots>a^r_2$ as in the conditions in statement~\ref{cond:monotonic}, without changing $M$ at all, and therefore without changing $I$ or $\widehat B$. We assume going forward that these conditions are met.

    To prove statement~\ref{part:Bhat-finite}, observe that because $a^1_1 \leq a^j_1$ for all $j$,  $a_1^1\leq 0$ if and only if some point $(0,a_2)$ satisfies $\bfa^1\leq (0,a_2)$, if and only if some power of $x_2$ is in $M$ and thus in $I$; likewise, because $a^r_2\leq a^j_2$ for all $j$, $a^r_2\leq 0$ if and only if some power of $x_1$ is in $M$ and thus in $I$. Therefore, the algebra $\kk[x_1,x_2]/I$ has finite $\kk$-dimension, or equivalently $\widehat B$ is a finite set, if and only if both $a_1^1\leq 0$ and $a^r_2\leq 0$.

    To prove statement~\ref{part:staircase-corners}, we first observe that the $B$ of Definition~\ref{def:blob} is contained in $\widehat B$, as follows. Let $\bfb \in B$. Because $\bfa^j$ is in $L$ with $\wt(\bfa^j) > 0$ for $j = 1, \dots, r$, the hypothesis on $\bfa$ in Lemma \ref{lem:blob-lemma} applies to each $\bfa^j$. Therefore, $\bfb \ngeq \bfa^j$ for each $j$. Also, $\bfb\in B \subseteq  \NN^2$. These are the conditions that define $\widehat B$, so $\bfb \in \widehat B$. Therefore, $B \subseteq \widehat B$. 

    Thus, 
    \[
    \Dspan(L)= \max_{\bfb\in B}(\|\bfb\|_1) \leq \max_{\bfu\in \widehat B}(\|\bfu\|_1),
    \]
    where the equality is Observation~\ref{obs:Dspan-B} and the inequality follows from $B\subseteq \widehat B$. Statement~\ref{part:staircase-corners} is then proven by establishing that, assuming finiteness of $\widehat B$, we must have
    \begin{equation}\label{eq:topdeg-I}
    \max_{\bfu\in \widehat B}(\|\bfu\|_1)\leq\max_{j=2,\dots,r}(a^{j}_1+a^{j-1}_2-2).
    \end{equation}
    This holds because the maximum $L^1$ norm in $\widehat B$ must be attained by one of the points
    \[
    (a^j_1-1,a^{j-1}_2-1),
    \]
    with norm $a^j_1+a^{j-1}_2-2$, for some $j=2,\dots,r$.\footnote{The inequality sign in \eqref{eq:topdeg-I} is necessary to hedge against the possibility that there exist some $j$'s for which $a^j_1-1$ or $a^{j-1}_2-1$  are negative.} This is probably most readily apprehended by looking at the {\em staircase diagram} for the ideal $I$, see \cite[Section~3.1]{miller2005combinatorial}, or for the module $M$, see Figure~\ref{fig:illustration-of-staircase-lemma}; but we include a proof for completeness.

    Let $\bfu=(u_1,u_2)\in\widehat B$. Finiteness of $\widehat B$ gives $a^1_1\leq 0$ and $a^r_2\leq 0$ by statement~\ref{part:Bhat-finite}.  Because $a^r_2\leq 0$, if also $u_1\geq a^r_1$ then $\bfu\geq \bfa^r$, contrary to the construction of $\widehat B$. Thus $u_1 < a^r_1$. Also, $a^1_1\leq 0 \leq u_1$, and so there is a unique $j=2,\dots,r$ with $u_1$ lying in the half-open interval $a^{j-1}_1 \leq u_1 < a^j_1$. Fix this $j$ going forward.
    
    If $u_1\neq a^j_1-1$, then $u_1+1$ also lies in the same interval, so it satisfies all the same inequalities in relation to the $a^j_1$'s that $u_1$ does. Therefore, $(u_1+1,u_2)$ is still in $\widehat B$, and $\bfu$ does not maximize the $L^1$ norm on $\widehat B$. 
    
    If $u_1 = a^j_1-1\geq a^{j-1}_1$, then first note that it is impossible for $u_2$ to be $\geq a^{j-1}_2$, as this would imply $\bfu \geq \bfa^{j-1}$, contrary to the construction of $\widehat B$. So $u_2 \leq a^{j-1}_2-1$. If the inequality is strict, then $u_2+1$ still satisfies $u_2+1<a^{j-1}_2$, and therefore $u_2+1 < a^\ell_2$ for any $\ell\leq j-1$. Meanwhile, obviously $u_1$ still satisfies $u_1<a^j_1$ and therefore $u_1<a^\ell_1$ for any $\ell \geq j$. Thus $(u_1,u_2+1)$ is not $\geq \bfa^\ell$ for any $\ell$, so it still in $\widehat B$, and $\bfu$ does not maximize the $L^1$-norm inside $\widehat B$. Thus $\bfu$ cannot be a maximizer of $\|\bfu\|_1$ on $\widehat B$ unless both $u_1=a^j_1-1$ and $u_2=a^{j-1}_2-1$. 
    
    Since $\widehat B$ is finite, there does exist a maximizer, and the inequality~\eqref{eq:topdeg-I} follows, completing the proof.  
\end{proof}

\begin{figure}
    \begin{center}
    \begin{tikzpicture}[scale=0.5]
    \draw[->] (-3,0) -- (9,0) node[right] {$a_1$};
    \draw[->] (0,-3) -- (0,8) node[above] {$a_2$};
    
    \draw[<-] (-2.5,8) -- (-2.5,5.5);
    \draw (-2.5,5.5) -- (4.5,5.5) -- (4.5,2.5) -- (6.5,2.5) -- (6.5,-2.5);
    \draw[->] (6.5,-2.5) -- (9,-2.5);
    
    \filldraw (-2,6) circle (2pt) node[above right] {$\mathbf{a}^1$};
    \filldraw (5,3) circle (2pt) node[above right] {$\mathbf{a}^2$};
    \filldraw (7,-2) circle (2pt) node[above right] {$\mathbf{a}^3$};
    
    \filldraw (4,5) circle (2pt);
    \filldraw (6,2) circle (2pt);
    
    \node at (2.5,3) {$\widehat{B}$};

    \node at (6,7) {$M$};
    
    \node[below left] at (0,0) {$\mathbf{0}$};
\end{tikzpicture}
    \end{center}
    \caption{Illustration of Lemma~\ref{lem:staircase-lemma}. The maximum $L^1$ norm in $\widehat B$ must be attained at one of the unlabeled points.}
    \label{fig:illustration-of-staircase-lemma}
\end{figure}
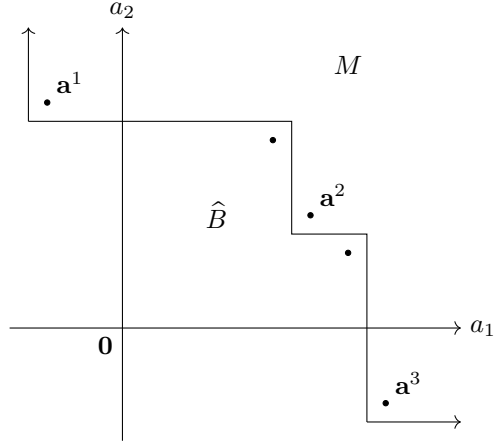

\begin{definition}
Let $\bfH=(H_1,H_2)$ be the point of $L$ with the lowest $2$nd coordinate among those in the second quadrant and above the line $a_2 = -a_1$.  If there are multiple such points, let it be the one with the least negative $1$st coordinate. 
\end{definition}

\begin{definition}
Let $\bfE=(E_1,E_2)$ be the point of $L$ with the lowest $1$st coordinate among those in the fourth quadrant and above the line $a_2 = -a_1$. If there are multiple such points, then among those, let it be the one with the least negative $2$nd coordinate.
\end{definition}

\begin{lemma}\label{lem:H-E-basis} Either $\bfE$ and $\bfH$ form a basis for $L$, or else the weights of $\bfH$ and $\bfE$ are equal and there exists at least one point of $L$ different from $\bfH$ or $\bfE$ in the first quadrant and on the line segment between $\bfH$ and $\bfE$. Except possibly for this segment, there are no nonzero points of $L$ in the triangle with vertices $\mathbf{0},\bfH,\bfE$. 
\end{lemma}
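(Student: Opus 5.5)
We argue in the plane using the index-$n$ sublattice $L\subseteq\ZZ^2$. First we check that $\bfH$ and $\bfE$ exist. $L$ contains $(n,0)$ and $(0,n)$, so the fourth-quadrant region above $a_2=-a_1$ contains lattice points, for instance $(n,0)$. The second-quadrant region above the line contains $(0,n)$. The minimization in each definition is over a set bounded below in the relevant coordinate: in the second quadrant $a_2>-a_1\geq 0$, and in the fourth quadrant $a_1>-a_2\geq 0$. So $\bfH$ and $\bfE$ exist, with $H_1\leq 0< H_2$ and $E_2\leq 0<E_1$.

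The core step is to show that the closed triangle $T$ with vertices $\mathbf 0,\bfH,\bfE$ contains no nonzero point of $L$ except $\bfH$, $\bfE$, and possibly points on the segment $[\bfH,\bfE]$. Suppose $\bfa\in L\cap T$ is nonzero and not on that segment. Then $\bfa$ lies strictly above the line $a_2=-a_1$, because $\wt$ is positive at $\bfH$ and $\bfE$, so every nonzero point of $T$ has positive weight. The points of $T$ in the second quadrant have $a_2\le H_2$; the points in the fourth quadrant have $a_1\le E_1$.
\begin{itemize}
\item If $\bfa$ is in the second quadrant and $a_2<H_2$, this contradicts the choice of $\bfH$.
\item If $a_2=H_2$ with $a_1$ less negative, this contradicts the tie-break. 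Such a point is in fact only possible on the edges of $T$, and on the edge $[\mathbf 0,\bfH]$ it would make $\bfH$ a proper multiple of a lattice point with smaller $a_2$.
\item If $\bfa$ is in the fourth quadrant, the symmetric argument with $\bfE$ applies.
\end{itemize}
The remaining case is $\bfa$ strictly in the open first quadrant. For this we use the differences $\bfa-\bfH$ and $\bfa-\bfE$. If $\bfa$ lies in $T$ but off the segment $[\bfH,\bfE]$, then $\bfa-\bfH$ or $\bfa-\bfE$ is a lattice point in the fourth or second quadrant above the antidiagonal that beats $\bfE$ or $\bfH$ respectively. We check this with the barycentric description $\bfa=s\bfH+t\bfE$ with $s,t\ge0$ and $s+t<1$. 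Writing $\bfa-\bfE=s\bfH+(t-1)\bfE$, its first coordinate is $sH_1+(t-1)E_1<0$. Its second coordinate is positive, and smaller than $H_2$ when $s<1$ (up to boundary cases handled by the tie-breaks). Its weight is $s\,\wt(\bfH)+(t-1)\wt(\bfE)$. This weight is positive whenever $s\,\wt(\bfH)>(1-t)\wt(\bfE)$; otherwise we use $\bfa-\bfH$ and get the symmetric condition. The two conditions cannot both fail when $s+t<1$. The sharp part of this bookkeeping, namely showing the two conditions are complementary, is the step we expect to be the main obstacle, and it needs care with boundary equalities and the tie-break rules.

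Given the empty-triangle statement, the basis claim follows from Pick's theorem, or the standard fact that a lattice triangle with no lattice points other than its vertices has area $\det(L)/2$. If the segment $[\bfH,\bfE]$ contains no further lattice points, then $T$ is empty except for its vertices. Hence $|\det(\bfH,\bfE)|=n$, and $\bfH,\bfE$ form a basis of $L$. Otherwise there is some $\bfc\in L$ strictly between $\bfH$ and $\bfE$. If $\bfc$ lies in the second quadrant, then $\bfc$ is above the antidiagonal (both endpoints are) and $c_2<H_2$ or ties it, which contradicts the choice of $\bfH$; the fourth quadrant is symmetric. So $\bfc$ lies in the first quadrant. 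Finally, $\bfc-\bfH$ and $\bfE-\bfc$ are parallel lattice vectors. If $\wt(\bfH)\ne\wt(\bfE)$, say $\wt(\bfH)<\wt(\bfE)$, then $\bfH+(\bfE-\bfc)$ or a similar translate lands in the second or fourth quadrant above the antidiagonal, closer than $\bfH$ or $\bfE$ respectively, a contradiction. Therefore the weights are equal.
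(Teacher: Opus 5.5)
Your overall plan is a legitimate route: show the closed triangle $T$ with vertices $\mathbf{0},\bfH,\bfE$ has no lattice points besides its vertices except possibly on $[\bfH,\bfE]$, then use the unimodular-triangle fact. The paper instead works in the half-open parallelogram spanned by $\bfH,\bfE$, and notes in a footnote that using the triangle would shorten the proof. But the step you flag as the main obstacle is not a bookkeeping issue; it fails as written.

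For $\bfa=s\bfH+t\bfE$ with $s,t\ge 0$ and $s+t<1$, you have $\wt(\bfa)=s\,\wt(\bfH)+t\,\wt(\bfE)<\max(\wt(\bfH),\wt(\bfE))$. So at least one of $\wt(\bfa-\bfE)$, $\wt(\bfa-\bfH)$ is always negative, and both are $\leq 0$ whenever $\wt(\bfa)\leq\min(\wt(\bfH),\wt(\bfE))$. When $\wt(\bfH)=\wt(\bfE)$ (exactly the case the lemma is about), both of your conditions reduce to $s+t>1$, so both fail on the whole region in question. Your assertion that they ``cannot both fail when $s+t<1$'' is false. Your claim that the second coordinate $sH_2+(1-t)|E_2|$ of $\bfa-\bfE$ is below $H_2$ is not automatic either.

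The differences you need point the other way. If $a_1,a_2>0$, then $\bfH-\bfa$ has first coordinate $<H_1\leq 0$ and second coordinate $<H_2$, so it contradicts the choice of $\bfH$ as soon as $\wt(\bfa)<\wt(\bfH)$. Symmetrically, $\bfE-\bfa$ contradicts the choice of $\bfE$ as soon as $\wt(\bfa)<\wt(\bfE)$. Every nonzero point of $T$ off the segment has weight $<\max(\wt(\bfH),\wt(\bfE))$, so one of these always applies. In the paper, differences of the form $\bfD-\bfH$, $\bfD-\bfE$ are used only for parallelogram points whose weight exceeds $\wt(\bfH)$ or $\wt(\bfE)$, and such points never lie in $T$.

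The same reversal affects your final step. If $\bfc=(1-\mu)\bfH+\mu\bfE$, then $\bfH+(\bfE-\bfc)=\mu\bfH+(1-\mu)\bfE$. This is the reflection of $\bfc$ through the midpoint of $[\bfH,\bfE]$, so it lies on the same open segment and, by your own argument, in the first quadrant, not the second or fourth. If $\wt(\bfH)<\wt(\bfE)$, the point to use is $\bfE-\bfc$: it has positive weight, first coordinate $<E_1$, and second coordinate $<E_2\leq 0$. Once the core step uses $\bfH-\bfa$ and $\bfE-\bfa$, it already covers lattice points on the open segment whenever the weights differ, since such points have weight $<\max(\wt(\bfH),\wt(\bfE))$. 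Equality of weights then follows directly. With these repairs your argument is complete and somewhat shorter than the paper's, because working in $T$ removes the need for the paper's preliminary reduction to the rectangle $\{\bfu:\mathbf{0}\leq\bfu\leq\bfH+\bfE\}$.
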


\begin{proof}
Suppose that $\bfH$ and $\bfE$ do not form a basis for $L$. Then there must exist some nonzero point $\bfD\in L$ different from $\bfE$ or $\bfH$ in the parallelogram $\{c_1\bfE+c_2\bfH:0\leq c_1,c_2<1\}$, as the latter forms a fundamental domain for $L$ in the plane.\footnote{In fact, there must exist such a $\bfD$ in the closed triangle with vertices $\mathbf 0, \bfH, \bfE$, and the present proof can be somewhat shortened by exploiting this; however, the present method reveals some symmetry obscured by this shortcut.}

We claim that any such $\bfD$ must lie in the first quadrant and must satisfy $\wt(\bfD)=\wt(\bfE)=\wt(\bfH)$. The lemma statement follows immediately from this. The argument is illustrated in Figure~\ref{fig:proof-idea-H-E-basis}, and proceeds in three steps.

If $\bfD=(D_1,D_2)$ satisfies $\wt(D)>0$, $D_1\leq 0$, and $D_2<H_2$, or $\wt(D)>0$, $D_2=H_2$, and $H_1< D_1 \leq 0$, then it contradicts the construction of $\bfH$. Let $\mathcal{H}$ be the region defined by these inequalities. Similarly, if $\bfD$ satisfies $\wt(\bfD)>0$, $D_2\leq 0$, and $D_1<E_1$, or $\wt(\bfD)>0$, $D_1=E_1$, and $E_2< D_2 \leq 0$, then it contradicts the construction of $\bfE$; let $\mathcal{E}$ be the region defined by these inequalities. The regions $\mathcal E$ and $\mathcal H$ are depicted by blue triangles in the leftmost image in Figure~\ref{fig:proof-idea-H-E-basis}, with dotted and solid lines indicating excluded and included parts of the boundaries. The pink regions in this image are the images of the blue regions under the transformation $\bfu \mapsto \bfH + \bfE - \bfu$; thus $\bfD$ cannot lie in them without $\bfH + \bfE - \bfD$ contradicting the construction of $\bfH$ or $\bfE$. The complement in the parallelogram $\{c_1\bfE+c_2\bfH:0\leq c_1,c_2<1\}$ of the set
\[
\mathcal H \cup \mathcal E \cup (\bfH + \bfE - \mathcal H) \cup (\bfH + \bfE - \mathcal E)
\]
is the interior of the rectagular region $\{\bfu\in \RR^2: \mathbf{0}\leq \bfu \leq \bfE + \bfH\}$. 
It follows that any nonzero $\bfD\in L$ in the parallelogram $\{c_1\bfE+c_2\bfH:0\leq c_1,c_2<1\}$ different from either $\bfE$ or $\bfH$ must in fact lie in this rectangle interior.

Next, the center image in Figure~\ref{fig:proof-idea-H-E-basis} depicts the regions $\mathcal E + \bfH$ and $\bfH - \mathcal H$ overlaid on the rectangular region $\{\bfu\in \RR^2: \mathbf{0}\leq \bfu \leq \bfE + \bfH\}$ (the latter is drawn with dotted boundary, as we now know $\bfD$ must lie in its interior). If $\bfD \in \mathcal E + \bfH$, then $\bfD - \bfH$ contradicts the construction of $\bfE$; if $\bfD \in \bfH - \mathcal H$, then $\bfH - \bfD$ contradicts the construction of $\bfH$. The complement in $\{\bfu\in \RR^2: \mathbf{0}\leq \bfu \leq \bfE + \bfH\}$ of
\[
(\mathcal E + \bfH) \cup (\bfH - \mathcal H)
\]
is the intersection of $\{\bfu\in \RR^2: \mathbf{0}\leq \bfu \leq \bfE + \bfH\}$ with the line $\{\bfu\in \RR^2: \wt(\bfu) = \wt(\bfH)\}$. Thus any nonzero $\bfD\in L$ in $\{c_1\bfE+c_2\bfH:0\leq c_1,c_2<1\}$ different from either $\bfE$ or $\bfH$ must in fact satisfy $\wt(\bfD)=\wt(\bfH)$ and lie in the first quadrant.

Finally, the rightmost image in Figure~\ref{fig:proof-idea-H-E-basis} depicts the regions $\mathcal H + \bfE$ and $\bfE - \mathcal E$, overlaid on the rectangular region $\{\bfu\in \RR^2: \mathbf{0}\leq \bfu \leq \bfE + \bfH\}$. If $\bfD\in L$ is in $\mathcal H + \bfE$, then $\bfD - \bfE$ contradicts the construction of $\bfH$; meanwhile if $\bfD \in L$ is in $\bfE - \mathcal E$, then $\bfE - \bfD$ contradicts the construction of $\bfE$. The complement of
\[
(\mathcal H + \bfE) \cup (\bfE - \mathcal E)
\]
in $\{\bfu\in \RR^2: \mathbf{0}\leq \bfu \leq \bfE + \bfH\}$ is the intersection of the latter with the line $\{\bfu\in \RR^2: \wt(\bfu) = \wt(\bfE)\}$. So any $\bfD\in L$ in $\{c_1\bfE+c_2\bfH:0\leq c_1,c_2<1\}$ different from either $\bfE$ or $\bfH$ must in fact satisfy $\wt(\bfD)=\wt(\bfE)$ and lie in the first quadrant. This completes the proof of the claim and thus the lemma.
\end{proof}

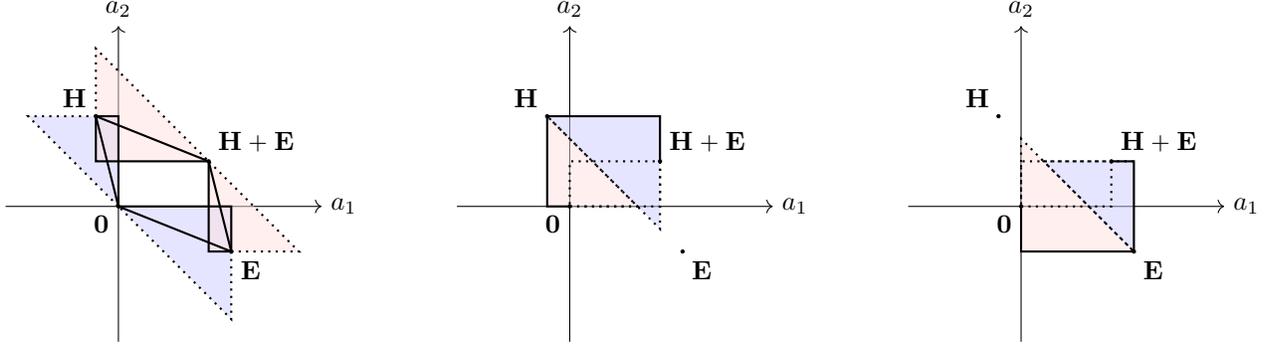
\begin{figure}
\begin{center}
\begin{tikzpicture}[scale=0.3]

\begin{scope}[xshift=0cm]
\draw[->] (-5,0) -- (9,0) node[right] {$a_1$};
\draw[->] (0,-6) -- (0,8) node[above] {$a_2$};

\fill[blue!20, opacity=0.5] (0,0) -- (5,0) -- (5,-5) -- cycle;

\fill[blue!20, opacity=0.5] (0,0) -- (0,4) -- (-4,4) -- cycle;

\fill[pink!50, opacity=0.5] (4,2) -- (-1,2) -- (-1,7) -- cycle;

\fill[pink!50, opacity=0.5] (4,2) -- (4,-2) -- (8,-2) -- cycle;

\draw[thick] (0,0) -- (5,0) -- (5,-2);
\draw[thick, dotted] (5,-2) -- (5,-5) -- (0,0);

\draw[thick] (0,0) -- (0,4) -- (-1,4);
\draw[thick, dotted] (-1,4) -- (-4,4) -- (0,0);

\draw[thick] (4,2) -- (-1,2) -- (-1,4);
\draw[thick, dotted] (-1,4) -- (-1,7) -- (4,2);

\draw[thick] (4,2) -- (4,-2) -- (5,-2);
\draw[thick, dotted] (5,-2) -- (8,-2) -- (4,2);

\draw[thick] (0,0) -- (5,-2) -- (4,2) -- (-1,4) -- cycle;

\filldraw[black] (0,0) circle (2pt) node[below left] {$\mathbf{0}$};
\filldraw[black] (5,-2) circle (2pt) node[below right] {$\mathbf{E}$};
\filldraw[black] (4,2) circle (2pt) node[above right] {$\mathbf{H+E}$};
\filldraw[black] (-1,4) circle (2pt) node[above left] {$\mathbf{H}$};
\end{scope}

\begin{scope}[xshift=20cm]
\draw[->] (-5,0) -- (9,0) node[right] {$a_1$};
\draw[->] (0,-6) -- (0,8) node[above] {$a_2$};

\fill[blue!20, opacity=0.5] (-1,4) -- (4,4) -- (4,-1) -- cycle;

\fill[pink!50, opacity=0.5] (-1,4) -- (-1,0) -- (3,0) -- cycle;

\draw[thick] (-1,4) -- (4,4) -- (4,2);
\draw[thick, dotted] (4,2) -- (4,-1) -- (-1,4);

\draw[thick] (-1,4) -- (-1,0) -- (0,0);
\draw[thick, dotted] (0,0) -- (3,0) -- (-1,4);

\draw[thick, dotted] (0,0) -- (4,0) -- (4,2) -- (0,2) -- cycle;

\filldraw[black] (0,0) circle (2pt) node[below left] {$\mathbf{0}$};
\filldraw[black] (5,-2) circle (2pt) node[below right] {$\mathbf{E}$};
\filldraw[black] (4,2) circle (2pt) node[above right] {$\mathbf{H+E}$};
\filldraw[black] (-1,4) circle (2pt) node[above left] {$\mathbf{H}$};
\end{scope}

\begin{scope}[xshift=40cm]
\draw[->] (-5,0) -- (9,0) node[right] {$a_1$};
\draw[->] (0,-6) -- (0,8) node[above] {$a_2$};

\fill[blue!20, opacity=0.5] (5,-2) -- (5,2) -- (1,2) -- cycle;

\fill[pink!50, opacity=0.5] (5,-2) -- (0,-2) -- (0,3) -- cycle;

\draw[thick] (5,-2) -- (5,2) -- (4,2);
\draw[thick, dotted] (5,2) -- (1,2) -- (5,-2);

\draw[thick] (5,-2) -- (0,-2) -- (0,0);
\draw[thick, dotted] (0,0) -- (0,3) -- (5,-2);

\draw[thick, dotted] (0,0) -- (4,0) -- (4,2) -- (0,2) -- cycle;

\filldraw[black] (0,0) circle (2pt) node[below left] {$\mathbf{0}$};
\filldraw[black] (5,-2) circle (2pt) node[below right] {$\mathbf{E}$};
\filldraw[black] (4,2) circle (2pt) node[above right] {$\mathbf{H+E}$};
\filldraw[black] (-1,4) circle (2pt) node[above left] {$\mathbf{H}$};
\end{scope}

\end{tikzpicture}
\end{center}
\caption{Illustration of the proof of Lemma~\ref{lem:H-E-basis}. Left: Any $\bfD\in L$ that lies in the blue triangles $\mathcal H$ or $\mathcal E$ (including the solid parts of their boundaries) contradicts the construction of $\bfH$ or $\bfE$. If there is a $\bfD\in L$ in the pink triangles $\bfH + \bfE - \mathcal E$ or $\bfH + \bfE - \mathcal H$, then $\bfH + \bfE -\bfD$ contradicts the construction of $\bfE$ or $\bfH$. Thus, any $\bfD\in L$ in the parallelogram $\{c_1\bfE+c_2\bfH:0\leq c_1,c_2<1\}$ must in fact lie in the interior of the rectangle $\{\bfu:\mathbf{0} \leq \bfu \leq \bfH + \bfE\}$. Middle: If there is a $\bfD\in L$ in the blue triangle $\mathcal E + \bfH$, then $\bfD - \bfH$ contradicts the construction of $\bfE$. If there is a $\bfD\in L$ in the pink triangle $\bfH - \mathcal H$, then $\bfH - \bfD$ contradicts the construction of $\bfH$. These regions cover the interior of the rectangle $\{\bfu:\mathbf{0}\leq \bfu \leq \bfH + \bfE\}$ except for the line $\{\bfu:\wt(\bfu) = \wt(\bfH)\}$. Right: If there is a $\bfD\in L$ in the blue triangle $\mathcal H + \bfE$, then $\bfD-\bfE$ contradicts the construction of $\bfH$. If there is a $\bfD\in L$ in the pink triangle $\bfE - \mathcal E$, then $\bfE - \bfD$ contradicts the construction of $\bfE$. These regions cover the interior of the rectangle $\{\bfu:\mathbf{0}\leq \bfu \leq \bfH + \bfE\}$ except for the line $\{\bfu:\wt(\bfu) = \wt(\bfE)\}$. In these images, it happens that $\wt(\bfH)=\wt(\bfE)$.} 
\label{fig:proof-idea-H-E-basis}
\end{figure}

\begin{lemma}\label{lem:if-basis}
    If $\bfH$ and $\bfE$ form a basis for $L$, then 
        \[
        \Dspanl \leq \max(E_1+ E_2 + H_2 -2,E_1 + H_1+ H_2 -2)) \leq n/2.
        \]
\end{lemma}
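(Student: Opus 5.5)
The plan is to obtain the first inequality from Lemma~\ref{lem:staircase-lemma}, applied to a three-point staircase built from $\bfH$ and $\bfE$, and the second from the determinant identity coming from the basis hypothesis. Throughout, I take the hypotheses of Theorem~\ref{thm:HRD-main} to be in force: by Observation~\ref{obs:points-from-triv-and-identical-2}, $L$ then contains none of $\bfe_1$, $\bfe_2$, $\bfe_1-\bfe_2$.

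\emph{Sign information.} By construction $H_1\le 0<H_2$ with $\wt(\bfH)>0$, and $E_2\le 0<E_1$ with $\wt(\bfE)>0$. Hence $\bfH+\bfE\in L$ also has positive weight. In the notation of Lemma~\ref{lem:staircase-lemma}, take
\[
\bfa^1=\bfH,\quad \bfa^2=\bfH+\bfE,\quad \bfa^3=\bfE.
\]
\begin{itemize}
\item If $H_1<0$ and $E_2<0$, the first coordinates $H_1<H_1+E_1<E_1$ strictly increase and the second coordinates $H_2>H_2+E_2>E_2$ strictly decrease.
\item Otherwise the middle point coincides coordinatewise in one slot with an endpoint. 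Drop it (this does not change $\widehat B$) and use $r=2$; the resulting bound $E_1+H_2-2$ is dominated by the claimed maximum, since $H_1=0$ or $E_2=0$ in that case.
\end{itemize}
Finiteness of $\widehat B$ holds by part~\ref{part:Bhat-finite}, because $a^1_1=H_1\le 0$ and $a^r_2=E_2\le 0$. Part~\ref{part:staircase-corners} then gives
\[
\Dspanl\le\max(H_1+E_1+H_2-2,\;E_1+H_2+E_2-2),
\]
which is the first inequality.

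\emph{Second inequality.} Since $\bfH,\bfE$ form a basis and $[\ZZ^2:L]=n$, we have $|\det(\bfE,\bfH)|=n$. Write $x=H_2$, $y=E_1$, $h=-H_1\ge 0$, $e=-E_2\ge 0$. The signs give
\[
n = xy+he,\qquad x>h,\qquad y>e.
\]
The target $E_1+H_2+E_2-2\le n/2$ becomes $2x+2y-2e-4\le xy+he$, and the other target becomes $2x+2y-2h-4\le xy+he$. Both follow from $(x-2)(y-2)\ge 0$, i.e. $xy\ge 2x+2y-4$, together with $e,h\ge 0$. So it suffices to show $x\ge 2$ and $y\ge 2$.

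\emph{Main obstacle: $x,y\ge 2$.} This is exactly where the excluded cases of Theorem~\ref{thm:HRD-main} are used. If $H_2=1$, then $0\le h<1$ forces $h=0$, so $\bfH=(0,1)=\bfe_2\in L$, which is excluded. Symmetrically, $E_1=1$ forces $\bfE=\bfe_1\in L$, also excluded. Thus $x,y\ge 2$, and the argument is complete. I will state this use of the hypotheses explicitly, since the lemma as written does not repeat them.
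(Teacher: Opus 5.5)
Your first inequality is fine. The three-point staircase $\bfH,\bfH+\bfE,\bfE$ is exactly the paper's, and you handle the degenerate cases $H_1=0$ or $E_2=0$ more carefully than the paper does. The second inequality, however, rests on a sign error. With $\bfE=(y,-e)$ and $\bfH=(-h,x)$,
\[
n=E_1H_2-E_2H_1=xy-(-e)(-h)=xy-he,
\]
not $xy+he$: the product of the two nonpositive numbers $E_2,H_1$ is subtracted. With the correct identity, the target $E_1+E_2+H_2-2\le n/2$ is equivalent to $(x-2)(y-2)\ge e(h-2)$. The facts $(x-2)(y-2)\ge 0$ and $e,h\ge 0$ no longer give this.

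More seriously, the constraints you collected ($x,y\ge 2$, $0\le h<x$, $0\le e<y$) do not imply the conclusion at all. The values $x=10$, $h=9$, $y=3$, $e=2$ satisfy all of them and give $n=30-18=12$, yet $E_1+E_2+H_2-2=9>6$.

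The missing idea is to test the extremality of $\bfH$ and $\bfE$ against $\bfH+\bfE$. Suppose $E_1+H_1<0$. Then $\bfH+\bfE$ has negative first coordinate and positive weight, so it lies in the second quadrant above $a_2=-a_1$. Its second coordinate is $H_2+E_2\le H_2$, and in case of equality its first coordinate is $H_1+E_1>H_1$. Either way it would have been chosen instead of $\bfH$. Hence $y\ge h$, and symmetrically $x\ge e$. In the example above, $\bfH+\bfE=(-6,8)$ beats $\bfH=(-9,10)$.

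The paper multiplies $E_1+H_1\ge 0$ by $E_2\le 0$ and uses the determinant to get $E_1(E_2+H_2)\le n$. It then runs a convexity argument in $E_1$ on $2\le E_1\le n/2$, with a separate argument for $E_1\ge n/2$ based on $\wt(\bfH)\ge 1$.

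You can also repair your own parametrization directly once $y\ge h$ is available:
\begin{itemize}
\item if $h\le 2$, then $e(h-2)\le 0$ and you are done;
\item if $h\ge 3$ and $x\le y+1$, combine $e\le y-1$ with $h-2\le x-3$ to get $(x-2)(y-2)-e(h-2)\ge y-x+1\ge 0$;
\item if $h\ge 3$ and $x\ge y+2$, combine $e\le y-1$ with $h-2\le y-2$ to get $(x-2)(y-2)-e(h-2)\ge (y-2)(x-y-1)\ge 0$.
\end{itemize}
The other target follows by the symmetry $x\leftrightarrow y$, $h\leftrightarrow e$, using $x\ge e$.
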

\begin{proof}
The first inequality in the lemma statement follows from applying Lemma~\ref{lem:staircase-lemma} with $\bfa^1 = \bfH$, $\bfa^2 = \bfH + \bfE$, and $\bfa^3=\bfE$, noting that $a_1^1=H_1\leq 0$ and $a_2^3 = E_2 \leq 0$. The work is to prove the second inequality. It suffices to prove that
\begin{equation*}
E_1+E_2 + H_2 - 2 \leq n/2,
\end{equation*}
because we can then obtain $E_1+H_1+H_2-2\leq n/2$ by reversing the axes and the roles of $\bfE$ and $\bfH$ in the proof.

We have the following inequalities:

\begin{equation}\label{eq:E+H-doesn't-contradict-H}
    E_1 + H_1 \geq 0
\end{equation}
because if $E_1+H_1<0$ then $\bfE + \bfH$ would contradict the construction of $\bfH$;

\begin{equation}\label{eq:Weight-positive}
    H_1 + H_2 \geq 1\text{ and }E_1 + E_2 \geq 1
\end{equation}
because the points $\bfH$ and $\bfE$ are above the line $a_2 = -a_1$; and

\begin{equation}\label{eq:no-triv-coeffs}
    E_1 \geq 2 \text{ and } H_2 \geq 2
\end{equation}
because by Observation~\ref{obs:points-from-triv-and-identical-2}, $L$ does not contain $(1,0)$ or $(0,1)$. Because $\bfE$ and $\bfH$ form a positively-oriented basis for $L$, and $[\ZZ^2:L]=n$ by hypothesis, we also have 
\begin{equation}\label{eq:determinant}
    E_1H_2 - E_2H_1 = n.
\end{equation}

We make the following variable change:
\begin{align*}
    X &= E_1 \\
    Y &= E_1 + E_2 \\
    Z &= E_1 + E_2 + H_2.
\end{align*}
The goal becomes to prove that 
\[
Z-2\leq n/2.
\]

In this language, \eqref{eq:no-triv-coeffs} yields $X \geq 2$ and $Z \geq 2+Y$, and \eqref{eq:Weight-positive} yields $Y \geq 1$.  Multiplying \eqref{eq:E+H-doesn't-contradict-H} through by $E_2$ (reversing the inequality because $E_2\leq 0$ by construction of $\bfE$), combining with \eqref{eq:determinant}, and applying the variable change, we get $X(Z-X) - n \leq 0$ and therefore
\begin{equation}
        Z \leq \frac{n}{X} + X.
\end{equation}
Note that $n/X + X$ is a convex function of $X$, and it equals $n/2 + 2$ at $X = 2$ and $X=n/2$. Therefore, for  $2 \leq X \leq n/2$, the inequality $Z -2 \leq n/2$ holds. Since $X \geq 2$, the lemma is proven with the exception of the case where $X > n/2$. For aesthetic reasons we handle the latter case under the slightly weaker hypothesis that $X\geq n/2$, which we assume going forward.

Multiplying the left inequality in \eqref{eq:Weight-positive} by $E_2$ (which reverses it), combining with \eqref{eq:determinant}, and applying the variable change, we get $(Z-Y)Y \leq Y-X+n$ and therefore
\begin{equation}\label{eq:x>n/2-case}
    Z \leq 1 + Y + \frac{n-X}{Y} \leq 1 + Y + \frac{n/2}{Y},
\end{equation}
with the second inequality because $X \geq n/2$ in the present case. Additionally, combining the first inequality in \eqref{eq:x>n/2-case} with $2+Y \leq Z$, we get
\[
2+Y \leq 1 + Y + \frac{n-X}{Y},
\]
and therefore
\[
Y \leq n-X
\]
(because $Y$ is positive). Therefore, $Y \leq n/2$. Since $Y \geq 1$, and the right side of \eqref{eq:x>n/2-case} is a convex function of $Y$ that equals $n/2 + 2$ at $Y = 1$ and $Y = n/2$, from \eqref{eq:x>n/2-case} we have $Z \leq  n/2 +2$. Therefore, $Z-2 \leq n/2$ when $X \geq n/2$ as well, and the lemma is proven. 
\end{proof}

The following elementary inequality is used to handle the case that $\bfH, \bfE$ do not form a basis for $L$. 

\begin{lemma}\label{lem:alphabeta}
Let $\alpha$ and $\beta$ be real numbers $\geq 2$. Then $\alpha + \beta -2 \leq \alpha \beta/2$.
\end{lemma}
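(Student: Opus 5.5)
The plan is to reduce the inequality to a product of two nonnegative quantities. I would rewrite the claim $\alpha+\beta-2\leq \alpha\beta/2$ as
\[
0\leq \tfrac12\alpha\beta-\alpha-\beta+2 = \tfrac12\bigl(\alpha\beta-2\alpha-2\beta+4\bigr) = \tfrac12(\alpha-2)(\beta-2).
\]
Checking this identity is a one-line expansion: $(\alpha-2)(\beta-2)=\alpha\beta-2\alpha-2\beta+4$.

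The hypothesis $\alpha,\beta\geq 2$ then gives $\alpha-2\geq 0$ and $\beta-2\geq 0$. Hence their product is nonnegative, and so is half of it. Reading the chain of equalities backwards yields $\alpha+\beta-2\leq\alpha\beta/2$, with equality exactly when $\alpha=2$ or $\beta=2$.

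There is no real obstacle here; the only step requiring any care is spotting the factorization. This matches the role the lemma will play: presumably $\alpha,\beta$ will be coordinates such as $E_1$ and $H_2$, which are $\geq 2$ by \eqref{eq:no-triv-coeffs}. Their product should then be compared with the index $n$ via a determinant, turning a staircase-corner bound from Lemma~\ref{lem:staircase-lemma} into a bound of the form $n/2$ in the non-basis case of Lemma~\ref{lem:H-E-basis}.
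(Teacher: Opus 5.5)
Your proof is correct and is the same as the paper's: the paper substitutes $\alpha=2+a$, $\beta=2+b$ and computes $\alpha\beta/2-(\alpha+\beta-2)=ab/2\geq 0$, which is exactly your factorization $\tfrac12(\alpha-2)(\beta-2)\geq 0$. Your closing guess about how the lemma is used is slightly off. The paper applies it with $\alpha=\wt(\bfa^{j-1})$ and $\beta=F$, where $F$ is the common step between consecutive lattice points on the segment from $\bfH$ to $\bfE$ and $\alpha\beta=n$, rather than with $E_1$ and $H_2$.
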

\begin{proof}
Let $\alpha = 2 + a$ and $\beta = 2 + b$ where $a,b \geq 0$. Then
\[
    \frac{\alpha\beta}{2} - (\alpha+\beta - 2) \geq \frac{(2+a)(2+b)}{2} - ((2+a)+(2+b)-2) =  \frac{ab}{2},
\]
which is clearly nonnegative.
\end{proof}

\begin{lemma}\label{lem:if-not-basis}
    When  $\bfE$ and $\bfH$  do not form a basis of $\bf L$, 
    the inequality $\Dspanl \leq n/2$ is still satisfied.
\end{lemma}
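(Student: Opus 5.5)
The plan is to reduce to the staircase bound of Lemma~\ref{lem:staircase-lemma}, applied to the lattice points on the segment from $\bfH$ to $\bfE$, and then close with Lemma~\ref{lem:alphabeta}.

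\textbf{Setting up the segment.} By Lemma~\ref{lem:H-E-basis}, since $\bfH,\bfE$ do not form a basis, we have $\wt(\bfH)=\wt(\bfE)=:w$. Moreover, the segment from $\bfH$ to $\bfE$ contains at least one further point of $L$, lying in the first quadrant. The points of $L$ on this segment are equally spaced, so I list them as $\bfH=\mathbf{P}_0,\mathbf{P}_1,\dots,\mathbf{P}_k=\bfE$ with $k\geq 2$. Since every $\mathbf{P}_j$ has weight $w$, the step $\bfv:=\mathbf{P}_{j}-\mathbf{P}_{j-1}$ has weight $0$. Hence $\bfv=(s,-s)$ for some positive integer $s$, and $\bfv\in L$.

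\textbf{Computing the index.} Next I show that $\{\mathbf{P}_0,\bfv\}$ is a basis of $L$. Lemma~\ref{lem:H-E-basis} says the triangle $\mathbf 0,\bfH,\bfE$ has no nonzero points of $L$ other than those on the segment. Therefore the triangle $\mathbf{0},\mathbf{P}_0,\mathbf{P}_1$ contains no points of $L$ other than its vertices, which gives the basis claim. Consequently
\[
n=|\det(\mathbf{P}_0,\bfv)|=|{-s}H_1-sH_2|=sw .
\]
Both factors are at least $2$:
\begin{itemize}
\item $s\geq 2$, because $s=1$ would put $(1,-1)\in L$, which is excluded by hypothesis via Observation~\ref{obs:points-from-triv-and-identical-2}.
\item $w\geq 2$, because some $\mathbf{P}_j$ lies in $\NN^2$ with weight $w$. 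If $w=1$, that point would be $(1,0)$ or $(0,1)$, which are again excluded.
\end{itemize}

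\textbf{Applying the staircase bound.} I apply Lemma~\ref{lem:staircase-lemma} with $\bfa^j=\mathbf{P}_{j-1}$ for $j=1,\dots,k+1$. Each has positive weight $w$, first coordinates strictly increasing and second coordinates strictly decreasing. Also $a^1_1=H_1\leq 0$ and $a^{k+1}_2=E_2\leq 0$, so $\widehat B$ is finite. Each corner quantity is
\[
a^j_1+a^{j-1}_2-2=\wt(\mathbf{P}_{j-2})+s-2=w+s-2 .
\]
So Lemma~\ref{lem:staircase-lemma} gives $\Dspanl\leq w+s-2$. Lemma~\ref{lem:alphabeta} with $\alpha=w$, $\beta=s$ then yields $w+s-2\leq ws/2=n/2$, as required.

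\textbf{Main obstacle.} The delicate step is justifying that $\{\mathbf{P}_0,\bfv\}$ is a basis, so that $n=sw$ exactly. This needs the emptiness statement of Lemma~\ref{lem:H-E-basis} to cover the closed sub-triangle $\mathbf 0,\mathbf{P}_0,\mathbf{P}_1$. That in turn uses the fact that the only lattice points on the segment between $\mathbf{P}_0$ and $\mathbf{P}_1$ are its endpoints, which holds because the $\mathbf{P}_j$ were chosen consecutive. Checking that the indexing in the staircase lemma produces exactly the corners $w+s-2$ is routine.
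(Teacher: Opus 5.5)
Your proof is correct and follows essentially the same route as the paper: apply the staircase lemma to the lattice points on the segment from $\bfH$ to $\bfE$, observe that every corner equals $w+s-2$, close with Lemma~\ref{lem:alphabeta}, and get $n=ws$ from a basis certified by the empty-triangle criterion. The only cosmetic difference is that you certify the basis $\{\bfH,\bfv\}$ via the triangle $\mathbf 0,\bfH,\mathbf{P}_1$. The paper instead uses consecutive segment points $\bfa^{j-1},\bfa^{j}$ with $\bfa^{j-1}$ in the first quadrant, which yields the same determinant.
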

\begin{proof}
Let $\bfa^1,\dots, \bfa^r$ be the set of points of $L$ on the line segment between $\bfa^1 = \bfH$ and $\bfa^r = \bfE$, ordered so that their 1st coordinates are increasing. By Lemma~\ref{lem:H-E-basis}, $\bfE$ and $\bfH$ have equal weight, thus $\wt(\bfH) = \wt(\bfa^j) = \wt(\bfE)$ for each $j$. From the definitions of $\bfE$ and $\bfH$, we know that all points $\bfa^j$ with $j=2,\dots, r-1$ will be in the first quadrant.

Because $\bfa^1=\bfH$ has a nonpositive first coordinate, and $\bfa^r=\bfE$ has a nonpositive second coordinate, and $\bfa^1,\dots,\bfa^r$ are ordered with increasing first and decreasing second coordinates, Lemma \ref{lem:staircase-lemma} gives us that 
\begin{equation}\label{eq:Dspan-from-staircase-corners}
        \Dspan(L) \leq \max_{j=2,\dots,r}(a^{j}_1+a^{j-1}_2-2).
\end{equation}
Because $L$ is a lattice of integer points, there exists a natural number $F$ such that $F = a_1^{j} - a_1^{j-1}$ and $-F=a_2^{j} - a_2^{j-1}$ for any $j = 2, \dots, r$. Therefore, 
\[
a^{j}_1+a^{j-1}_2-2 = a_1^{j-1}+F + a_2^{j-1} - 2 = \wt(\bfa^{j-1})+F - 2
\]
for any $j=2,\dots,r$. Since all $\bfa^j$ with $j=2,\dots, r$ have equal weight, the right side will have the same value for any $j$. Combining this with \eqref{eq:Dspan-from-staircase-corners}, we find that
\begin{equation}\label{eq:Dspan-from-a-single-corner}
\Dspan(L) \leq \wt(\bfa^{j-1})+F - 2
\end{equation}
for any $j=2,\dots,r$.

By Lemma \ref{lem:H-E-basis}, we know that $r\geq3$. Take some $j$ satisfying $3\leq j \leq r$. Then $2\leq j-1 \leq r-1$, therefore $\bfa^{j-1}$ is in the first quadrant. 
For this choice of $j$, we now verify the hypotheses of Lemma~\ref{lem:alphabeta} for $\alpha = \wt(\bfa^{j-1})$ and $\beta = F$, which we will then apply to the right side of \eqref{eq:Dspan-from-a-single-corner}.

First, $F \geq 2$ because if $F=1$ then $\bfa^j-\bfa^{j-1}=(1,-1)\in L$, but the point $(1,-1)$ cannot be contained in $L$ by Observation~\ref{obs:points-from-triv-and-identical-2}. Second, $\wt(\bfa^{j-1}) \geq 2$ because the only points in the first quadrant with weight $1$ are $(1,0)$ and $(0,1)$, but Observation~\ref{obs:points-from-triv-and-identical-2} implies that these points are not in $L$ either. Thus Lemma~\ref{lem:alphabeta} applies with $\alpha = \wt(\bfa^{j-1})$ and $\beta = F$. Combining this with \eqref{eq:Dspan-from-a-single-corner}, we get
\begin{equation}\label{eq:Dspan-nearly-there}
\Dspanl \leq \wt(\bfa^{j-1})F/2.
\end{equation}

Finally we claim that $\bfa^{j-1},\bfa^j$ form a basis of $L$. Indeed, by construction of $\bfa^{j-1},\bfa^j$ there are no points of $L$ on the segment between them, and by Lemma~\ref{lem:H-E-basis}, there are no nonzero points of $L$ anywhere else in the closed triangle with vertices $\mathbf{0}, \bfa^{j-1},\bfa^j$. It follows (e.g. by \cite[Chapter~1, Section~3, Theorem~4]{lekkerkerker-gruber}) that they are a basis.

The determinant of this basis equals the index of $L$:
\[n=(a^{j-1}_1 +F)a^{j-1}_2-a^{j-1}_1(a^{j-1}_2 -F) = \wt(\bfa^{j-1})F. \]
Therefore, \eqref{eq:Dspan-nearly-there} becomes the inequality
\[
\Dspanl \leq n/2
\]
as in the lemma statement.
\end{proof}

\begin{proof}[Proof of Theorem~\ref{thm:HRD-main}]
    Either $\bfH$ and $\bfE$ form a basis for $L$, or they do not; thus we obtain Theorem~\ref{thm:HRD-main} by combining Lemma~\ref{lem:if-basis} and Lemma~\ref{lem:if-not-basis}.
\end{proof}

\subsection{Proof of Theorem~\ref{thm:Dspan-main} from Theorem~\ref{thm:HRD-main}}\label{sec:pf-of-Dspan-main}

This section is devoted to proving Theorem~\ref{thm:Dspan-main}. The proof is a case analysis, and Theorem~\ref{thm:HRD-main} handles the hardest case. But we require two other lemmata for other cases.

The {\em dicyclic group of order $4n$}, $n$ an integer $\geq 2$, is the finite group abstractly presented by
\[
\langle \sigma,\tau : \sigma^{2n}=1,\; \sigma^n=\tau^2,\; \tau\sigma\tau^{-1}\rangle.
\]
It is denoted $Dic_{4n}$. A concrete realization is given by the following faithful representation over an algebraically closed field $\kk$ of characteristic not dividing $2n$:
\begin{equation}\label{eq:dicyclic}
\sigma\mapsto \begin{pmatrix}
    \zeta & 0 \\ 0 & \zeta^{-1}
\end{pmatrix},\quad
\tau\mapsto \begin{pmatrix}
    0 & -1 \\
    1 & 0
\end{pmatrix},
\end{equation}
where $\zeta\in \kk$ is a fixed primitive $2n$th root of unity. The center of $Dic_{4n}$ is generated by the involution $\sigma^n=\tau^2$; the quotient by the center is isomorphic to the dihedral group of order $2n$. The commutator subgroup of $Dic_{4n}$ is generated by $\sigma^2$; the abelianization is isomorphic to $C_2\times C_2$ or $C_4$, depending on whether $n$ is even or odd.

\begin{lemma}\label{lem:dicyclic}
    Let $G$ be the dicyclic group of order $4n$, $n>1$, and let $\kk$ be an algebraically closed field of characteristic not dividing $2n$. Let $V$ be the two-dimensional representation of $G$ over $\kk$ given by \eqref{eq:dicyclic}. Then $\Dspan(G,V)=n+1$.
\end{lemma}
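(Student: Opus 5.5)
The plan is to apply Lemma~\ref{lem:Dspan-via-counting-embeddings}: for each irreducible $V_\lambda$ of $Dic_{4n}$ I count how many $\kk(V)^G$-independent embeddings land in $\kk[V]_{\leq d}$, and I test independence with Observation~\ref{obs:matrix-test-of-li}.

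\emph{Irreducibles and monomials.} Over $\kk$ the irreducibles are four one-dimensional characters and the two-dimensional representations $W_k$, $1\leq k\leq n-1$, given by
\[
\sigma\mapsto\operatorname{diag}(\zeta^k,\zeta^{-k}),\qquad \tau\mapsto\begin{pmatrix}0&(-1)^k\\1&0\end{pmatrix}.
\]
Their dimensions account for $4+4(n-1)=4n$. With coordinates $x,y$, the monomial $x^ay^b$ is a $\sigma$-eigenvector with eigenvalue $\zeta^{\pm(a-b)}$; I fix one sign convention once and for all. The element $\tau$ sends $x^ay^b$ to $\pm x^by^a$. Hence:
\begin{itemize}
\item for $a\neq b$ and $a-b\not\equiv 0,n \pmod{2n}$, the span of $x^ay^b,x^by^a$ is a copy of $W_k$ with $k\equiv\pm(a-b)$;
\item the diagonal monomials $x^ay^a$ give the trivial character (via $1$) and a nontrivial character $\chi$ (via $xy$, which $\tau$ negates);
\item the span of $x^n,y^n$ splits into the remaining two characters.
\end{itemize}
So every one-dimensional character occurs in degree $\leq n$. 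For one-dimensional $\lambda$ we have $d_\lambda=1$, so these characters impose no constraint beyond degree $n$.

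\emph{Upper bound.} For $W_k$ I take two embeddings. The first is $\psi_1$ with image spanned by $x^k,y^k$, in degree $k$. The second is $\psi_2=xy\cdot\psi_1$, in degree $k+2$. This is legitimate because $W_k\otimes\chi\cong W_k$: the character of $W_k$ vanishes off $\langle\sigma\rangle$, and $\chi$ is trivial on $\sigma$, so the two characters agree everywhere. Concretely, $\psi_2$ sends the basis to $xy\cdot x^k$ and $-xy\cdot y^k$ (up to the fixed sign conventions).

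The step I expect to need the most care is independence. A naive exponent count suggests the two embeddings are dependent, since both involve $a-b=k$. The determinant in Observation~\ref{obs:matrix-test-of-li} is instead
\[
x^k\cdot(-xy^{k+1})-y^k\cdot(x^{k+1}y)=-2x^{k+1}y^{k+1}\neq 0,
\]
which I would double-check with the exact signs of $\tau$. The factor $2$ is nonzero because the characteristic does not divide $2n$. Thus $W_k$ receives two independent embeddings in degree $\leq k+2\leq n+1$, and $\Dspan\leq n+1$.

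\emph{Lower bound.} I show that $W_{n-1}$ has only one embedding into $\kk[V]_{\leq n}$, even over $\kk$, which rules out two $\kk(V)^G$-independent ones. Any image of the $\sigma$-eigenvector lies in the span of monomials $x^ay^b$ with $a-b\equiv\pm(n-1)\pmod{2n}$. Within $a+b\leq n$ we have $|a-b|\leq n$, which forces $a-b=\pm(n-1)$. The only such monomials are $x^{n-1}$, $y^{n-1}$, and $x^ny$ or $xy^n$, and the last two have degree $n+1$ and are excluded. So the $W_{n-1}$-isotypic part of $\kk[V]_{\leq n}$ is the single copy spanned by $x^{n-1},y^{n-1}$, and $\dim\Hom_{\kk G}(W_{n-1},\kk[V]_{\leq n})=1$. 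Lemma~\ref{lem:Dspan-via-counting-embeddings} then gives $\Dspan\geq n+1$. The degenerate case $n=2$, where $W_{n-1}=W_1$ and $n-1=1$, fits the same argument.
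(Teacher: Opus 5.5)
Your proposal is correct and follows the paper's proof essentially step for step. It uses the same criterion from Lemma~\ref{lem:Dspan-via-counting-embeddings}, the same two embeddings of each two-dimensional irreducible (into $\langle x^k,y^k\rangle$ and $\langle x^{k+1}y,xy^{k+1}\rangle$, certified independent by the determinant $-2x^{k+1}y^{k+1}$), and the same lower bound: the only monomial of degree $\leq n$ with $\sigma$-eigenvalue $\zeta^{n-1}$ is $x^{n-1}$, so $W_{n-1}$ embeds at most once in $\kk[V]_{\leq n}$. Your use of $W_k\otimes\chi\cong W_k$ is just a conceptual rephrasing of the explicit isomorphism $x^k\mapsto x^{k+1}y$, $y^k\mapsto -xy^{k+1}$ that the paper writes down.
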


\begin{proof}
    Let $x,y$ be coordinate functions on $V$, chosen so that $\sigma$ and $\tau$ act by $\sigma x= \zeta x$, $\sigma y = \zeta^{-1}y$, $\tau x = y$, $\tau y = -x$ (possible because $V$ is self-dual).

    Let $\Irr_\kk(G)$ be the set of isomorphism classes of irreducible representations of $G$ over $\kk$. For any $\lambda\in \Irr_\kk(G)$, let $W_\lambda$ be some representation that realizes the class $\lambda$, and let $d_\lambda$ be the dimension of $W_\lambda$ as a $\kk$-space. From Lemma~\ref{lem:Dspan-via-counting-embeddings}, $\Dspan$ is equal to the minimum degree $d$ such that, for each $\lambda\in \Irr_\kk(G)$, the $\kk$-space $\kk[V]_{\leq d}$ of polynomials of degree $\leq d$, viewed as a representation of $G$, contains $d_\lambda$ different embeddings of $W_\lambda$ that are linearly independent over $\kk(V)^G$.

    For $G$ the dicyclic group of order $4n$, there are $n+3$ classes in $\Irr_\kk(G)$, all of dimension 1 or 2:
    \begin{enumerate}
        \item The trivial representation.\label{item:trivial-rep}
        \item The one-dimensional representation defined by $\sigma \mapsto 1$, $\tau\mapsto -1$.\label{item:tau-sign-rep}
        \item $n-1$ two-dimensional representations, falling into two classes, as follows. Let $\zeta\in \kk$ be a primitive $2n$th root of unity.
        \begin{enumerate}
            \item For odd $j$ from $1$ to $n-1$, a faithful representation
            \[
            \sigma\mapsto \begin{pmatrix}
            \zeta^j & 0 \\ 
            0 & \zeta^{-j}
            \end{pmatrix},\quad
            \tau\mapsto \begin{pmatrix}
            0 & -1 \\
            1 & 0
            \end{pmatrix}.
            \]
        \label{item:dicyclic-2d-reps}
        \item For even $j$ from $1$ to $n-1$, a representation
            \[
            \sigma\mapsto \begin{pmatrix}
            \zeta^j & 0 \\ 
            0 & \zeta^{-j}
            \end{pmatrix},\quad
            \tau\mapsto \begin{pmatrix}
            0 & 1 \\
            1 & 0
            \end{pmatrix}
            \]
        that factors through the quotient of $G$ by its center, which is a dihedral group of order $2n$.\label{item:dihedral-2d-reps}
        \end{enumerate}
        \item There are two other one-dimensional representations, whose descriptions depend on the parity of $n$:
        \begin{enumerate}
        \item If $n$ is even, then there is a one-dimensional representation with kernel $\langle \sigma^2,\tau\rangle$ sending $\sigma\mapsto -1$, and another one with kernel $\langle \sigma^2, \sigma\tau \rangle$ sending $\sigma,\tau\mapsto -1$.\label{item:even-n-V}
        \item If $n$ is odd, then there are two one-dimensional representations with kernel $\langle \sigma^2\rangle$, sending $\tau \mapsto i,-i$, where $i$ is some primitive fourth root of unity in $\kk$.\label{item:odd-n-Z4}
        \end{enumerate}
    \end{enumerate}
    We need to check that a representation $W_\lambda$ of each of these types embeds $d_\lambda$-many $\kk(V)^G$-linearly independent times in $\kk[V]_{\leq n+1}$. (For the $d_\lambda = 1$ cases---items~\ref{item:trivial-rep}, \ref{item:tau-sign-rep}, \ref{item:even-n-V}, and \ref{item:odd-n-Z4} above---linear independence is automatically satisfied.) And we need to check that $n+1$ is the minimal $d$ so that this holds for $\kk[V]_{\leq d}$.

    We check each member of $\Irr_\kk(G)$ in turn. The trivial representation embeds in $\kk[V]_{\leq n+1}$ as $\kk[V]_0\cong \kk$. The one-dimensional representation $\sigma\mapsto 1$, $\tau\mapsto -1$ embeds in degree $2<n+1$ as the subspace $\langle xy\rangle_\kk$. The remaining two one-dimensional representations embed in the degree-$n$ subspace $\langle x^n,y^n\rangle_\kk$ whether $n$ is even or odd: if $n$ is odd, then the two representations described in item~\ref{item:odd-n-Z4} above embed as the subspaces $\langle x^n - iy^n\rangle_\kk$ and $\langle x^n + iy^n\rangle_\kk$, while if $n$ is even, the representations described in item~\ref{item:even-n-V} embed as $\langle x^n + y^n \rangle_\kk$ and $\langle x^n - y^n \rangle_\kk$. This covers the one-dimensional irreducibles.
    
    For any $j=1,\dots,n-1$, the  irreducible representation of $G$ indexed by $j$ in item~\ref{item:dicyclic-2d-reps} or \ref{item:dihedral-2d-reps} above can be embedded in $\kk[V]_{\leq n+1}$ as the subspaces $\langle x^j,y^j\rangle_\kk$ and $\langle x^{j+1}y, xy^{j+1}\rangle_\kk$, of degrees $j$ and $j+2$. These are isomorphic as $G$-representations via $x^j \mapsto x^{j+1}y,\; y^j\mapsto -xy^{j+1}$ (in both cases $j$ odd and $j$ even), and the matrix
    \[
    \begin{pmatrix}x^j & x^{j+1}y \\ y^j & -xy^{j+1}\end{pmatrix}
    \]
    is nonsingular (because the characteristic of $\kk$ is different from $2$); it follows by Observation~\ref{obs:matrix-test-of-li} that the corresponding embeddings are linearly independent over $\kk(V)^G$.

    Since we have considered all the classes in $\Irr_\kk(G)$, this analysis shows that $\Dspan \leq n+1$. This is in fact an equality because the representation with $j=n-1$ described in item~\ref{item:dicyclic-2d-reps} or \ref{item:dihedral-2d-reps} above (depending on the parity of $n$) does not embed below degree $n-1$, and does not embed a second time below degree $n+1$. One can see this using the lattice methods of  Section~\ref{sec:preliminaries} (and \ref{sec:bfieldr} and \ref{sec:pf-of-HRD-main}), as follows. The action of $\sigma$ on $\kk[V]$ is diagonal, and in the representation in question, $\sigma$ has eigenvalues $\zeta^{n-1},\zeta^{n+1}$. To find two copies of the representation will thus require two different monomials on which $\sigma$ acts with the eigenvalue $\zeta^{n-1}$. Such monomials correspond to lattice points $\bfa =(a_1,a_2)$ in the first quadrant satisfying the condition $a_1 - a_2 = n-1\pmod{2n}$. The two nonnegative lattice points of lowest $L^1$ norm satisfying this condition are $(n-1,0)$ and $(n,1)$, corresponding to monomials $x^{n-1}$ and $x^ny$ of degrees $n-1$ and $n+1$.
\end{proof}

\begin{lemma}\label{lem:powerset-embedding}
    Let $\Omega$ be a meet semilattice (in the order-theoretic sense), with minimal element $\widehat 0$, and meet operation $\wedge$. Let $S$ be a finite subset of $\Omega$ such that $\wedge S=\widehat 0$, and that is inclusion-minimal with respect to this property. Then the map
    \begin{align*}
        2^S\setminus \{\varnothing\} &\rightarrow \Omega\\
        S\supseteq J&\mapsto \wedge J
    \end{align*}
    from the poset $2^S\setminus\{\varnothing\}$ of nonempty subsets of $S$ (with containment order) to $\Omega$, which by construction is order-reversing, is also injective.
\end{lemma}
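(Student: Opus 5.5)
The plan is to prove injectivity directly, using minimality of $S$. Since order-reversal is immediate ($J\subseteq J'$ implies $\wedge J'\leq \wedge J$), only injectivity needs an argument. So suppose $J_1,J_2$ are distinct nonempty subsets of $S$ with $\wedge J_1=\wedge J_2$. We will derive a proper subset of $S$ whose meet is already $\widehat 0$, contradicting inclusion-minimality.

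The key observation is that equal meets can be merged: if $\wedge J_1=\wedge J_2=\omega$, then
\[
\wedge(J_1\cup J_2)=(\wedge J_1)\wedge(\wedge J_2)=\omega.
\]
Without loss of generality there is some $s\in J_1\setminus J_2$. Then $J_2\subseteq J_1\cup J_2$ both have meet $\omega$. Replacing $J_1$ by $J_1\cup J_2$, we may assume $J_2\subsetneq J_1$, $s\in J_1\setminus J_2$, and $\wedge J_2\leq s$. In other words, the element $s$ is redundant in the meet of $J_1$.

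Now pass to all of $S$. Because $\wedge J_2\leq s$ and $J_2\subseteq S\setminus\{s\}$, we get
\[
\wedge(S\setminus\{s\})\leq \wedge J_2\leq s.
\]
Therefore
\[
\wedge(S\setminus\{s\})=\wedge(S\setminus\{s\})\wedge s=\wedge S=\widehat 0.
\]
Note that $S\setminus\{s\}$ is nonempty, since it contains the nonempty set $J_2$. So its meet is defined in a meet semilattice without needing a top element. This contradicts the inclusion-minimality of $S$, and injectivity follows.

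The only delicate point is making sure every meet we write is of a nonempty finite set. This is why the reduction $J_1\mapsto J_1\cup J_2$ is used: it guarantees that the set $J_2$, which witnesses the redundancy of $s$, is nonempty and avoids $s$. Beyond that, the argument is a routine manipulation of associativity, commutativity and idempotence of $\wedge$, so I expect no real obstacle.
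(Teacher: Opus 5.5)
Your proof is correct and is essentially the paper's argument: the paper also uses that equal meets merge, $\wedge(J\cup J')=\wedge J$, and then discards redundant elements to get a proper subset with meet $\widehat 0$. The paper discards all of $J'\setminus J$ at once (the proper subset $J\cup U^c$ with $U=J\cup J'$), whereas you discard a single $s\in J_1\setminus J_2$, which avoids ever writing the possibly empty meet $\wedge U^c$; your preliminary replacement $J_1\mapsto J_1\cup J_2$ is harmless but unnecessary, since $s\in J_1$ already gives $\wedge J_2=\wedge J_1\le s$.
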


\begin{proof}
    Suppose, for a contradiction, that $\wedge J = \wedge J'$ for two distinct, nonempty subsets $J,J'\subseteq S$. Then $U:=J\cup J'$ properly contains at least one of $J,J'$; without loss of generality, say $J$. Meanwhile, $\wedge U = (\wedge J)\wedge (\wedge J') = \wedge J$. But then 
    \[
    \widehat 0 = \wedge S=(\wedge U) \wedge (\wedge U^c) = (\wedge J) \wedge (\wedge U^c) = \wedge (J\cup U^c),
    \]
    contradicting the minimality of $S$ because $U\supsetneq J$ implies that $J\cup U^c$ is a proper subset of $S$.
\end{proof}

We are finally ready to prove Theorem~\ref{thm:Dspan-main}.

\begin{proof}[Proof of Theorem~\ref{thm:Dspan-main}]
    First, $\Dspan$ is insensitive to base field extension, by \cite[Lemma~2.2]{bbs-derksen}. Meanwhile, the properties of the representation $V$ mentioned in the theorem statement (being faithful, and not a sum of scalar and trivial subrepresentations) are also unaffected by base field extension. Of course the characteristic of $\kk$ is also unaffected. Thus, we can assume without loss of generality that $\kk$ is algebraically closed.

    We now consider five cases: 
    \begin{enumerate}
        \item $G$ is not cyclic or dicyclic;\label{case:not-cyclic-or-dicyclic}
        \item $G$ is cyclic and $V$ has a faithful subrepresentation of dimension $\leq 2$;\label{case:cyclic-small-faithful}
        \item $G$ is dicyclic and $V$ has a faithful subrepresentation of dimension $\leq 2$;\label{case:dicyclic-small-faithful}
        \item $G$ is cyclic but $V$ does not have a faithful subrepresentation of dimension $\leq 2$;\label{case:cyclic-no-small-faithful}
        \item $G$ is dicyclic but $V$ does not have a faithful subrepresentation of dimension $\leq 2$.\label{case:dicyclic-no-small-faithful}
    \end{enumerate}
    Case~\ref{case:not-cyclic-or-dicyclic} follows from the same result for $\topdeg$, which is essentially known. Case~\ref{case:cyclic-small-faithful} is proven via  Theorem~\ref{thm:HRD-main}. Case~\ref{case:dicyclic-small-faithful} is proven using Lemma~\ref{lem:dicyclic}. Cases~\ref{case:cyclic-no-small-faithful} and \ref{case:dicyclic-no-small-faithful} are then extracted by means of analyses that take advantage of Lemma~\ref{lem:powerset-embedding}.

    \textbf{Case~\ref{case:not-cyclic-or-dicyclic}: $G$ is not cyclic or dicyclic.}
    
    If $G$ is not cyclic or dicyclic, then the inequality $\Dspan(G,V)\leq |G|/2$ follows from the same bound on $\operatorname{topdeg}(G,V)$, which is essentially known, by the following considerations. Following established convention, let
    \[
    \topdeg(G):=\sup_V \topdeg(G,V)
    \]
    and
    \[
    \beta(G) := \sup_V \beta(G,V).
    \]
    By \cite[Theorem~1]{kohls2014top}, we have
    \[
    \topdeg(G) = \beta(G)-1.
    \]
    This and the following quoted results depend on the coprime hypothesis $\Char(\kk)\nmid |G|$. 
    
    By \cite[Theorem~1.1]{cziszter-domokos}, the classical Noether number $\beta(G)$ satisfies $\beta(G)<|G|/2$ (strict inequality) unless $G$ is isomorphic to $(\ZZ/3\ZZ)^2$, $(\ZZ/2\ZZ)^3$, the alternating group $A_4$,  the binary tetrahedral group $\tilde A_4$, or a group with a cyclic subgroup of index $2$. We have
    \[
    \beta((\ZZ/3\ZZ)^2) = 5,\; \beta((\ZZ/2\ZZ)^3) = 4
    \]
    by \cite[\S~2]{olson1969}, in view of the equality between $\beta(G)$ and the Davenport number of an abelian group \cite[Proposition~2.2]{schmid1991finite}, \cite[Section~1.4]{cziszter-domokos}. If $G=A_4$ then $\beta(G)=6$ by \cite[Theorem~3.4]{cziszter-domokos}, and if $G=\tilde A_4$ then $\beta(G)=12$ by \cite[Corollary~3.6]{cziszter-domokos}. If $G$ has a cyclic subgroup of order $2$, then
    \[
    \beta(G)=|G|/2+1
    \]
    unless $G$ is dicyclic, by \cite[Theorem~10.3]{cziszter2014noether}. Thus, combining all these cases, as long as $G$ is neither cyclic nor dicyclic we have
    \[
    \beta(G)\leq |G|/2 + 1, 
    \]
    whereupon
    \[
    \Dspan(G,V)\leq \operatorname{topdeg}(G)=\beta(G)-1\leq |G|/2
    \]
    by \cite[Theorem~1]{kohls2014top} and the fact that $\Dspan(G,V)\leq\operatorname{topdeg}(G,V)$.\footnote{If $G$ is dicylic, then the same line of reasoning gives $\Dspan(G,V)\leq \topdeg(G)=\beta(G)-1 = |G|/2 + 1$, which is almost the desired bound (but not quite).}

    \textbf{Case~\ref{case:cyclic-small-faithful}: $G=C_n$ is cyclic and $V$ contains a faithful subrepresentation of dimension $\leq 2$.}

    This is the case handled by Theorem~\ref{thm:HRD-main}, as follows.

    Since $\kk$ is algebraically closed and $\Char \kk$ does not divide $|G|$, $V$ splits into one-dimensional subrepresentations of $G=C_n$, and the hypothesis on $V$ in the statement of Theorem~\ref{thm:Dspan-main} implies that it contains at least two distinct nontrivial such one-dimensional subrepresentations. We can find a {\em faithful} two-dimensional subrepresentation $W$ that shares this property: either the faithful subrepresentation of dimension $\leq 2$ which exists by the hypothesis of the present case---call it $\tilde W$---is already exactly two-dimensional and already contains two distinct one-dimensional subrepresentations, and can thus be taken as $W$, or else $\tilde W$ is scalar or scalar $\oplus$ trivial, in which case it contains a faithful one-dimensional subrepresentation, to which a distinct one-dimensional representation somewhere else in $V$ (which exists by the hypothesis on $V$ in the statement of Theorem~\ref{thm:Dspan-main}) can be added to produce $W$.
    
    Now $W$ is a summand of $V$, and therefore a homomorphic image of $V$. Then it follows from \cite[Theorem~4.4]{bbs-derksen} that $\Dspan(G,V)$ is bounded above by $\Dspan(G,W)$. Therefore, the theorem follows from the special case that $V$ is itself a faithful two-dimensional representation whose one-dimensional summands are distinct and nontrivial.

    This case falls under the rubric of Lemma~\ref{lem:lattice-knows-all}, part~\ref{item:Dspan-from-lattice}: we can compute $\Dspan$ from the lattice $L(G,V)$. The index of $L(G,V)$ in $\ZZ^2$ is $n=|G|$, by Lemma~\ref{lem:index-is-|G|}. The fact that the summands of $V$ are distinct and nontrivial translates into $L(G,V)$ satisfying the hypothesis on $L$ in Theorem~\ref{thm:HRD-main}. So the conclusion of Theorem~\ref{thm:Dspan-main} follows from Theorem~\ref{thm:HRD-main} via Lemma~\ref{lem:lattice-knows-all} in this case.

    \textbf{Case~\ref{case:dicyclic-small-faithful}: $G=Dic_{4n}$ is dicyclic and $V$ contains a faithful subrepresentation of dimension $\leq 2$.}

    The hypothesized faithful subrepresentation of dimension $\leq 2$ must in fact be irreducible and of dimensison exactly $2$, as if it were either one-dimensional or reducible, it would factor through $G$'s abelianization, contradicting faithfulness. As in Case~\ref{case:cyclic-small-faithful}, by means of \cite[Theorem~4.4]{bbs-derksen} we reduce to the case that $V$ is itself a faithful irreducible 2-dimensional representation. Then, up to an automorphism of $G$, $V$ is isomorphic to the representation given by \eqref{eq:dicyclic}, so by Lemma~\ref{lem:dicyclic} we have
    \[
    \Dspan(G,V) = n+1 < 2n = |G|/2.
    \]

    \textbf{Case~\ref{case:cyclic-no-small-faithful}: $G=C_n$ is cyclic, but $V$ does not contain a faithful subrepresentation of dimension $\leq 2$.}

    Let
    \[
    V=V_1\oplus \dots \oplus V_r
    \]
    be a decomposition into irreducible subrepresentations of $G=C_n$; because $\kk$ is algebraically closed, they are all one-dimensional. As in Case~\ref{case:cyclic-small-faithful}, by means of \cite[Theorem~4.4]{bbs-derksen} we can reduce to the case that $V$ is {\em minimal faithful}, in the sense that omitting any $V_j$ leads to a representation that is not faithful. The hypothesis of the present case then translates into the statement that $r\geq 3$.

    For each $j=1,\dots,r$, let $G_j$ be the image of $G$ in $GL(V_j)$. Let $a_1,\dots,a_r$ be the orders of the kernels of $V_1,\dots,V_r$ respectively, so that $|G_j|=n/a_j$.

    The lattice, in the order-theoretic sense, of subgroups of $C_n$, is isomorphic to the (order-theoretic) lattice of positive integer divisors of $n$ with meet operation $\gcd$, by sending a subgroup to its order. Thus, faithfulness of $V$ implies that $\gcd(a_1,\dots,a_r)=1$, and minimal faithfulness of $V$ (in the sense defined above) implies that the set $\{a_1,\dots,a_r\}$ is inclusion-minimal with respect to this property among sets of divisors of $n$. Thus, by Lemma~\ref{lem:powerset-embedding} we have an order-reversing injective map 
    \[
    J\mapsto \gcd(\{a_j\}_{j\in J})
    \]
    from nonempty subsets $J\subset \{a_1,\dots,a_r\}$ to divisors of $n$. Because the (order-theoretic) lattice of divisors of $n$ is ranked, with rank function the number of prime factors including multiplicity, and meanwhile each singleton $\{a_j\}$ sits at the bottom of a chain of length $r-1$ in the poset of nonempty subsets of $\{a_1,\dots,a_r\}$, it follows that each $a_j$ has at least $r-1$ prime factors, counting multiplicity. 
    
    Furthermore, each $a_j$ has at least two {\em distinct} prime factors, as follows: if (for a contradiction) $a_j=p^\ell$ is a prime power for some $j$, then because $\gcd(a_1,\dots,a_r)=1$ there must be some other $a_{j'}$ not divisible by $p$. But then $\gcd(a_j,a_{j'})=1$, contradicting the containment-minimality of $\{a_1,\dots,a_r\}$ with respect to having $\gcd=1$, because $r\geq 3$. 

    Combining these conclusions, we find that
    \[
    a_j \geq 2^{r-2}\cdot 3
    \]
    for each $j$. Thus,
    \[
    \Dspan(G,V) \leq \sum_{j=1}^r \Dspan(G_j,V_j) \leq \sum_{j=1}^r \left(\frac{n}{a_j} - 1 \right)< \sum_{j=1}^r \frac{n}{a_j} \leq \frac{rn}{2^{r-2}\cdot3}\leq \frac{|G|}{2},
    \]
    where the first inequality is by \cite[Proposition~4.6]{bbs-derksen}, the second is by \cite[Theorem~4.7]{bbs-derksen}, and the final inequality is because ($n=|G|$ and) $r/(2^{r-2}\cdot 3) \leq 1/ 2$ for all $r\geq 3$.

    \textbf{Case~\ref{case:dicyclic-no-small-faithful}: $G=Dic_{4n}$ is dicyclic, but $V$ does not contain a faithful subrepresentation of dimension $\leq 2$.}

    The method is similar to Case~\ref{case:cyclic-no-small-faithful}. Let
    \[
    V=V_1\oplus \dots \oplus V_r
    \]
    be a decomposition into irreducibles; for $j=1,\dots,r$ let $G_j$ be the image of $G$ in $GL(V_j)$ and let $K_j$ be the kernel of the map $G\rightarrow G_j$. By means of \cite[Theorem~4.4]{bbs-derksen} we reduce to the case that $V$ is minimal faithful; thus $\bigcap_j K_j=\{\mathrm{id.}\}$, and the set $\{K_1,\dots,K_r\}$ is containment-minimal among subsets of the (order-theoretic) lattice of normal subgroups of $G$ with respect to this property. Because each $V_j$ has dimension at most $2$, the hypothesis of the present case implies that $r\geq 2$. 

    The normal subgroups of $G=Dic_{4n}$ are as follows. For each divisor $a$ of $2n$ there is exactly one normal subgroup of order $a$, generated by $\sigma^{2n/a}$. If $n$ is even, then there are two additional index-2 normal subgroups $\langle \sigma^2,\tau\rangle$ and $\langle \sigma^2, \sigma\tau\rangle$. In the odd $n$ case, because all the normal subgroups are contained in the cyclic group $\langle \sigma\rangle$ of order $2n$, the (order-theoretic) lattice of normal subgroups is isomorphic to the poset of divisors of $2n$, and we will be able to proceed as in Case~\ref{case:cyclic-no-small-faithful} (carried out below). For even $n$, the minimality condition on the $K_j$'s implies that the two subgroups not contained in $\langle \sigma \rangle$ also cannot occur as $K_j$'s, as follows. If $n$ is even, then a nontrivial factor of $2n$ cannot be relatively prime with $n$. Therefore, the intersection of a collection of subgroups of $\langle \sigma \rangle$ of order $2n$, with the subgroup $\langle \sigma^2 \rangle$ of order $n$, cannot be trivial, unless the intersection was already trivial before additionally intersecting with $\langle \sigma^2 \rangle$. Therefore, if either $\langle \sigma^2,\tau\rangle$ or $\langle \sigma^2, \sigma\tau\rangle$ appears among a collection of normal subgroups having trivial intersection, it can be dropped from the collection without affecting the intersection. So in this case as well, each $K_j$ is a subgroup of $\langle \sigma \rangle$, and we can again proceed as in Case~\ref{case:cyclic-no-small-faithful}. 

    Let $a_j := |K_j|$. As in Case~\ref{case:cyclic-no-small-faithful}, we have $\gcd(a_1,\dots,a_r)=1$, and $\{a_1,\dots,a_r\}$ is containment-minimal among sets of divisors of $2n$ with respect to this property. By Lemma~\ref{lem:powerset-embedding}, the map $J\mapsto\gcd(\{a_j\}_{j\in J})$ is an order-reversing injection from the nonempty subsets of $\{a_1,\dots,a_r\}$ into the poset of divisors of $2n$. Therefore, each $a_j$ has height at least $r-1$ in the poset of divisors of $2n$, so $a_j\geq 2^{r-1}$. Also, the containment-minimality of $\{a_1,\dots,a_r\}$ with respect to $\gcd(a_1,\dots,a_r)=1$ implies it is an antichain in the poset of divisors of $2n$; the fact that $r\geq 2$ then implies that the poset of divisors of $2n$ contains at least 2 noncomparable elements, so $2n$ and therefore $n$ is divisible by some prime other than $2$. It also implies that no $a_j$ equals $2n$ itself; thus we have $a_j\leq n$ for all $j$.

    We now compute $\Dspan(G_j,V_j)$ for the individual representations $V_j$. 
    \begin{enumerate}
        \item If $a_j$ is odd, then $K_j$ does not contain $\sigma^n$. Assuming this:
        \begin{enumerate}
            \item If $a_j\neq n$ then $G/K_j\cong G_j$ is a dicyclic group of order $4n/a_j$, and $V_j$ is a faithful representation. Then $\Dspan(G_j,V_j)$ can be computed from Lemma~\ref{lem:dicyclic}, because \eqref{eq:dicyclic} with $n/a_j$ in the place of $n$ is the only faithful representation of $G_j$ up to automorphisms of $G_j$. We have 
            \[
            \Dspan(G_j,V_j)=\frac{n}{a_j}+1
            \]
            in this case.
            \item If $a_j=n$ then $G/K_j\cong G_j$ is cyclic of order $4$, and $V_j$ is a faithful one-dimensional representation, so
            \[
            \Dspan(G_j,V_j)=3
            \]
            in this case.
        \end{enumerate}
        \item If $a_j$ is even, then $K_j$ contains the central involution $\sigma^n$. In this case it is impossible for $a_j$ to equal $n$ because this would imply $K_j=\langle \sigma^2\rangle$, but this is not the kernel of an irreducible representation of $G$ when $n$ is even because $G/\langle \sigma^2\rangle \cong C_2\times C_2$ has no faithful irreducible representations. Thus $G/K_j\cong G_j$ is a dihedral group of order $4n/a_j$, and $V_j$ is, up to automorphisms of $G_j$, its reflection representation (which is its unique faithful irreducible representation up to automorphisms of $G_j$). Thus, 
        \[
        \Dspan(G_j,V_j)=2n/a_j
        \]
        by Example~\ref{ex:sharp-dihedral} (with $2n/a_j$ in the place of $n$).
    \end{enumerate}
    Since $a_j\leq n$, we have $n/a_j + 1 \leq 2n/a_j$, and also, from $a_j\geq 2^{r-1}$ above we have $2n/a_j \leq n/2^{r-2}$. Also, Lemma~\ref{lem:powerset-embedding} implies that the height of the poset of divisors of $2n$, i.e., the number of prime factors of $2n$ counting multiplicity, is at least $r$ (because it lies above all $a_j$, of which there are at least two, and they are noncomparable and each have height at least $r-1$), and therefore $n$ has at least $r-1$ prime factors counting multiplicity. Meanwhile, we know from above that one of them is at least 3. It follows that $3 \leq n/2^{r-2}$. Therefore, in all cases, we have 
    \[
    \Dspan(G_j,V_j)\leq n/2^{r-2}
    \]
    for any $j=1,\dots,r$. Combining everything, we get
    \[
    \Dspan(G,V) \leq \sum_{j=1}^r \Dspan(G_j,V_j) \leq \frac{rn}{2^{r-2}} \leq 2n=\frac{|G|}{2}
    \]
    with the first inequality by \cite[Proposition~4.6]{bbs-derksen}, the second by what has been done above, and the third because ($4n=|G|$ and) $r/2^{r-2}\leq 2$ for $r\geq 2$.
\end{proof}

\section*{Acknowledgements}

The authors thank Aniah Serrano for research conversations in the early stages of this project, John Landa for assistance in setting up computations, Victor Reiner for suggesting the study of $\Dspan$, and Peter Symonds for calling our attention to \cite{kohls2014top}. We are grateful to the Institute for Pure and Applied Mathematics (IPAM) at UCLA, and the Petersen Automotive Museum, for their hospitality in hosting research visits in May 2024. Computations supporting this investigation were done in Python and Java. Drafts of Figures~\ref{fig:illustration-of-staircase-lemma} and \ref{fig:proof-idea-H-E-basis} were vibe-coded in TikZ using Claude 4.5 Opus (i.e., a back-and-forth with Claude 4.5 Opus was used to generate TikZ code for these figures from natural-language descriptions of the desired images); details in the code were then manually adjusted to arrive at the final images.

\bibliographystyle{alpha}
\bibliography{biblio}

\end{document}